\newlength{\temp@wc@width}
\newlength{\temp@wc@height}
\newcommand{\widecheck}[1]{%
  \setlength{\temp@wc@width}{\widthof{$#1$}}%
  \setlength{\temp@wc@height}{\heightof{$#1$}}%
  #1\hspace{-\temp@wc@width}%
  \raisebox{\temp@wc@height+1pt}[\heightof{$\widehat{#1}$}]%
     {\rotatebox[origin=c]{180}{\vbox to 0pt{\hbox{$\widehat{\hphantom{#1}}$}}}}%
}
  \crefname{theorem}{Theorem}{Theorems}
  \crefname{lemma}{Lemma}{Lemmas}
  \crefname{remark}{Remark}{Remarks}
  \crefname{proposition}{Proposition}{Propositions}
  \crefname{definition}{Definition}{Definitions}
  \crefname{corollary}{Corollary}{Corollaries}
  \crefname{section}{Section}{Sections}
  \crefname{figure}{Figure}{Figures}
\newtheorem{theorem}{Theorem}[]
\newtheorem{proposition}[theorem]{Proposition}
\newtheorem{lemma}[theorem]{Lemma}
\newtheorem{corollary}[theorem]{Corollary}
\theoremstyle{definition}
\def\a{^{(\alpha)}}
\def\al{\alpha}
\def\lal{\lambda_\alpha}
\def\bd{{\bf d}}
\def\rd{{\mathrm d}}
\def\cc{{\mathcal C}}
\def\cw{{\mathcal W}}
\def\t{{\mathcal T}}
\def\v{{\mathcal V}}
\def\wcc{\widehat{\mathcal{C}}}
\def\wt{\widetilde}
\def\R{{\mathbb R}}
\def\P{{\mathbb P}}
\def\E{{\mathbb E}}
\def\N{{\mathbb N}}
\def\Z{{\mathbb Z}}
\def\build#1_#2^#3{\mathrel{\mathop{\kern 0pt#1}\limits_{#2}^{#3}}}
\def\rem{\noindent{\bf Remark. }}
\title{Typical behavior of the harmonic measure in critical Galton--Watson trees with infinite variance offspring distribution}
\author{Shen LIN 
\thanks{Supported in part by the grant ANR-14-CE25-0014 (ANR GRAAL)} \\
\small \it LPMA, Universit\'e Pierre et Marie Curie, Paris, France \\
\small \textit{E-mail}: \texttt{shen.lin.math@gmail.com} }
\date{}
\begin{document}

\maketitle

\begin{abstract}
We study the typical behavior of the harmonic measure in large critical Galton--Watson trees whose offspring distribution is in the domain of attraction of a stable distribution with index $\alpha\in (1,2]$. 
Let $\mu_n$ denote the hitting distribution of height $n$ by simple random walk on the critical Galton--Watson tree conditioned on non-extinction at generation~$n$. 
We extend the results of~\cite{LIN2} to prove that, with high probability, the mass of the harmonic measure $\mu_n$ carried by a random vertex uniformly chosen from height $n$ is approximately equal to $n^{-\lambda_\alpha}$, where the constant $\lambda_\alpha >\frac{1}{\alpha-1}$ depends only on the index $\alpha$. 
In the analogous continuous model, this constant $\lambda_\alpha$ turns out to be the typical local dimension of the continuous harmonic measure. 
Using an explicit formula for $\lambda_\alpha$, we are able to show that $\lambda_\alpha$ decreases with respect to $\alpha\in(1,2]$, and it goes to infinity at the same speed as $(\alpha-1)^{-2}$ when $\alpha$ approaches 1.

\medskip
\noindent {\bf Keywords.} size-biased Galton--Watson tree, reduced tree, harmonic measure, uniform measure, simple random walk and Brownian motion on trees.

\smallskip
\noindent{\bf AMS 2010 Classification Numbers.} 60J80, 60G50, 60K37. 
\end{abstract}

\section{Introduction}
In this paper, we continue our previous work~\cite{LIN2} by extending its results to the critical Galton--Watson trees whose offspring distribution may have infinite variance. 
To be more precise, let $\rho$ be a non-degenerate probability measure on $\Z_{+}$ with mean one, and we assume throughout this paper that $\rho$ is in the domain of attraction of a stable distribution of index $\alpha\in(1,2]$, which means that
\begin{equation}
\label{eq:stable-attraction}
\sum\limits_{k\geq 0}\rho(k)r^{k}= r+(1-r)^{\alpha}L(1-r)\qquad \mbox{ for any } r\in [0,1),
\end{equation} 
where the function $L(x)$ is slowly varying as $x \to 0^{+}$. The finite variance condition for $\rho$ is sufficient for the previous statement to hold with $\alpha=2$. When $\alpha\in (1,2)$, by classical results of~\cite[Chapters XIII and XVII]{F71}, the condition (\ref{eq:stable-attraction}) is satisfied if and only if the tail probability 
\begin{displaymath}
\sum\limits_{k\geq x} \rho(k)=\rho([x,+\infty))
\end{displaymath}
varies regularly with exponent $-\alpha$ as $x \to +\infty$. 
 
Under the probability measure $\P$, for every integer $n\geq 0$, we let $\mathsf{T}^{(n)}$ be a Galton--Watson tree with offspring distribution $\rho$, conditioned on non-extinction at generation $n$. Conditionally given the tree $\mathsf{T}^{(n)}$, we consider a simple random walk on $\mathsf{T}^{(n)}$ starting from the root. The probability distribution of its first hitting point of generation $n$ will be called the harmonic measure $\mu_{n}$, which is supported on the set $\mathsf{T}^{(n)}_{n}$ consisting of all vertices of $\mathsf{T}^{(n)}$ at generation~$n$. 
 
Let $q_{n}>0$ be the probability that a critical Galton--Watson tree $\mathsf{T}^{(0)}$ survives up to generation~$n$. It has been shown by Slack~\cite{S68} that, as $n\to \infty$, the probability $q_{n}$ decreases as $n^{-\frac{1}{\alpha-1}}$ up to multiplication by a slowly varying function, and $q_{n}\#\mathsf{T}^{(n)}_{n}$ converges in distribution to a non-trivial limit distribution on $\R_{+}$ that we will specify shortly. The main result of the present work generalizes Theorem 1 of~\cite{LIN2} from the finite variance case (with $\alpha=2$) to all $\alpha \in (1,2]$.

\begin{theorem}
\label{thm:dim-discrete}
Let $\Omega_n$ be a random vertex uniformly chosen from $\mathsf{T}^{(n)}_n$. If the offspring distribution $\rho$ has mean one and belongs to the domain of attraction of a stable distribution of index $\alpha\in (1,2]$, there exists a constant $\lal > \frac{1}{\alpha-1}$, which only depends on $\alpha$, such that for every $\delta>0$, we have 
\begin{equation}
\label{eq:dim-discrete} 
\lim_{n\to \infty}\P \Big( n^{-\lal-\delta} \leq \mu_n(\Omega_n) \leq n^{-\lal+\delta} \Big)= 1.
\end{equation}
\end{theorem}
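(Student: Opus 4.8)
\medskip
\noindent\textbf{Proof strategy.} The plan is to follow the scheme of~\cite{LIN2} (which handled $\alpha=2$), reworking every estimate for the $\alpha$-stable regime, in which the heavy tail of~$\rho$ produces branch points of unbounded degree along the ancestral line of~$\Omega_n$ and the relevant continuous object is the $\alpha$-stable L\'evy tree rather than the CRT. \emph{Step~1 (reduction to the reduced tree).} A subtree of $\mathsf{T}^{(n)}$ that does not reach generation~$n$ is a dead end for the simple random walk and does not affect its first hitting point of generation~$n$, while collapsing a chain of degree-two vertices into one edge of suitable conductance is a reversible operation for the electrical network; hence $\mu_n$ is unchanged if $\mathsf{T}^{(n)}$ is replaced by its \emph{reduced tree} $\mathsf{T}^{(n)}_*$, the subtree spanned by the root and $\mathsf{T}^{(n)}_n$ with unit conductances. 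The law of $\mathsf{T}^{(n)}_*$ is that of an explicit inhomogeneous Markov branching tree, the offspring law at generation~$k$ being read off from $q_{n-k},q_{n-k-1}$ and the generating function of~$\rho$; after dividing heights by~$n$ and sizes by~$q_n$, by~\cite{S68} and its refinements it converges to the reduced tree of the $\alpha$-stable L\'evy tree, whose self-similarity I use repeatedly.

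\emph{Step~2 (a product formula along the spine).} Let $\varnothing=v_0,v_1,\dots,v_n=\Omega_n$ be the ancestral line of~$\Omega_n$, and let $\cc_k$ be the effective conductance, inside $\mathsf{T}^{(n)}_*$, between $v_k$ and the generation-$n$ descendants of~$v_k$ (with the convention $\cc_n:=+\infty$). Conditioning the walk started from $v_k$ on reaching generation~$n$ below~$v_{k+1}$, decomposing into excursions at~$v_{k+1}$ and using the series--parallel laws yields $\mu_n(\Omega_n)=\prod_{k=0}^{n-1}\cc_{k+1}/\bigl((1+\cc_{k+1})\,\cc_k\bigr)$, which telescopes into
\begin{equation*}
 -\log\mu_n(\Omega_n)\;=\;\log\cc_0\;+\;\sum_{k=1}^{n-1}\log\bigl(1+\cc_k\bigr).
\end{equation*}
Choosing $\Omega_n$ uniformly in $\mathsf{T}^{(n)}_n$ corresponds to a Kesten-type spinal decomposition of $(\mathsf{T}^{(n)}_*,\Omega_n)$ which, in the bulk $\eta n\le k\le(1-\eta)n$ and as $n\to\infty$, converges to the spinal decomposition of the reduced $\alpha$-stable tree. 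A Nash--Williams cutset bound and the comparison with a single root-to-leaf path give $\log\cc_0=-\log n+o(\log n)$ with high probability; and since $\sum_k\cc_k^2=O(1)$, the sum above equals $\sum_{m=1}^{n-1}m^{-1}c^{(n)}_m+O(1)$, where $c^{(n)}_m:=m\,\cc_{n-m}$ renormalises to order one the conductance of the depth-$m$ subtree below~$v_{n-m}$.

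\emph{Step~3 (an ergodic theorem, and the main obstacle).} The heart of the argument is that, along the spine, the renormalised subtrees --- encoded by the renormalised conductance together with the renormalised number of generation-$n$ leaves --- form a Markov chain, the ``environment seen from the particle'' as it descends toward~$\Omega_n$; in the limit $n\to\infty$, reparametrised by $u=\log m$, this chain is stationary (by the self-similarity of the reduced $\alpha$-stable tree) and ergodic, with finite mean $\kappa_\alpha:=\E[c^{\infty}]$ for the stationary renormalised conductance. Granting this, Abel summation and Birkhoff's theorem give $\sum_{m=1}^{n-1}m^{-1}c^{(n)}_m=\kappa_\alpha\log n+o(\log n)$, whence $-\log\mu_n(\Omega_n)/\log n\to\lal:=\kappa_\alpha-1$ in probability; transferring this from the stationary model to $\mathsf{T}^{(n)}_*$ uses an absolute-continuity and coupling argument on the bulk of the spine, the two boundary layers contributing $o(\log n)$, while a second-moment estimate controls the fluctuations and yields~\eqref{eq:dim-discrete}. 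The main obstacle is precisely this step: identifying the right state space for the environment chain, proving uniqueness of its stationary law (by a coupling or Harris-type argument exploiting that the spine regenerates at branch points), and --- the genuinely delicate point when $\alpha<2$ --- establishing both the integrability $\E[c^{\infty}]<\infty$ and the concentration estimates, which rest on sharp tail bounds for the number of children along the spine and for subtree conductances; this is where the regular variation of~$\rho$ with index~$\alpha$ enters decisively.

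\emph{Step~4 (the strict inequality $\lal>\tfrac1{\alpha-1}$).} Writing $1/\#\mathsf{T}^{(n)}_n$ as a product over the branch points $v$ on the spine, $\prod_v\bigl(N_{i(v)}(v)/\sum_j N_j(v)\bigr)$ with $N_j(v)$ the number of generation-$n$ leaves in the $j$-th subtree below~$v$ and $i(v)$ the subtree containing the spine, and comparing with the product for $\mu_n(\Omega_n)$ --- at which the spine is followed at~$v$ with probability close to $\kappa_{i(v)}(v)/\sum_j\kappa_j(v)$, the $\kappa_j(v)$ being the renormalised subtree conductances --- one gets, up to $O(1)$, $-\log\mu_n(\Omega_n)-\log\#\mathsf{T}^{(n)}_n=\sum_v\log\bigl(p_{i(v)}(v)/r_{i(v)}(v)\bigr)$ over the branch points of the spine, where $p(v),r(v)$ denote the vectors of leaf- and conductance-proportions of the children of~$v$. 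Averaging over the uniform choice of~$\Omega_n$ turns this into $\sum_v\bigl(\#\{\text{leaves below }v\}/\#\mathsf{T}^{(n)}_n\bigr)\,D_{\mathrm{KL}}\bigl(p(v)\,\|\,r(v)\bigr)$ over \emph{all} branch points of $\mathsf{T}^{(n)}_*$, whose weights sum to order $\log n$; each divergence is nonnegative, and $\E\,D_{\mathrm{KL}}(p(v)\|r(v))>0$ at a typical branch point because for a genuinely random subtree the conductance is not proportional to the number of leaves. Since $\log\#\mathsf{T}^{(n)}_n=\tfrac1{\alpha-1}\log n+o(\log n)$ by~\cite{S68}, this yields $\lal>\tfrac1{\alpha-1}$, with equality only for the spherically symmetric reduced tree. (When a closed form for~$\kappa_\alpha$, hence for~$\lal$, is available, the inequality can also be read off from it.)
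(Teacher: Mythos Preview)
Your product formula in Step~2 is correct, and the overall architecture --- spinal (size-biased) decomposition followed by a law of large numbers for rescaled conductances along the spine --- is indeed the right one and matches the paper's strategy. Your Step~4 argument for the strict inequality is also essentially the paper's: the paper proves $\lambda_\alpha>\frac{1}{\alpha-1}$ in the continuous model by comparing the harmonic and uniform masses at the first branching point via concavity of the logarithm, which is your KL observation in disguise.

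However, Step~3 is not a proof but a wish list, and you acknowledge as much (``Granting this'', ``The main obstacle is precisely this step''). You do not construct the environment chain, do not identify its state space, do not prove the Markov property or stationarity in the $\log$-time parametrisation, and do not establish the integrability $\E[c^\infty]<\infty$ or the second-moment bound. These are the heart of the matter, not technicalities, and they are genuinely delicate when $\alpha<2$ because the size-biased offspring number along the spine has infinite mean.

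The paper fills this gap by a route different from the one you sketch. Instead of an abstract environment chain indexed by level, it introduces the \emph{backward} size-biased Galton--Watson tree $\widecheck{\mathsf{T}}$ (the spine runs to $-\infty$, with independent $\rho$-Galton--Watson bushes grafted along it), whose truncation at depth~$n$ has the law of your spinal tree. It then indexes not by level but by the successive depths $M_1<M_2<\cdots$ on the spine at which some grafted bush survives to generation~$0$; there are $k_n\sim\frac{\alpha}{\alpha-1}\log n$ of them. The target hitting probability is related to $p_{k_n}$, where $p_k$ satisfies an exact recursion $p_1=p_k\prod_{j=2}^k e^{Q_j}$ with each $Q_j$ an explicit function of the spacings $L_j=M_j-M_{j-1}$, the off-spine conductances $c_j,c_{j-1}$, and a subtree hitting probability $h_{j-1}$. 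The key lemma is then a direct $L^2$-law of large numbers $\frac{1}{k}\sum_{j=2}^k Q_j\to\frac{\alpha-1}{\alpha}\lambda_\alpha$, proved not via ergodicity but by (i) convergence in distribution of the quintuple $(\frac{L_{k+1}}{M_k},\frac{L_k}{M_{k-1}},M_kc_k,M_{k-1}c_{k-1},M_{k-1}h_{k-1})$ to an explicit limit built from $V_\alpha$, $\widehat N_\alpha$, copies of $\cc^{(\alpha)}$, and $\widehat\cc^{(\alpha)}$; (ii) identification of $\E[Q_\infty]$ with $\frac{\alpha-1}{\alpha}\lambda_\alpha$ using the recursive distributional equation for $\widehat\cc^{(\alpha)}$; and (iii) uniform moment bounds replacing the variance estimates of the $\alpha=2$ case, notably $\sup_k\E[(I_k)^r\mid I_k\ge1]<\infty$ for $r<\alpha-1$ (where $I_k$ is the number of surviving bushes at the $k$-th spine vertex) and $\sup_k\E[(M_kh_k)^r]<\infty$ for $r<\alpha$. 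These moment lemmas are exactly where the regular variation of~$\rho$ enters, and they are what your sketch is missing.
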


Roughly speaking, the previous theorem asserts that if we look at a typical vertex at level $n$ of the conditional Galton--Watson tree $\mathsf{T}^{(n)}$, then this random vertex carries with high probability a mass of order $n^{-\lal}$ given by the harmonic measure $\mu_n$. 
Since $\lal> \frac{1}{\alpha-1}$, we see particularly that the harmonic measure on the conditional Galton--Watson tree is not uniformly spread and exhibits a fractal behavior. 

The fractal nature of the harmonic measure in critical Galton--Watson tree has been initially investigated by Curien and Le Gall~\cite{CLG13} from another perspective.
To compare with Theorem~\ref{thm:dim-discrete}, their main result in the finite variance case has also been generalized to the present setting as Theorem 1.1 in~\cite{LIN1} : under the same assumption of Theorem~\ref{thm:dim-discrete}, we have the existence of another constant $\beta_\al \in(0,\frac{1}{\alpha-1})$ only depending on $\alpha$, such that for every $\delta>0$, we have the convergence in $\mathbb{P}$-probability
\begin{equation}
\mu_n\Big(\big\{v\in \mathsf{T}^{(n)}_n\colon n^{-\beta_{\alpha}-\delta}\leq \mu_n(v) \leq n^{-\beta_{\alpha}+\delta}\big\}\Big)  \xrightarrow[n\to\infty]{(\mathbb{P})} 1\,.  
\label{eq:theorem1-LIN1}
\end{equation}
In other words, with high probability the mass given by the harmonic measure $\mu_n$ to a random vertex at level $n$ drawn with respect to the same harmonic measure is of order $n^{-\beta_\al}$ with $\beta_\al<\frac{1}{\alpha-1}$. 
We can thus say that the harmonic measure $\mu_n$ is mainly supported on a subset of size approximately equal to $n^{\beta_\al}$, which is much smaller than the whole size of $\mathsf{T}^{(n)}_n$. 
Notice that the family $(\beta_\al,1<\al\leq 2)$ is shown in~\cite[Theorem 1.3]{LIN1} to be bounded from above in $\R_+$.

As the hitting distribution $\mu_{n}$ of generation $n$ by simple random walk on $\mathsf{T}^{(n)}$ is unaffected if we remove the branches of $\mathsf{T}^{(n)}$ that do not reach height $n$, we can simplify the model by considering merely simple random walk on the reduced tree $\mathsf{T}^{*n}$ associated with $\mathsf{T}^{(n)}$, which consists of all vertices of $\mathsf{T}^{(n)}$ that have at least one descendant at generation $n$. 

When the critical offspring distribution $\rho$ has infinite variance, the study of scaling limits of $\mathsf{T}^{*n}$ goes back to Vatutin~\cite{V77} and Yakymiv~\cite{Y80}. 
If we scale the graph distance by the factor $n^{-1}$, as $n\to \infty$ the discrete reduced tree $n^{-1}\mathsf{T}^{*n}$ converges to a random continuous tree $\Delta^{(\alpha)}$ that we now describe. 
For every $\alpha\in(1,2]$, we define the $\alpha$-offspring distribution $\theta_{\alpha}$ as follows. 
If $\alpha=2$, we let $\theta_{2}$ be the Dirac measure at~2. If $\alpha<2$, $\theta_{\alpha}$ is the probability measure on $\mathbb{Z}_{+}$ given by
\begin{eqnarray*}
\theta_{\alpha}(0) &= &\theta_{\alpha}(1)\; = \;0, \\
\theta_{\alpha}(k) &= &\frac{\alpha\, \Gamma(k-\alpha)}{k!\,\Gamma(2-\alpha)}\,=\,\frac{\alpha(2-\alpha)(3-\alpha)\cdots(k-1-\alpha)}{k!}\,, \quad \forall k\geq 2,
\end{eqnarray*}
where $\Gamma(\cdot)$ denotes the usual Gamma function of Euler. The mean of $\theta_{\alpha}$ is equal to $\frac{\al}{\al-1}$. 
To construct the random tree $\Delta\a$, we begin with the genealogical tree $\Delta_0\a$ of a family of particles that evolves in continuous time according to the following rules. 
At time 0 there is a single particle, this particle lives for a random time uniformly distributed over $[0,1]$, then dies and gives birth to a random number of new particles. 
This number of offspring is distributed according to~$\theta_\al$ and is assumed to be independent of the lifetime of the initial particle. 
Inductively, all the particles evolve independently of each other, and each new particle appeared at time $t\in (0,1)$ dies at a time uniformly distributed over $[t,1]$ and gives birth to an independent number of new particles according to the supercritical offspring distribution $\theta_\al$. 
After taking completion of $\Delta_0\a$ with respect to its natural intrinsic metric $\mathbf{d}$, we obtain a random compact rooted $\R$-tree $\Delta^{(\alpha)}$ that will be called the reduced stable tree of parameter $\alpha$.
Its boundary $\partial \Delta^{(\alpha)}$ is composed of all points of $\Delta\a$ at height 1.
We refer to Section~\ref{sec:treedelta} for a precise definition of $\Delta\a$. 

The description above makes clear the recursive structure of $\Delta\a$: 
let $U$ be a uniform random variable over $[0,1]$ and let $N_\al\in\N$ be a random variable of law $\theta_\al$.
We take $(\Delta\a_i)_{i\geq 1}$ to be a sequence of independent copies of $\Delta\a$, and we assume the independence between $U, N_\al$ and $\Delta\a_i, i\geq 1$.
If we attach to the top of a single line segment of length $U$ the roots of the rescaled trees $(1-U)\Delta\a_1, \ldots, (1-U)\Delta\a_{N_\al}$, the resulting tree rooted at the origin of the initial line segment will have the same distribution as $\Delta_\al$. 
See Fig.~\ref{fig:recur} for an illustration. 

\begin{figure}[!h]
 \begin{center}
 \includegraphics[width=9cm]{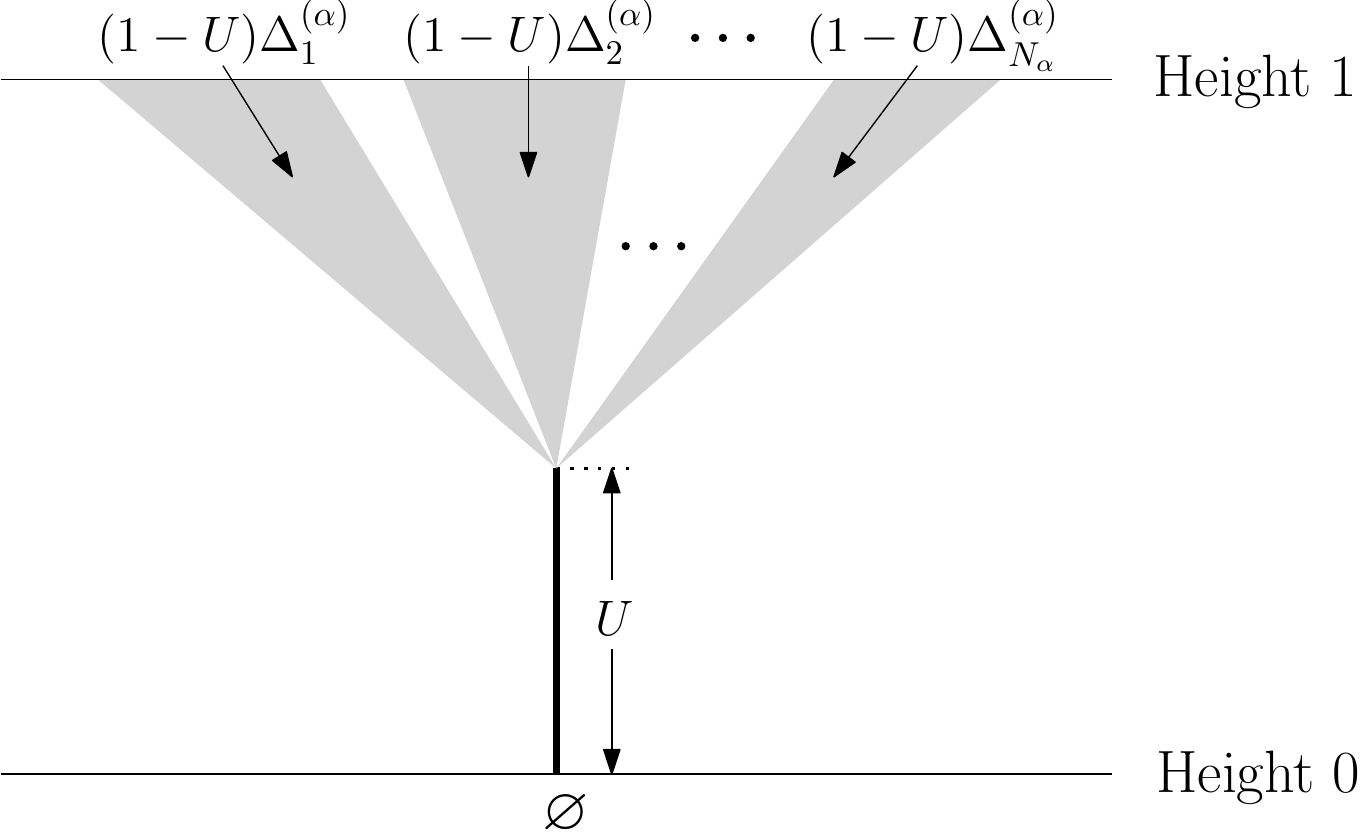}
 \caption{\label{fig:recur}Recursive structure of the reduced stable tree $\Delta\a$}
 \end{center}
\end{figure}

As the continuous analog of simple random walk, Brownian motion on $\Delta\a$ starting from the root can be defined up until its first hitting time of $\partial \Delta\a$. It behaves like linear Brownian motion as long as it stays inside a line segment of $\Delta\a$. It is reflected at the root of $\Delta\a$ and when it arrives at a branching point, it chooses one of the adjacent line segments with equal probabilities. 
We define the (continuous) harmonic measure $\mu_{\alpha}$ on $\Delta\a$ as the (quenched) distribution of the first hitting point of $\partial \Delta\a$ by Brownian motion.

The behavior of $\mu_{\alpha}$ is closely related to that of the discrete harmonic measure $\mu_n$. 
In order to state a result analogous with Theorem~\ref{thm:dim-discrete} in the continuous setting, we must first make sense of a ``typical'' point chosen from the boundary $\partial \Delta\a$. 
To this end, we introduce another (non-compact) random rooted $\R$-tree $\Gamma\a$ endowed with the same branching structure as $\Delta\a$, such that each point of $\Gamma\a$ at height $y\in [0,\infty)$ corresponds bijectively to a point of $\Delta_0\a$ at height $1-e^{-y}\in [0,1)$. 
It is easy to verify that the resulting new tree $\Gamma\a$ is a continuous-time Galton--Watson tree of branching rate 1 with the supercritical offspring distribution $\theta_\al$. In particular, $\Gamma^{(2)}$ is the Yule tree which describes the genealogy of the classical Yule process. 
By definition, the boundary $\partial \Gamma\a$ of $\Gamma\a$ is the set of all infinite geodesics in $\Gamma\a$ starting from the root (these will be called geodesic rays). 
Since $\Delta\a$ and $\Gamma\a$ share the same branching structure, both $\partial \Delta\a$ and $\partial \Gamma\a$ can be canonically identified with a common random subset of $\N^{\N}$. 

For every $r>0$, we write $\Gamma\a_r$ for the level set of $\Gamma\a$ at height $r$, and we know that $\E[\#\Gamma_r^{(\alpha)}]= \exp (\frac{r}{\alpha-1})$. 
By a martingale argument, one can define
\begin{displaymath}
\mathcal{W}\a \colonequals \lim_{r\to \infty} e^{-\frac{r}{\al-1}}\# \Gamma_r\a.
\end{displaymath}
As $\sum \theta_\al(k)k\log k <\infty$, the Kesten--Stigum theorem (for continuous-time Galton--Watson trees, see e.g.~\cite[Theorem III.7.2]{AN}) implies that the previous convergence holds in the $L^1$-sense, $\E[\cw\a]=1$, and $\cw\a>0$ almost surely. It follows from Theorem III.8.3 in~\cite{AN} that 
\begin{displaymath}
\E \Big[e^{-u\mathcal{W}\a}\Big] = 1-\frac{u}{(1+u^{\alpha-1})^{\frac{1}{\alpha-1}}} \quad \mbox{ for any } u\in (0,\infty).
\end{displaymath}
Moreover, according to Theorem 1 of~\cite{S68}, $q_{n}\#\mathsf{T}^{(n)}_{n}$ converges in distribution to this martingale limit $\mathcal{W}\a$ as $n\to \infty$.
For every $x\in \Gamma\a$, we let $H(x)$ denote the height of $x$ in $\Gamma\a$, and we write $\Gamma\a[x]$ for the tree of descendants of $x$ in $\Gamma\a$, viewed as an infinite random $\R$-tree rooted at $x$. For every $r>0$, we write $\Gamma\a_r[x]$ for the level set at height $r$ of the tree $\Gamma\a[x]$. If one thinks of $\Gamma\a[x]$ as a subtree of $\Gamma\a$, the set $\Gamma\a_r[x]$ consists of all the points of $\Gamma\a$ at height $r+H(x)$ that are descendants of $x$. We similarly define
\begin{displaymath}
\mathcal{W}\a[x] \colonequals \lim_{r\to \infty} e^{-\frac{r}{\al-1}}\# \Gamma\a_r[x],
\end{displaymath}
which has the same distribution as $\cw\a$. 
The uniform measure $\bar \omega_\al$ on $\partial \Gamma\a$ is defined as the unique probability measure on $\partial \Gamma\a$ satisfying that, for every $x\in \Gamma\a$ and for every geodesic ray $\mathbf{v}\in \partial \Gamma\a$ passing through $x$,
\begin{displaymath}
\bar \omega_\al(\mathcal{B}(\mathbf{v},H(x)))=  \exp \Big(-\frac{H(x)}{\al-1} \Big)\frac{\mathcal{W}\a[x]}{\mathcal{W}\a},
\end{displaymath}
where $\mathcal{B}(\mathbf{v},H(x))$ stands for the set of all geodesic rays in $\Gamma\a$ that coincide with $\mathbf{v}$ up to height $H(x)$. In existing literature, we also call $\bar \omega_\al$ the branching measure on the boundary of $\Gamma\a$. 
Since $\partial \Delta\a$ can be identified with $\partial \Gamma\a$ as explained above, we let $\omega_\al$ be the (random) probability measure on $\partial \Delta\a$ induced by $\bar \omega_\al$, which will be referred to as the uniform measure on $\partial \Delta\a$.
The following result is an extension of Theorem 2 in~\cite{LIN2}.

\begin{theorem}
\label{thm:dim-continu}
For every $\al\in (1,2]$, with the same constant $\lambda_\al$ as in Theorem~\ref{thm:dim-discrete}, we have $\P$-a.s.~$\omega_\al(\mathrm{d}\mathbf{v})$-a.e.
\begin{eqnarray}
\lim_{r\downarrow 0} \frac{\log \mu_\al (\mathcal{B}_{\bd}(\mathbf{v},r))}{\log r} &= &\lambda_\al \,, \label{eq:loc-dim-harm}\\
\lim_{r\downarrow 0} \frac{\log \omega_\al (\mathcal{B}_{\bd}(\mathbf{v},r))}{\log r} &=  & \frac{1}{\al-1} \,,\label{eq:loc-dim-unif}
\end{eqnarray}
where $\mathcal{B}_{\bd}(\mathbf{v},r)$ stands for the closed ball of radius $r$ centered at $\mathbf{v}$ in the metric space $(\Delta\a,\bd)$.
\end{theorem}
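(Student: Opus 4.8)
The plan is to transfer the local dimension statements from the discrete Theorem~\ref{thm:dim-discrete} to the continuous tree $\Delta\a$ by exploiting the recursive structure described above and a renewal/ergodic argument along a typical geodesic ray. First I would set up coordinates along a $\bar\omega_\al$-typical ray $\mathbf v \in \partial\Gamma\a$: if $x_0=\text{root}, x_1, x_2, \ldots$ denote the successive branching points of $\Gamma\a$ visited by $\mathbf v$, then conditionally on the tree the ray chosen according to $\bar\omega_\al$ has the well-known size-biased description, namely at each branching point with $k$ children one follows child $i$ with probability proportional to $\mathcal W\a[\text{child } i]$, and the successive subtrees hanging off the spine become, after the size-biasing, i.i.d.\ copies of the unconditioned object. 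Writing $y_j=H(x_j)$ for the heights in $\Gamma\a$, the increments $y_{j+1}-y_j$ form (after size-biasing) an i.i.d.\ sequence with finite mean, so by the law of large numbers $y_j/j \to c_\al$ for an explicit constant. The corresponding heights in $\Delta\a$ are $h_j = 1-e^{-y_j}$, so the ball $\mathcal B_{\bd}(\mathbf v, r)$ for small $r$ is squeezed between two spine points; controlling $\mu_\al(\mathcal B_{\bd}(\mathbf v,r))$ reduces to controlling the mass $\mu_\al$ assigns to the subtree of $\Delta\a$ above $x_j$, which I denote $Z_j \colonequals \mu_\al(\text{subtree above } x_j)$.

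The heart of the argument is a multiplicative decomposition $Z_{j+1} = Z_j \cdot R_{j+1}$, where $R_{j+1}\in(0,1)$ is the conditional probability that Brownian motion, started from $x_j$ and conditioned to exit through the subtree above $x_j$, actually exits through the subtree above $x_{j+1}$. Using the str.\ Markov property of Brownian motion at $x_{j+1}$ together with the recursive self-similarity of $\Delta\a$, one checks that under the size-biased law the factors $(R_j)_{j\ge1}$ form a stationary (in fact, after a further renewal decomposition, i.i.d.)\ ergodic sequence with $\E[\log R_1^{-1}] = \lambda_\al\,\E[y_1]$ — and this identity is precisely what \emph{defines} $\lambda_\al$ and links it to the discrete constant: one shows the discrete hitting-probability recursion of Theorem~\ref{thm:dim-discrete} converges, under $n^{-1}$ rescaling, to this continuous recursion (this is the content already imported from~\cite{LIN1,LIN2} concerning $n^{-1}\mathsf T^{*n}\to\Delta\a$ jointly with the random walk $\to$ Brownian motion). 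Then Birkhoff's ergodic theorem gives $-\tfrac1j\log Z_j \to \lambda_\al\, c_\al$ $\P$-a.s.\ $\bar\omega_\al$-a.e., and since $-\log r \sim y_j/c_\al \cdot$ (const) and $y_j \sim c_\al j$, dividing yields \eqref{eq:loc-dim-harm}. The companion statement \eqref{eq:loc-dim-unif} is the same computation with $Z_j$ replaced by $\bar\omega_\al(\mathcal B(\mathbf v,y_j)) = e^{-y_j/(\al-1)}\mathcal W\a[x_j]/\mathcal W\a$; here the ratio of martingale limits contributes nothing to the exponential rate (it is tight, bounded away from $0$ and $\infty$ along the spine by a further martingale/ergodic argument), so one reads off the rate $\tfrac{1}{\al-1}$ directly from the deterministic prefactor, giving the easier of the two displays.

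There remain two technical points that must be handled with care. One is the passage from ball radii $r$ to spine levels: since branches of $\Delta\a$ other than the one through $x_j$ also contribute points within distance $r$ of $\mathbf v$, I would use a sandwiching argument, bounding $\mu_\al(\mathcal B_{\bd}(\mathbf v, r))$ above by the mass of the subtree above the last spine point below level $1 - r$ and below by the mass of $\mathcal B$ restricted to that subtree, then show the two bounds have the same logarithmic rate — this uses that the overshoot $y_{j+1}-y_j$ has light enough tails (inherited from $\theta_\al$) that $\max_{k\le j}(y_{k+1}-y_k)=o(j)$ a.s. The other, and in my view the main obstacle, is justifying that the size-biased spine decomposition makes $(R_j)$ genuinely stationary ergodic with the claimed mean $\lambda_\al\,\E[y_1]$: the factor $R_{j+1}$ depends not only on the subtree above $x_{j+1}$ but, through the conductance seen from $x_j$ looking back toward the root, on the entire tree below $x_j$ as well, so one cannot immediately invoke independence. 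The resolution is to introduce the reciprocal-conductance (effective-resistance) process along the spine, show it satisfies its own autonomous recursion driven by the i.i.d.\ subtree data, and prove that this Markov chain on $(0,\infty)$ is positive recurrent with a unique stationary law — at which point $(R_j)$ becomes a stationary functional of an ergodic Markov chain and Birkhoff applies, with the value of the mean $\lambda_\al\,\E[y_1]$ pinned down by matching to the discrete recursion via the scaling limit. This effective-resistance analysis, together with the identification of $\lambda_\al$ as the Lyapunov exponent of that recursion, is where essentially all the work lies; once it is in place, both \eqref{eq:loc-dim-harm} and \eqref{eq:loc-dim-unif} follow from standard ergodic-theoretic bookkeeping.
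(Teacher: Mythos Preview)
Your overall architecture --- work on $\Gamma\a$ under the size-biased measure $\mathcal W\a(\mathcal T)\,\Theta_\al(\rd\mathcal T)\,\bar\omega\a_{\mathcal T}(\rd\mathbf v)$, apply Birkhoff's ergodic theorem to the additive functionals $\log\mu_{\mathcal T}$ and $\log\bar\omega\a_{\mathcal T}$ along the spine, then divide by the spine heights --- is exactly the paper's approach, and it works. But you have misidentified where the difficulty lies, and this leads you to propose machinery that is not needed and to a circular identification of~$\lambda_\al$.

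The concern you flag as the ``main obstacle'' --- that $R_{j+1}$ depends on the tree below $x_j$ through a backward conductance --- is not an obstacle at all. The flow property of the harmonic measure (Lemma~2.3 in~\cite{LIN1}) says precisely that the harmonic mass of the subtree through child $i$ at the first branching point equals $\mathcal C(\mathcal T_{(i)})\big/\sum_\ell \mathcal C(\mathcal T_{(\ell)})$, a ratio of \emph{forward} conductances only. Hence the log-increment $G_1(\mathcal T,\mathbf v)=\log\bigl(\mathcal C(\mathcal T_{(v_1)})/\sum_\ell \mathcal C(\mathcal T_{(\ell)})\bigr)$ is a function of the current pair $(\mathcal T,\mathbf v)$ alone, and $G_n=\sum_{i=0}^{n-1}G_1\circ S^i$ for the shift $S$ at the first branching point. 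Once one checks that $S$ preserves $\mathcal W\a\Theta_\al\bar\omega\a$ (this is Proposition~\ref{prop:unif-meas-inv}) and is ergodic (same argument as~\cite[Proposition~2.6]{LIN1}), Birkhoff applies immediately --- no effective-resistance Markov chain along the spine, no positive-recurrence argument.

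Two related corrections. First, you propose to \emph{identify} $\lambda_\al$ by matching to the discrete recursion via the scaling limit $n^{-1}\mathsf T^{*n}\to\Delta\a$. The paper does the opposite: it reads off the Birkhoff average directly as
\[
\lambda_\al=\E\Bigl[N_\al\,\mathcal W\a_1\log\frac{\mathcal C\a_1+\cdots+\mathcal C\a_{N_\al}}{\mathcal C\a_1}\Bigr],
\]
verifies by hand that $\frac{1}{\al-1}<\lambda_\al<\infty$ (finiteness via a bound of the type~\eqref{eq:logsum-finite}, the strict lower bound by a concavity-of-$\log$ argument comparing conductance ratios to $\mathcal W\a$-ratios), and only afterwards proves Theorem~\ref{thm:dim-discrete} using this continuous constant. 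Your route would make the logical dependence circular. Second, for~\eqref{eq:loc-dim-unif} you say the martingale-limit ratio ``contributes nothing'' because it is tight along the spine. The actual mechanism is cleaner: one writes $F_1=-\frac{z_\varnothing}{\al-1}+\log\mathcal W\a(\mathcal T_{(v_1)})-\log\mathcal W\a(\mathcal T)$ and observes that the last two terms have the \emph{same} expectation under the $S$-invariant measure, so they cancel in the Birkhoff limit; this is where one needs $\int\mathcal W\a|\log\mathcal W\a|\,\Theta_\al<\infty$, equivalently $\sum_k\theta_\al(k)\,k(\log k)^2<\infty$, which does hold for all $\al\in(1,2]$.
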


By Duquesne and Le Gall~\cite[Theorem 5.5]{DLG05}, the Hausdorff measure of $\partial \Delta\a$ with respect to $\mathbf{d}$ is $\P$-a.s.~equal to $\frac{1}{\al-1}$. According to Lemma 4.1 in~\cite{LPP95}, assertion~\eqref{eq:loc-dim-unif} of the preceding theorem implies that $\P$-a.s.~the uniform measure $\omega_\al$ has the same Hausdorff dimension as the whole boundary of the reduced stable tree. 
Meanwhile, taking account of \eqref{eq:loc-dim-harm}, we may interpret $\lambda_\al$ as the local dimension of the harmonic measure $\mu_\al$ at a typical point of the boundary $\partial \Delta\a$. 
In contrast, the Hausdorff dimension of the harmonic measure $\mu_\al$ is $\P$-a.s.~equal to the constant $\beta_\al$ appearing in~(\ref{eq:theorem1-LIN1}), since Theorem 1.2 of~\cite{LIN1} shows that $\P$-a.s.~$\mu_\al(\mathrm{d}\mathbf{v})$-a.e.,
\begin{displaymath}
\lim_{r\downarrow 0} \frac{\log \mu_\al (\mathcal{B}_{\bd}(\mathbf{v},r))}{\log r} = \beta_\al.
\end{displaymath}
Since $\beta_\al<\frac{1}{\al-1}<\lambda_\al$, $\P$-a.s.~the set 
\begin{displaymath}
B=\big\{\mathbf{v}\in \partial \Delta\a \colon \lim_{r\downarrow 0} \frac{\log \mu_\al (\mathcal{B}_{\bd}(\mathbf{v},r))}{\log r} = \beta_\al \big\}
\end{displaymath}
satisfies that $\mu_\al(B)=1$ whereas $\omega_\al(B)=0$. 

\begin{corollary}
\label{corol-intro}
For every $\al\in (1,2]$, $\P$-a.s.~the two measures $\mu_\al$ and $\omega_\al$ on the boundary of $\Delta\a$ are mutually singular.
\end{corollary}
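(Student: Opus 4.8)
The plan is to deduce Corollary~\ref{corol-intro} directly from Theorem~\ref{thm:dim-continu} together with the already-recorded fact (Theorem 1.2 of~\cite{LIN1}) about the local dimension of $\mu_\al$ under $\mu_\al(\rd\mathbf{v})$ itself. The point is that the two measures have incompatible local scaling exponents almost everywhere with respect to each of them, and mutual singularity is essentially a restatement of this incompatibility.

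First I would introduce, as in the excerpt, the Borel set
\begin{displaymath}
B=\Big\{\mathbf{v}\in \partial \Delta\a \colon \lim_{r\downarrow 0} \frac{\log \mu_\al (\mathcal{B}_{\bd}(\mathbf{v},r))}{\log r} = \beta_\al \Big\},
\end{displaymath}
which is well-defined since $\mathbf{v}\mapsto \mu_\al(\mathcal{B}_{\bd}(\mathbf{v},r))$ is measurable for each $r$ and the limit can be taken along $r\in\Q_{>0}$. By Theorem 1.2 of~\cite{LIN1} we have $\P$-a.s.\ $\mu_\al(B)=1$. On the other hand, by~\eqref{eq:loc-dim-harm} of Theorem~\ref{thm:dim-continu}, $\P$-a.s.\ for $\omega_\al$-a.e.\ $\mathbf{v}$ the limit in the definition of $B$ equals $\lambda_\al$; since $\beta_\al<\frac{1}{\al-1}<\lambda_\al$ by the strict inequalities stated after Theorem~\ref{thm:dim-discrete} and in~\eqref{eq:theorem1-LIN1}, such a $\mathbf{v}$ cannot lie in $B$. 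Hence $\P$-a.s.\ $\omega_\al(B)=0$. Thus $\P$-a.s.\ there is a Borel set $B\subseteq \partial \Delta\a$ with $\mu_\al(B)=1$ and $\omega_\al(\partial\Delta\a\setminus B)= \omega_\al(\partial\Delta\a)=1$, i.e.\ $\mu_\al$ is concentrated on $B$ and $\omega_\al$ on its complement. This is precisely the definition of mutual singularity, so $\mu_\al\perp\omega_\al$ $\P$-a.s.

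There is essentially no obstacle here beyond bookkeeping: the two quoted theorems do all the work, and the only mild care needed is with the order of quantifiers — one wants a \emph{single} $\P$-almost sure event on which \emph{both} ``$\mu_\al(B)=1$'' and ``$\omega_\al(B)=0$'' hold, which is immediate by intersecting the two full-probability events. (One could equally invoke the Hausdorff-dimension comparison: $\beta_\al$ is $\P$-a.s.\ the Hausdorff dimension of $\mu_\al$, while by~\eqref{eq:loc-dim-unif} and Lemma 4.1 of~\cite{LPP95} the dimension of $\omega_\al$ is $\frac{1}{\al-1}>\beta_\al$, and measures of different Hausdorff dimension are mutually singular — but the explicit set $B$ above gives the cleaner argument.)
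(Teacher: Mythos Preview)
Your proposal is correct and matches the paper's own argument essentially verbatim: the paper defines exactly the same set $B$, invokes Theorem~1.2 of~\cite{LIN1} to get $\mu_\al(B)=1$, invokes~\eqref{eq:loc-dim-harm} together with $\beta_\al<\frac{1}{\al-1}<\lambda_\al$ to get $\omega_\al(B)=0$, and concludes mutual singularity.
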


When $\alpha$ approaches 1, the Hausdorff dimension $\beta_\al$ of the harmonic measure $\mu_\al$ remains bounded (see Theorem 1.3 in~\cite{LIN1}). 
Naively, it would seem that the typical local dimension $\lal$ of $\mu_\al$ would behave asymptotically like $\frac{C}{\al-1}$ for some positive constant $C$. 
However, the explosion of $\lal$ turns out to be much faster than $(\al-1)^{-1}$ when $\alpha$ decreases to 1.

\begin{proposition}
\label{prop:dimension-explosion}
We have that
$$ 0 < \liminf_{\alpha \downarrow 1} \,(\al-1)^2\lal \leq \limsup_{\alpha \downarrow 1} \,(\al-1)^2\lal <\infty.$$
\end{proposition}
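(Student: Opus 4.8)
The plan is to read off the required two-sided estimate directly from the explicit expression for $\lal$ obtained in the course of proving Theorem~\ref{thm:dim-continu}. That argument computes $\lal$ via an ergodic theorem along the spine of the size-biased version of $\Gamma\a$: reweighting $\Gamma\a$ by its intrinsic martingale produces a law $\widehat{\P}$ under which $\Gamma\a$ carries a distinguished ray with conditional law $\bar\omega_\al$ given the tree; along this spine the branch points occur at rate $m_\al\colonequals\frac{\al}{\al-1}$, the number $\widehat N_\al$ of offspring at each branch point has the size-biased law $\widehat\theta_\al(k)=k\,\theta_\al(k)/m_\al$, the spine continues into a uniformly chosen one of them, and the $\widehat N_\al-1$ other subtrees are independent copies of $\Delta\a$. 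Using in addition that the harmonic mass carried by a subtree equals its share of the total conductance at the relevant branch point, this leads to a formula of the shape
\[
\lal\;=\;m_\al\,\widehat{\E}\!\left[-\log p^\ast\right],\qquad p^\ast=\frac{\cc_{i^\ast}}{\cc_{i^\ast}+\sum_{j=1}^{\widehat N_\al-1}\cc_j},
\]
where the $\cc_j$ are i.i.d.\ copies of the conductance $\cc\a$ of $\Delta\a$, $\cc_{i^\ast}$ is the (size-biased) conductance of the spine subtree, and $\widehat N_\al$, $\cc_{i^\ast}$, $(\cc_j)_j$ are mutually independent. The proposition then reduces to showing that both factors on the right-hand side are of order $(\al-1)^{-1}$ as $\al\downarrow1$.

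The structural inputs I would use are two crude, $\al$-free bounds on conductances. First, since $\Delta\a$ contains a geodesic segment of length $1$ from its root to $\partial\Delta\a$, Rayleigh monotonicity yields $\cc\a\ge1$ almost surely (and hence $\cc_{i^\ast}\ge1$ as well). Second, writing the conductance as a series resistance along the first segment of $\Delta\a$ gives $1/\cc\a=U+(1-U)/S$ with $U$ the (uniform on $[0,1]$) length of that segment and $S>0$, so $\cc\a\le1/U$; likewise $\cc_{i^\ast}\le1/U_{i^\ast}$, where under $\widehat{\P}$ the spine-segment length $U_{i^\ast}$ follows a $\mathrm{Beta}(1,m_\al)$ law, whence $\P(\cc_{i^\ast}>t)\le m_\al/t$ for $t\ge1$. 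Since each $\cc_j\ge1$, these bounds already give, $\widehat{\P}$-a.s.,
\[
0\;\le\;-\log p^\ast\;\le\;\log\widehat N_\al+\max_{1\le j\le\widehat N_\al-1}\log\tfrac{1}{U_j},
\]
and, on the event $\{\cc_{i^\ast}\le\widehat N_\al^{1/2}\}$, also $-\log p^\ast\ge\tfrac12\log\widehat N_\al-\log2$.

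The analytic heart would be the asymptotics of $\widehat{\E}[\log\widehat N_\al]$. From $\theta_\al(k)=\al\,\Gamma(k-\al)/(k!\,\Gamma(2-\al))$ and Stirling's formula one obtains $\widehat\theta_\al(k)=\frac{\al-1}{\Gamma(2-\al)}\,k^{-\al}\bigl(1+O(1/k)\bigr)$ uniformly in $\al\in(1,2]$; combined with the elementary identity $\int_1^\infty k^{-\al}\log k\,\mathrm{d}k=(\al-1)^{-2}$ and a comparison of sum and integral, this gives $\widehat{\E}[\log\widehat N_\al]\sim\frac{1}{(\al-1)\Gamma(2-\al)}\sim\frac{1}{\al-1}$ as $\al\downarrow1$. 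For the upper bound in the proposition one then notes $\widehat{\E}\bigl[\max_{j\le\widehat N_\al-1}\log(1/U_j)\bigr]=\widehat{\E}[H_{\widehat N_\al-1}]\le1+\widehat{\E}[\log\widehat N_\al]$, where $H_k=\sum_{i=1}^k1/i$ is the expectation of the maximum of $k$ i.i.d.\ unit exponentials; hence $\widehat{\E}[-\log p^\ast]=O((\al-1)^{-1})$ and $\lal=m_\al\widehat{\E}[-\log p^\ast]=O((\al-1)^{-2})$. For the lower bound one restricts to $\{\widehat N_\al\ge m_\al^{3}\}\cap\{\cc_{i^\ast}\le\widehat N_\al^{1/2}\}$, on which $-\log p^\ast\ge\tfrac12\log\widehat N_\al-\log2$; using the independence of $\cc_{i^\ast}$ and $\widehat N_\al$ together with $\P(\cc_{i^\ast}>k^{1/2})\le m_\al k^{-1/2}$, a routine truncation at a suitable power of $m_\al$ shows that the complementary event contributes only $O(\log m_\al)=o((\al-1)^{-1})$ to $\widehat{\E}[\log\widehat N_\al]$, so $\widehat{\E}[-\log p^\ast]\gtrsim(\al-1)^{-1}$ and $\lal\gtrsim(\al-1)^{-2}$. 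Together the two directions give $0<\liminf_{\al\downarrow1}(\al-1)^2\lal\le\limsup_{\al\downarrow1}(\al-1)^2\lal<\infty$.

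The main obstacle is bookkeeping rather than conceptual: the three random objects entering $p^\ast$ — the heavy-tailed spine degree $\widehat N_\al$, the heavy-tailed sibling conductances $\cc_j$, and the size-biased spine conductance $\cc_{i^\ast}$ — are a priori coupled through the recursive construction of $\Delta\a$, and every estimate must be kept uniform as $\al\downarrow1$. What makes this tractable is that the independence built into the spine decomposition, together with the deterministic sandwich $1\le\cc\a\le1/U$, decouple these objects and reduce each step above to tail computations for $\widehat\theta_\al$ and for the uniform (resp.\ $\mathrm{Beta}$) laws of the segment lengths. The $(\al-1)^{-2}$ behaviour then arises as the product of the branching-rate factor $m_\al\asymp(\al-1)^{-1}$ and the logarithmic-moment factor $\widehat{\E}[-\log p^\ast]\asymp(\al-1)^{-1}$; in particular $\lal$ blows up faster than $(\al-1)^{-1}$, unlike the Hausdorff dimension $\beta_\al$ of $\mu_\al$, which stays bounded.
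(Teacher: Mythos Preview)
Your proposal is correct and follows essentially the same route as the paper: both start from the spine formula $\lal=\frac{\al}{\al-1}\,\E\big[\log\frac{\widehat{\cc}\a+\cc\a_2+\cdots+\cc\a_{\widehat N_\al}}{\widehat{\cc}\a}\big]$, use the deterministic sandwich $1\le\cc\a\le 1/U$ (and its size-biased analogue $\widehat{\cc}\a\le 1/V_\al$), and identify $\E[\log\widehat N_\al]\asymp(\al-1)^{-1}$ via the Stirling/zeta asymptotics as the driving term. The only notable difference is in the lower bound: instead of your truncation on $\{\widehat N_\al\ge m_\al^3\}\cap\{\cc_{i^\ast}\le\widehat N_\al^{1/2}\}$, the paper simply splits $-\log p^\ast=\log\big(\widehat{\cc}\a+\sum\cc_j\big)-\log\widehat{\cc}\a$, bounds the first expectation below by $\E[\log\widehat N_\al]$ (since every term is $\ge1$), and bounds the second above by $\E[\log(1/V_\al)]=H\big(\tfrac{\al}{\al-1}\big)\sim\log\tfrac{1}{\al-1}=o((\al-1)^{-1})$, where $H$ is the analytic interpolation of the harmonic numbers. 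This avoids the conditioning entirely and gives the same conclusion with less bookkeeping; your argument is correct but the paper's decoupling of the two logarithms is cleaner.
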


Our proof of Proposition~\ref{prop:dimension-explosion} relies on the fact that the constant $\lal$ appearing in Theorems~\ref{thm:dim-discrete} and~\ref{thm:dim-continu} can be expressed explicitly with help of the conductance of $\Delta\a$. 
Informally, if we think of the random tree $\Delta\a$ as an electric network of resistors with unit resistance per unit length, the effective conductance $\mathcal{C}\a$ between the root and the boundary $\partial \Delta\a$ is a random variable larger than~1. From a probabilistic point of view, it is the mass under the Brownian excursion measure for the excursion paths away from the root that hit height 1. 
Following the definition of $\Delta\a$ and its electric network interpretation, the distribution of $\mathcal{C}\a$ satisfies the recursive distributional equation
\begin{equation}
\label{eq:rde}
\mathcal{C}^{(\alpha)}\,\overset{(\mathrm{d})}{=\joinrel=}\, \bigg (U + \frac{1-U}{ \mathcal{C}\a_1+ \mathcal{C}\a_2+\cdots+\mathcal{C}\a_{N_{\alpha}}}\bigg)^{-1},
\end{equation} 
where $(\mathcal{C}\a_{i})_{i\geq1}$ are i.i.d.~copies of $\mathcal{C}\a$, the integer-valued random variable $N_{\alpha}$ is distributed according to $\theta_{\alpha}$, and $U$ is uniformly distributed over $[0,1]$. All these random variables are supposed to be independent. 

\begin{proposition}
\label{prop:dim-formula}
For any $\alpha \in (1,2]$, the constant $\lal$ appearing in Theorems \ref{thm:dim-discrete} and~\ref{thm:dim-continu} is given by 
\begin{equation}
\label{eq:lal-value}
\lal= \E\big[\cw \a \cc \a \big]-1.
\end{equation}
Moreover, $\lal$ is decreasing for all $\al\in (1,2]$. 
\end{proposition}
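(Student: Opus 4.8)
The plan is to establish the formula $\lal = \E[\cw\a\cc\a]-1$ by connecting the asymptotics of the discrete harmonic measure to a size-biased picture on the continuous reduced tree $\Delta\a$, and then to prove monotonicity separately using the recursive distributional equation~\eqref{eq:rde}. The natural route for the formula is to identify $\lal$ as (minus) the almost-sure limit of $\frac{1}{n}\log \mu_n(\Omega_n)$ in an appropriate annealed/size-biased sense, and to compute that limit via an ergodic-type argument along the spine. Concretely, I would first set up the size-biased version of the reduced stable tree: conditioning $\Delta\a$ on a point $\mathbf{v}\in\partial\Delta\a$ chosen according to the uniform measure $\omega_\al$ produces a tree with a distinguished geodesic ray, and along this ray one reads off an i.i.d.\ (after the natural exponential time change giving $\Gamma\a$) sequence of ``bush'' data — the offspring numbers $N_\al$, the branch lengths, and the conductances $\cc\a_i$ of the subtrees hanging off the spine. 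The key observation is that the harmonic measure of a ball $\mathcal{B}_\bd(\mathbf{v},e^{-r})$ decomposes multiplicatively along the spine as a product of local factors, each of which is a ratio involving the conductance of the subtree rooted at the current spine vertex versus the conductances of its siblings.

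The main computation is then to take $\log$ of this product, divide by $r$, and apply the law of large numbers. This is where the two factors $\cw\a$ and $\cc\a$ enter: the exponential growth $e^{r/(\al-1)}$ of the level sets (governing the uniform measure, hence the size-biasing weight $\cw\a[x]/\cw\a$) contributes the $\frac{1}{\al-1}$ term, while the harmonic-measure ratio along the spine, once averaged under the size-biased law, contributes a drift that is precisely $\E[\cw\a\cc\a] - \frac{1}{\al-1} - 1$; combining these gives $\lal = \E[\cw\a\cc\a]-1$. The appearance of the product $\cw\a\cc\a$ rather than $\cc\a$ alone is exactly the effect of size-biasing by the uniform measure: one must weight each subtree hanging off the spine not just by its conductance but by its ``size'' $\cw\a$, because a uniformly chosen boundary point is more likely to lie in a heavier subtree. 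I would make this rigorous by invoking the spinal decomposition already developed for these trees (in the spirit of~\cite{LIN2}, which treats $\alpha=2$, and~\cite{LIN1}), checking that all the relevant moments are finite — in particular that $\E[\cw\a\cc\a]<\infty$, which requires controlling the tail of $\cc\a$ against the integrability of $\cw\a$, and which also justifies $\lal < \infty$; the strict inequality $\lal > \frac{1}{\al-1}$ follows because $\cc\a > 1$ almost surely forces $\E[\cw\a\cc\a] > \E[\cw\a] + (\text{positive correction}) = 1 + \frac{1}{\al-1}$ after accounting for the correlation, or more simply from a Jensen/strict-convexity argument applied to the harmonic ratios.

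For the monotonicity of $\alpha\mapsto\lal$ on $(1,2]$, I would work directly with the explicit expression $\lal = \E[\cw\a\cc\a]-1$ together with the recursive distributional equation~\eqref{eq:rde}. The idea is to derive, from~\eqref{eq:rde} and the analogous additive recursion for $\cw\a$ along the recursive structure of $\Delta\a$ (namely $\cw\a \overset{(\mathrm d)}{=} (1-U)\sum_{i=1}^{N_\al}\cw\a_i$), a fixed-point characterization of the quantity $m(\alpha) \colonequals \E[\cw\a\cc\a]$ — essentially $m(\alpha)$ should satisfy an equation obtained by taking the joint recursion, multiplying, and using independence, which expresses $m(\alpha)$ in terms of $\E[(\,U + (1-U)/S\,)^{-1}\cdot(1-U)\,]$-type integrals where $S = \sum_{i\le N_\al}\cc\a_i$ is the sum of the sibling conductances and the weights involve the $\cw\a_i$. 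One then wants to show this fixed point is monotone in $\alpha$. I expect the main obstacle to be precisely this monotonicity step: the dependence on $\alpha$ enters in two competing ways — through the offspring law $\theta_\al$ (whose mean $\frac{\al}{\al-1}$ \emph{decreases} in $\alpha$, so smaller $\alpha$ means more branching) and through the resulting law of $\cc\a$ itself — so a naive coupling does not obviously work, and one likely needs either a clever stochastic-domination argument relating $\theta_\al$ for different $\alpha$ (e.g.\ $\theta_\al$ is stochastically increasing as $\alpha\downarrow 1$, which can be checked from the ratio $\theta_\al(k+1)/\theta_\al(k) = (k-\al)/(k+1)$) combined with a monotone dependence of the whole recursive solution on the offspring law, or a differentiation-under-the-recursion argument showing $\frac{\rd}{\rd\alpha}m(\alpha) < 0$. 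I would also cross-check the conclusion against the boundary value $\alpha=2$, where $\cc^{(2)}$ has the known explicit law on $[1,\infty)$ from~\cite{LIN2} and $\lambda_2$ is the constant computed there, and against Proposition~\ref{prop:dimension-explosion}, which predicts $m(\alpha)\sim c\,(\al-1)^{-2}$ as $\alpha\downarrow 1$ — consistent with monotone blow-up.
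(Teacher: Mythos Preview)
Your sketch for the formula $\lal=\E[\cw\a\cc\a]-1$ correctly identifies the spinal/size-biased framework, but it has a genuine gap at the decisive step. The ergodic argument you describe (which the paper carries out in Section~\ref{sec:proof-continu}) produces $\lal$ as the logarithmic expectation
\[
\lal \;=\; \frac{\al}{\al-1}\,\E\!\left[\log\frac{\widehat{\cc}\a+\cc\a_2+\cdots+\cc\a_{\widehat N_\al}}{\widehat{\cc}\a}\right],
\]
where $\widehat{\cc}\a$ has the law of $\cc\a$ under the $\cw\a$-biased measure. You simply assert that this equals $\E[\cw\a\cc\a]-1$, but this identification is not an arithmetic rearrangement: it requires the analytic identity of Proposition~\ref{prop:c*-law}(3), namely \eqref{eq:g-c-c*} applied with $g(x)=\log x$, which comes from differentiating the distributional equation~\eqref{eq:c*-rde} for $\widehat{\cc}\a$ and integrating against $g'$. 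Without this step you cannot pass from the log-ratio average to the first moment $\E[\widehat{\cc}\a]=\E[\cw\a\cc\a]$. (Incidentally, your argument for $\lal>\frac{1}{\al-1}$ from $\cc\a>1$ only gives $\E[\cw\a\cc\a]>1$, i.e.\ $\lal>0$; the sharper bound needs the separate concavity argument the paper gives at the end of Section~\ref{sec:proof-continu}.)

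For monotonicity, your plan is in the right spirit but unnecessarily indirect. Rather than seeking a fixed-point equation for the scalar $m(\al)=\E[\cw\a\cc\a]$ or differentiating in $\al$, the paper works at the level of the full law $\widehat\gamma_\al$ of $\widehat{\cc}\a$: the map $\widehat\Phi_\al$ whose unique fixed point is $\widehat\gamma_\al$ is built from $G(v,n,x,(c_i)_{i\ge2})$, which is monotone in each argument, and the three inputs $V_\al$, $\widehat N_\al$, $\cc\a$ are each shown to be stochastically monotone in $\al$ (increasing, decreasing, decreasing respectively). This yields $\widehat\Phi_{\al_2}(\sigma)\preceq\widehat\Phi_{\al_1}(\sigma)$ for $\al_1\le\al_2$ and every $\sigma$, and iterating from any starting law gives the stochastic ordering $\widehat\gamma_{\al_2}\preceq\widehat\gamma_{\al_1}$, hence monotonicity of the first moment. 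Your coupling intuition about $\theta_\al$ is correct and is one ingredient here, but the clean mechanism is the contraction/iteration from Proposition~\ref{prop:c*-law}(1), which avoids the ``competing effects'' difficulty you flag.
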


The surprisingly simple formula~(\ref{eq:lal-value}) provides a unified expression of $\lal$ in terms of $\cw\a$ and $\cc\a$. 
We will see in Section~\ref{sec:conductance} that the product $\cw \a \cc \a$ has the same first moment as the conductance $\widehat \cc \a$ of a size-biased version of the reduced stable tree $\Delta\a$.

Note that for $1<\al \leq 2$, the $\al$-offspring distribution $\theta_\al$ is decreasing for the usual stochastic partial order. 
This property allows one to construct simultaneously all reduced stable trees $\Delta\a, \al\in (1,2]$ as a nested family, so that $\Delta^{(\al_2)} \subseteq \Delta^{(\al_1)}$ for all $1<\al_1\leq \al_2\leq 2$ (see Section~2.4 in~\cite{LIN1}). 
While the monotonicity of the typical local dimension $\lal$ is affirmed by the previous result, one question still open is whether the Hausdorff dimension $\beta_\al$ of the continuous harmonic measure $\mu_\al$ is also decreasing with respect to $\al$.

The rest of this paper is divided into three parts. 
The continuous model of Brownian motion on $\Delta\a$ is studied in Section~\ref{sec:continuous}, where we prove Theorem~\ref{thm:dim-continu}, Propositions~\ref{prop:dimension-explosion} and~\ref{prop:dim-formula}. Then we set up notation and terminology for the discrete setting in Section~\ref{sec:discrete}, while Section~\ref{sec:proof-thm1} is devoted to proving Theorem~\ref{thm:dim-discrete}. 
The arguments are basically adapted from those used in the finite variance case. 
However, we emphasize that several modifications are indispensable to circumvent the problem of infinite variance. 
Instead of repeating some highly analogous reasoning, we refer the reader to \cite{LIN2} for details that we omit. 

\smallskip
{\bf Acknowledgments.} The author is grateful to an anonymous referee for many comments that greatly improved the paper. 

\section{The continuous setting}
\label{sec:continuous}

\subsection{The reduced stable tree}
\label{sec:treedelta}

Let us begin with a formal definition of the reduced stable tree ${\Delta^{(\alpha)}}$ of parameter $\alpha \in(1,2]$. We set 
$$\mathcal{V} = \bigcup_{n=0}^\infty \mathbb{N}^n\,,$$
where by convention $\mathbb{N}=\{1,2,\ldots\}$ and $\mathbb{N}^0=\{\varnothing\}$. If $v=(v_1,\ldots,v_n)\in\mathcal{V}$, we set~$|v|=n$ the generation of the vertex $v$ (in particular, $|\varnothing|=0$),  and if $n\geq 1$,
we define the parent 
of~$v$ as $\bar v=(v_1,\ldots,v_{n-1})$ and then say that $v$ is a child of $\bar v$. For two elements $v=(v_1,\ldots,v_n)$ and $v'=(v'_1,\ldots,v'_m)$ belonging to $\mathcal{V}$, their concatenation is $vv'\colonequals (v_1,\ldots,v_n,v'_1,\ldots,v'_m)$. The notions of a descendant and an ancestor of an element of $\v$ are defined in the obvious way, with the convention that every $v\in \v$ is both an ancestor and a descendant of itself. 

An infinite subset $\Pi$ of $\mathcal{V}$ is an infinite discrete (rooted ordered) tree without leaves if there exists a collection of positive integers $k_{v}=k_{v}(\Pi)\in \N$ for every $v\in \mathcal{V}$ such that 
\begin{displaymath}
\Pi=\{\varnothing\}\cup\{(v_{1},\ldots,v_{n})\in \mathcal{V}: v_{j}\leq k_{(v_{1},\ldots,v_{j-1})} \mbox{ for every } 1\leq j\leq n\}.
\end{displaymath}

Recall that the generating function of the $\alpha$-offspring distribution $\theta_{\alpha}$ 
is given (see for instance~\cite[p.74]{DLG02}) as 
\begin{equation}
\label{eq:gene-fct}
\sum\limits_{k\geq 0}^{}\theta_{\alpha}(k)\,r^k =\frac{(1-r)^{\alpha}-1+\alpha r}{\alpha-1},\quad \forall r\in (0,1].
\end{equation}

For fixed $\alpha\in (1,2]$, we introduce a collection $(K_{\alpha}(v))_{v\in\v}$ of independent random variables distributed according to $\theta_{\alpha}$ under the probability measure $\P$, and define a random infinite discrete tree 
\begin{displaymath}
\Pi^{(\alpha)}\colonequals \{\varnothing\}\cup\{(v_{1},\ldots,v_{n})\in \v\colon v_{j}\leq K_{\alpha}((v_{1},\ldots,v_{j-1})) \mbox{ for every }1\leq j\leq n\}\,.
\end{displaymath}
We point out that $\Pi^{(\alpha)}$ is just a supercritical Galton--Watson tree with offspring distribution~$\theta_\al$. In particular, $\Pi^{(2)}$ is an infinite binary tree.

Let $(U_v)_{v\in\v}$ be another collection, independent of $(K_{\alpha}(v))_{v\in\v}$, consisting of independent real random variables uniformly distributed over $[0,1]$ under the same probability measure~$\P$. We set now $Y_\varnothing=U_\varnothing$ and then by induction, for every $v\in \Pi^{(\alpha)}$ different from the root, $Y_v = Y_{\bar v} + U_v(1- Y_{\bar v})$. 
Note that a.s.~$0\leq Y_v< 1$ for every $v\in \Pi^{(\alpha)}$. Consider the set
$$\Delta^{(\alpha)}_{0} \colonequals \big(\{\varnothing\}\times [0,Y_\varnothing] \big)\cup  \bigg(\bigcup_{v\in\Pi^{(\alpha)}\backslash\{\varnothing\}} \{v\} \times (Y_{\bar v}, Y_v]\bigg).$$
There is a straightforward way to define a metric $\bd$ on $\Delta^{(\alpha)}_{0}$, so that 
$(\Delta^{(\alpha)}_{0},\bd)$ is a (non-compact) $\R$-tree and, for every $x=(v,r)\in \Delta^{(\alpha)}_{0}$, we have $\bd((\varnothing,0), x)=r$. 
See \cref{fig:Delta} for an illustration of the tree $\Delta^{(\alpha)}_{0}$ when $\alpha <2$.

\begin{figure}[!h]
 \begin{center}
 \includegraphics[width=12cm]{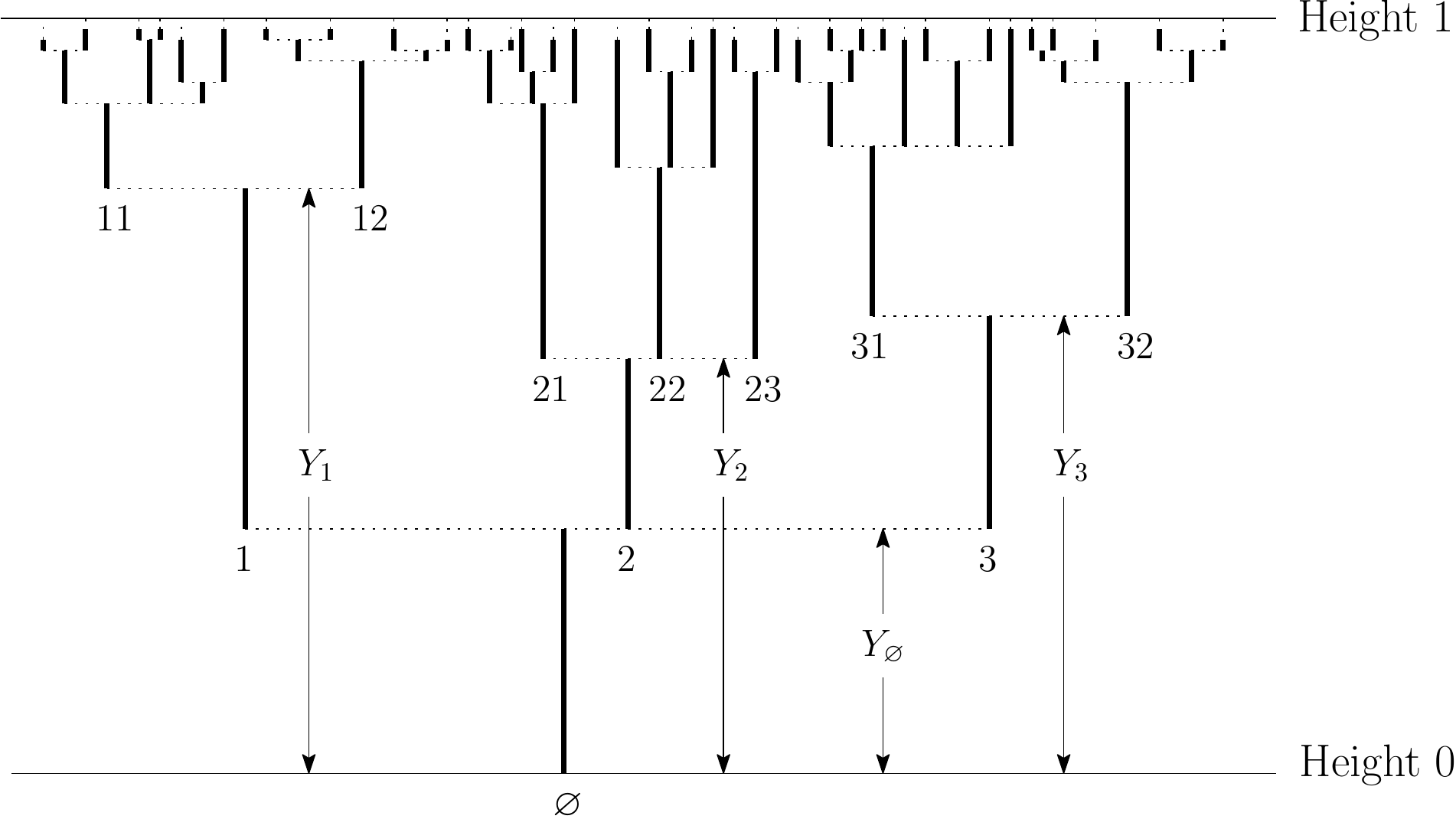}
 \caption{\label{fig:Delta}The random tree $\Delta^{(\alpha)}_{0}$ when $1\leq \alpha<2$}
 \end{center}
 \end{figure}

The reduced stable tree $\Delta\a$ is the completion of $\Delta^{(\alpha)}_{0}$ with respect to the metric $\bd$. It is immediate to see that $(\Delta^{(\alpha)},\bd)$ is a compact $\R$-tree, and 
$$\Delta^{(\alpha)}=\Delta^{(\alpha)}_{0} \cup \partial \Delta^{(\alpha)}$$
where $\partial \Delta^{(\alpha)}\colonequals \{x\in\Delta^{(\alpha)}\colon \bd((\varnothing,0), x)=1\}$ is the boundary of $\Delta\a$, which can be identified with a random subset of $\N^{\N}$. 
The point $(\varnothing,0)$ is called the root of $\Delta^{(\alpha)}$. For every $x\in\Delta^{(\alpha)}$, we set its height $H(x)=\bd((\varnothing,0), x)$. 
A genealogical order on $\Delta^{(\alpha)}$ can be defined by setting $x\prec y$ if and only if $x$ belongs to the geodesic path from the root to $y$.

Conditionally on $\Delta^{(\alpha)}$, we can define Brownian motion $(B_t)_{t\geq 0}$ on $\Delta^{(\alpha)}$ starting from the root and up to its first hitting time $T$ of $\partial \Delta^{(\alpha)}$. 
See e.g.~\cite[Section 2.1]{CLG13} for the details of this construction. 
The harmonic measure $\mu_{\alpha}$ is the distribution of $B_{T-}$, which is a (random) probability measure on $\partial \Delta^{(\alpha)}\subseteq \N^\N$.

\subsection{The continuous-time Galton--Watson tree}
\label{sec:ctgwtree}

To introduce a new tree sharing the same branching structure as $\Delta^{(\alpha)}$, we start with the same infinite discrete tree $\Pi^{(\alpha)}$ introduced in Section~\ref{sec:treedelta}. 
Consider now a collection $(V_v)_{v\in\mathcal{V}}$ of independent real random variables exponentially distributed with mean $1$ under the probability measure $\P$. We set $Z_\varnothing=V_\varnothing$ and then by induction, for every $v\in \Pi^{(\alpha)}$ different from the root, $Z_v = Z_{\bar v} + V_v$.
The continuous-time Galton--Watson tree (hereafter to be called CTGW tree for short) of stable index~$\alpha$ is the set
$$\Gamma^{(\alpha)} \colonequals\big(\{\varnothing\}\times [0,Z_\varnothing]\big) \cup  \bigg(\bigcup_{v\in \Pi^{(\alpha)}\backslash\{\varnothing\}} \{v\} \times (Z_{\bar v}, Z_v]\bigg),$$
which is equipped with the metric $d$ defined in the same way as $\bd$ in the preceding subsection. 
For this metric, $\Gamma^{(\alpha)}$ is a.s.~an infinite $\R$-tree. For every $x=(v,r)\in\Gamma^{(\alpha)}$, we keep the notation $H(x)=r=d((\varnothing,0),x)$ for the height of $x$.  

Observe that if $U$ is uniformly distributed over $[0,1]$, the random variable $-\log(1-U)$ is exponentially distributed with mean $1$. Hence we may and will suppose that the collection $(V_v)_{v\in\mathcal{V}}$ is constructed from the collection $(U_v)_{v\in\mathcal{V}}$ in the previous subsection via the formula $V_v=-\log(1-U_v)$ for every $v\in\mathcal{V}$. 
A homeomorphism from $\Delta^{(\alpha)}_{0}$ onto $\Gamma^{(\alpha)}$ is given by the mapping $\Psi$ defined as $\Psi(v,r)\colonequals (v,-\log(1-r))$ for every $(v,r)\in\Delta^{(\alpha)}_{0}$.

By stochastic analysis, we can write for every $t\in[0,T)$,
\begin{equation}
\label{BM-CTGW}
\Psi(B_t) =W\Big(\int_0^t (1-H(B_s))^{-2}\,\mathrm{d}s\Big)
\end{equation}
where $(W(t))_{t\geq 0}$ is Brownian motion with constant drift $1/2$ towards infinity on the CTGW tree~$\Gamma\a$ (this process is defined in a similar way as Brownian motion on $\Delta^{(\alpha)}$, except that it behaves like Brownian motion with drift $1/2$ on every line segment of the tree). 
It is easy to see that the Brownian motion $W$ is transient. 
From now on, when we speak about Brownian motion on the CTGW tree or on other similar infinite trees, we will always mean Brownian motion with drift $1/2$ towards infinity. 

By definition, the boundary of $\Gamma^{(\alpha)}$ is the set of all geodesic rays in $\Gamma^{(\alpha)}$ starting from the root $(\varnothing,0)$, and it can be canonically embedded into $\N^{\N}$. 
Due to the transience of Brownian motion on $\Gamma^{(\alpha)}$, there is an a.s.~unique geodesic ray denoted by $W_\infty$ that is visited by $(W(t))_{t\geq 0}$ at arbitrarily large times. 
The distribution of the exit ray $W_\infty$ yields a (random) probability measure $\nu_{\alpha}$ on $\N^{\N}$. Thanks to \eqref{BM-CTGW}, we have in fact $\nu_{\alpha}=\mu_{\alpha}$, provided we think of both $\mu_{\alpha}$ and $\nu_{\alpha}$ as probability measures on $\N^{\N}$. 

\smallskip
\noindent{\bf Infinite continuous trees.} 
We let $\mathscr{T}$ be the set of all pairs $(\Pi,(z_v)_{v\in\Pi})$ that satisfy the following conditions:
\begin{enumerate}
\item[\rm(i)] $\Pi$ is an infinite discrete tree without leaves in the sense of Section~\ref{sec:treedelta};
\item[\rm(ii)] $z_{v}\in [0,\infty)$ for all $v \in \Pi$\,;
\item[\rm(iii)] $z_{\bar v}< z_v$ for every $v\in\Pi\backslash\{\varnothing\}$\,;
\item[\rm(iv)] for every $\mathbf{v}\in \Pi_{\infty}\colonequals\{(v_1,v_2,\ldots)\in \N^{\N}\colon (v_{1},v_{2},\ldots,v_{n})\in \Pi, \forall n\geq 1\}$, we have 
$$\lim_{n\to\infty} z_{(v_1,\ldots,v_n)} =\infty.$$
\end{enumerate}

If $(\Pi,(z_v)_{v\in\Pi})\in\mathscr{T}$, we consider the associated ``tree''
$$\t \colonequals \big(\{\varnothing\}\times [0,z_\varnothing]\big) \cup  \bigg(\bigcup_{v\in\Pi\backslash\{\varnothing\}} \{v\} \times (z_{\bar v}, z_v]\bigg),$$
equipped with the natural distance defined as above. 
The set $\Pi_{\infty}$ can be identified with the boundary $\partial \t$ of the tree $\t$, which is defined as the collection of all geodesic rays in $\t$. 
We keep the notation $H(x)=r$ for the height of a point $x=(v,r)\in\t$. 
The genealogical order on $\t$ is defined as previously and again is denoted by $\prec$. If $\mathbf{v}=(v_1,v_2,\ldots)\in \Pi_{\infty}$ and $x=(v,r)\in \t$, we write $x\prec \mathbf{v}$ if $v=(v_1,v_2,\ldots,v_k)$ for some integer $k\geq 0$.

When we say that we consider a tree $\t\in \mathscr{T}$, it means that we are given a pair $(\Pi,(z_v)_{v\in\Pi})$ satisfying the above properties, with $\t$ being the associated tree. 
Clearly, for every $\alpha \in (1,2]$, the CTGW tree $\Gamma^{(\alpha)}$ is a random element in $\mathscr{T}$, and we write $\Theta_{\alpha}(\mathrm{d}\t)$ for its distribution. 

Let us fix $\t=(\Pi,(z_v)_{v\in\Pi})\in \mathscr{T}$. Under our previous notation, the root $\varnothing$ has $k_{\varnothing}$ offspring. 
We denote by $\t_{(1)},\t_{(2)},\ldots,\t_{(k_{\varnothing})}$ the subtrees of $\t$ rooted at the first branching point $(\varnothing, z_\varnothing)$. 
To be more precise, for every $1\leq i\leq k_{\varnothing}$, we define the shifted discrete tree $\Pi[i]=\{v\in \mathcal{V}\colon iv\in \Pi\}$, and $\t_{(i)}$ is the infinite continuous tree corresponding to the pair $(\Pi[i],(z_{iv}-z_{\varnothing})_{v\in \Pi[i]})$.
Under $\Theta_{\alpha}(\mathrm{d}\t)$, the offspring number $k_{\varnothing}$ is distributed according to $\theta_{\alpha}$. The branching property of the CTGW tree means that under the conditional law $\Theta_{\alpha}(\mathrm{d}\t\!\mid\! k_{\varnothing})$, the subtrees $\t_{(1)},\ldots,\t_{(k_{\varnothing})}$ are i.i.d.~following the same law $\Theta_{\alpha}$.

If $r>0$, the level set of $\t$ at height $r$ is $\t_r= \{x\in\t \colon H(x)=r\}$. If $x\in\t_r$, let $\t[x]$ denote the subtree of descendants of $x$ in $\t$. To define it formally, we write $v_x$ for the unique element of $\v$ such that $x=(v_x,r)$, then $\t[x]$ is the tree associated with the pair $(\Pi[v_{x}], (z_{v_xv}-r)_{v\in\Pi[v_{x}]})$.
As usual, $\t_r[x]$ stands for the level set at height $r$ of the tree $\t[x]$. 

As we have seen in the Introduction, the martingale limit 
$$\mathcal{W}\a(\t)=\lim_{r\to \infty} e^{-\frac{r}{\al-1}}\# \t_r $$ 
exists $\Theta_\al(\mathrm{d}\t)$-a.s., and $\int \mathcal{W}\a(\t) \Theta_\al(\mathrm{d}\t)=1$. For every $x\in \t$, we similarly set 
$$\mathcal{W}\a(\t[x])=\lim_{r\to \infty} e^{-\frac{r}{\al-1}}\# \t_r[x].$$
If $\mathbf{v}\in \partial \t$ is a geodesic ray passing through $x$, let $\mathcal{B}(\mathbf{v},H(x))$ denote the set of geodesic rays in~$\t$ that coincide with $\mathbf{v}$ up to height $H(x)$. 
Then $\Theta_\al(\mathrm{d}\t)$-a.s., the uniform measure $\bar \omega\a_\t$ on $\partial \t$ is the probability measure on $\partial \t$ characterized by 
\begin{displaymath}
\bar \omega\a_\t(\mathcal{B}(\mathbf{v},H(x)))= \exp\Big(-\frac{H(x)}{\al-1}\Big)\frac{\mathcal{W}\a(\t[x])}{\mathcal{W}\a(\t)}
\end{displaymath}
for every $x\in \t \hbox{ and } \mathbf{v}\in\partial \t$ such that $x\prec \mathbf{v}$.

For a fixed infinite continuous tree $\t$, we define the harmonic measure $\mu_\t$ on $\partial \t$ as the distribution of the exit ray chosen by Brownian motion on~$\t$ (with drift $1/2$ towards infinity).

\subsection{The invariant measure and the size-biased CTGW tree}
We write
\begin{displaymath}
\mathscr{T}^{*}\colonequals \mathscr{T}\times \N^{\N}
\end{displaymath}
for the set of all pairs consisting of a tree $\t\in \mathscr{T}$ and a distinguished geodesic ray $\mathbf{v}$. We define a transformation $S$ on $\mathscr{T}^{*}$ by shifting $(\t,\mathbf{v}=(v_1,v_2,\ldots))$ at the first branching point of $\t$, that is, $S(\t,\mathbf{v})= (\t_{(v_1)}, \wt{\mathbf{v}})$, where $\wt{\mathbf{v}}=(v_2,v_3,\ldots)$ and $\t_{(v_1)}$ is the tree rooted at the first branching point of $\t$ that is chosen by $\mathbf{v}$. 

The next result extends Proposition 5 in~\cite{LIN1} from the Yule tree $\Gamma^{(2)}$ to the CTGW tree $\Gamma\a$. 

\begin{proposition}
\label{prop:unif-meas-inv}
For all $\al\in(1,2]$, the probability measure $\mathcal{W}\a(\t)\Theta_\al(\mathrm{d}\t)\bar \omega\a_\t(\mathrm{d}\mathbf{v})$ on $\mathscr{T}^{*}$ is invariant under $S$.
\end{proposition}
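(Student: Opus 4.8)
The plan is to verify the invariance directly, by testing it against an arbitrary bounded measurable $F\colon\mathscr{T}^{*}\to\R_{+}$ and reducing the identity $\int F\circ S\,\mathrm{d}\nu_\al=\int F\,\mathrm{d}\nu_\al$ — where $\nu_\al(\mathrm{d}\t,\mathrm{d}\mathbf{v})\colonequals\cw\a(\t)\,\Theta_\al(\mathrm{d}\t)\,\bar\omega\a_\t(\mathrm{d}\mathbf{v})$ — to a single integral over an ``unshifted'' copy of the tree. (That $\nu_\al$ is a probability measure follows from $\int\cw\a(\t)\,\Theta_\al(\mathrm{d}\t)=1$, recalled above, together with the fact that each $\bar\omega\a_\t$ is a probability measure.) The reduction rests on two elementary identities at the first branching point of a fixed $\t=(\Pi,(z_v)_{v\in\Pi})\in\mathscr{T}$, with root lifetime $z_\varnothing$, offspring number $k_\varnothing$, and first-level subtrees $\t_{(1)},\ldots,\t_{(k_\varnothing)}$, which I would establish first.

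Sorting the points of $\t$ at height $r>z_\varnothing$ by the subtree containing them gives $\#\t_r=\sum_{i=1}^{k_\varnothing}\#(\t_{(i)})_{r-z_\varnothing}$; multiplying by $e^{-r/(\al-1)}$ and letting $r\to\infty$ yields, $\Theta_\al$-a.s.,
\begin{equation}
\label{eq:prop-W-split}
\cw\a(\t)=e^{-z_\varnothing/(\al-1)}\sum_{i=1}^{k_\varnothing}\cw\a(\t_{(i)}).
\end{equation}
Next, a geodesic ray $\mathbf{v}=(v_1,v_2,\ldots)\in\partial\t$ enters $\t_{(v_1)}$, where it becomes the ray $\wt{\mathbf{v}}=(v_2,v_3,\ldots)\in\partial\t_{(v_1)}$. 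Evaluating the defining identity for $\bar\omega\a_\t$ at a point $x\prec\mathbf{v}$ on the $v_1$-th branch just above the first branching point, and using the relation $\cw\a(\t[x])=e^{(H(x)-z_\varnothing)/(\al-1)}\cw\a(\t_{(v_1)})$ between the martingale limit of a subtree of descendants and that of $\t_{(v_1)}$, shows that the $\bar\omega\a_\t$-mass of the set of rays entering $\t_{(v_1)}$ equals $e^{-z_\varnothing/(\al-1)}\cw\a(\t_{(v_1)})/\cw\a(\t)$, and that the associated conditional probability measure, transported onto $\partial\t_{(v_1)}$, satisfies the defining property of $\bar\omega\a_{\t_{(v_1)}}$ and hence coincides with it. This yields, for every bounded measurable $g$,
\begin{equation}
\label{eq:prop-omega-split}
\int\bar\omega\a_\t(\mathrm{d}\mathbf{v})\,g(\mathbf{v})=\sum_{i=1}^{k_\varnothing}e^{-z_\varnothing/(\al-1)}\,\frac{\cw\a(\t_{(i)})}{\cw\a(\t)}\int\bar\omega\a_{\t_{(i)}}(\mathrm{d}\wt{\mathbf{v}})\,g(i\wt{\mathbf{v}}),
\end{equation}
where $i\wt{\mathbf{v}}$ denotes the concatenation.

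Then I would combine the pieces. Since $S(\t,\mathbf{v})=(\t_{(v_1)},\wt{\mathbf{v}})$, applying \eqref{eq:prop-omega-split} with $g(\mathbf{v})=F(\t_{(v_1)},\wt{\mathbf{v}})$ and using the cancellation $\cw\a(\t)\cdot\cw\a(\t_{(i)})/\cw\a(\t)=\cw\a(\t_{(i)})$ gives
\begin{equation*}
\int F(S(\t,\mathbf{v}))\,\nu_\al(\mathrm{d}\t,\mathrm{d}\mathbf{v})=\int\Theta_\al(\mathrm{d}\t)\sum_{i=1}^{k_\varnothing}e^{-z_\varnothing/(\al-1)}\,\cw\a(\t_{(i)})\int\bar\omega\a_{\t_{(i)}}(\mathrm{d}\wt{\mathbf{v}})\,F(\t_{(i)},\wt{\mathbf{v}}).
\end{equation*}
I would then condition under $\Theta_\al$ on the pair $(k_\varnothing,z_\varnothing)$, which has law $\theta_\al\otimes\mathrm{Exp}(1)$; by the branching property of the CTGW tree the subtrees $\t_{(1)},\ldots,\t_{(k_\varnothing)}$ are, given this pair, i.i.d.\ with law $\Theta_\al$, so every summand in $i$ contributes the same amount and the right-hand side becomes
\begin{equation*}
\Big(\sum_{k\ge0}k\,\theta_\al(k)\Big)\Big(\int_0^\infty e^{-z}e^{-z/(\al-1)}\,\mathrm{d}z\Big)\int\cw\a(\t')\,\Theta_\al(\mathrm{d}\t')\,\bar\omega\a_{\t'}(\mathrm{d}\mathbf{v}')\,F(\t',\mathbf{v}').
\end{equation*}
The mean of $\theta_\al$ is $\al/(\al-1)$ (recalled in Section~\ref{sec:treedelta}) and $\int_0^\infty e^{-z\al/(\al-1)}\,\mathrm{d}z=(\al-1)/\al$, so the prefactor equals $1$ and the right-hand side is $\int F\,\mathrm{d}\nu_\al$, which is the desired invariance.

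The step I expect to be the main obstacle, and the one deserving the most care in a full write-up, is the rigorous justification of \eqref{eq:prop-omega-split}: one has to make precise the bijective identification between the geodesic rays of $\t$ entering a given first-level subtree and the boundary $\partial\t_{(i)}$, and then check that the renormalized restriction of $\bar\omega\a_\t$ to the corresponding cylinder is carried by this identification exactly onto $\bar\omega\a_{\t_{(i)}}$ — i.e.\ that it satisfies the characterizing equality on every ball $\mathcal{B}(\cdot,H(\cdot))$, so that uniqueness of such a measure applies. This is essentially bookkeeping once \eqref{eq:prop-W-split} and the natural compatibility between subtrees of descendants in $\t$ and in $\t_{(i)}$ are in place, but it is the only point where the recursive branching structure is genuinely used; identity \eqref{eq:prop-W-split} itself holds simultaneously for $\t$ and for all of the $\t_{(i)}$ because the martingale limit exists $\Theta_\al$-a.s.\ and the branching property transmits this to each subtree.
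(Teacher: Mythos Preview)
Your proof is correct and follows essentially the same approach as the paper: both compute $\int F\circ S\,\mathrm{d}\nu_\al$ by decomposing $\cw\a(\t)$ and $\bar\omega\a_\t$ at the first branching point, then apply the branching property of the CTGW tree to factor out the prefactor $\big(\sum_k k\theta_\al(k)\big)\int_0^\infty e^{-z\al/(\al-1)}\,\mathrm{d}z=1$. Your write-up is somewhat more explicit than the paper's in justifying the decomposition \eqref{eq:prop-omega-split} of the uniform measure, which the paper uses without comment.
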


\begin{proof}
Fix a bounded measurable function $F$ on $\mathscr{T}^{*}$. 
Under $\Theta_\al(\rd\t)$, the height $z_{\varnothing}$ of the first branching point is exponentially distributed with mean~1, while the offspring number $k_\varnothing$ is distributed according to $\theta_\al$. Recall that $\sum k\theta_\al(k)=\frac{\al}{\al-1}$ and 
\begin{equation}
\label{eq:decomp-w}
\cw\a(\t) = \sum_{i=1}^{k_\varnothing} e^{-\frac{z_\varnothing}{\al-1}} \cw\a(\t_{(i)}).
\end{equation}
Using these remarks and the branching property of the CTGW tree, we see that
\begin{eqnarray*}
&& \int F\circ S (\t,\mathbf{v})\mathcal{W}\a(\t)\Theta_\al(\mathrm{d}\t) \bar\omega_\t \a(\mathrm{d}\mathbf{v}) \\
&= & \sum_{k=2}^{\infty}\theta_\al(k)\sum_{i=1}^{k}\int F(\t_{(i)},\mathbf{u}) e^{-\frac{z_{\varnothing}}{\alpha-1}}\mathcal{W}\a(\t_{(i)}) \Theta_\al(\mathrm{d}\t\!\mid k_\varnothing=k)\,\bar \omega_{\t_{(i)}}\a(\mathrm{d}\mathbf{u}) \\
&=& \Big(\sum_{k=2}^{\infty} k \theta_\al(k)\Big)\times \Big(\int_{0}^{\infty} e^{-\frac{z}{\alpha-1}}e^{-z}\mathrm{d}z\Big) \times\int F(\t,\mathbf{u}) \mathcal{W}\a(\t)\,\Theta_\al(\mathrm{d}\t)\,\bar \omega_{\t}\a(\mathrm{d}\mathbf{u})\\
&=& \int F(\t,\mathbf{u})\mathcal{W}\a(\t) \Theta_\al(\mathrm{d}\t) \bar \omega_\t\a(\mathrm{d}\mathbf{u}),
\end{eqnarray*}
which shows the required invariance.
\end{proof}

To fully understand this invariant measure, we construct a size-biased version $\widehat \Gamma\a$ of the CTGW tree $\Gamma\a$, following the popularized idea of size-biasing a Galton--Watson tree (see e.g.~\cite{CRW91,LPP95b}). 
Let us start with the root $\varnothing$. 
It lives for a random exponential lifetime with parameter $\frac{\al}{\al-1}$, then it dies and simultaneously gives birth to a random number $N_1$ of children, with $N_1$ distributed as the size-biased offspring number $\widehat N_\al$. 
We pick one of these children uniformly at random, say $v_1\in \{1,2,\ldots, N_1\}$. We give independently the other children independent descendant trees distributed as $\Gamma\a$, whereas $v_1$ lives for another independent exponential lifetime with parameter $\frac{\al}{\al-1}$ and then reproduces a random number $N_2$ of children with $N_2$ an independent copy of $N_1$. 
Again, we choose one of the children of $v_1$ uniformly at random, call it $v_2 \in \{1,2,\ldots, N_2\}$, and give the others independent CTGW descendant trees distributed as $\Gamma\a$. 
Repeating this procedure independently for infinitely many times, we obtain a random infinite tree $\widehat \Gamma\a$ which will be called the size-biased CTGW tree with parameter $\al\in (1,2]$. 

A formal definition of $\widehat \Gamma\a$ with the distinguished geodesic ray $\widehat{\mathbf{v}}=(v_1,v_2,\ldots)$ as a random element of $\mathscr{T}^*$ can be given similarly as in Section 2.6 of \cite{LIN2}. 
Notice that the successive heights $(Z_{v_k})_{k\geq 1}$ of the vertices on the ``spine'' $\widehat{\mathbf{v}}$ are distributed as a homogeneous Poisson process on $(0,\infty)$ with intensity $\frac{\al}{\al-1}$. 

Keeping Proposition~\ref{prop:unif-meas-inv} in mind, we show by the next result why it is convenient to think of $\widehat \Gamma\a$ when studying a random geodesic ray in $\Gamma\a$ sampled according to $\bar \omega\a$. 

\begin{lemma}
\label{lem:CTGWbias}
The pair $\big(\widehat \Gamma\a, \widehat{\mathbf{v}}\big)\in \mathscr{T}^*$ follows the distribution $\mathcal{W}\a(\t)\Theta_\al(\mathrm{d}\t)\bar \omega\a_\t(\mathrm{d}\mathbf{v})$.
\end{lemma}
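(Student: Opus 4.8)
The plan is to compute, for an arbitrary bounded measurable test function $F$ on $\mathscr{T}^*$, the expectation $\E[F(\widehat\Gamma\a,\widehat{\mathbf v})]$ directly from the inductive construction of the size-biased tree, and to check that it equals $\int F(\t,\mathbf v)\,\mathcal{W}\a(\t)\,\Theta_\al(\rd\t)\,\bar\omega\a_\t(\rd\mathbf v)$. The natural device is to prove the stronger statement that the two measures agree when tested against functions that depend only on the tree up to the $k$-th spine vertex (together with the attached subtrees hanging off the spine below that level and the identity of the spine up to that level), and then pass to the limit $k\to\infty$ by a monotone class / martingale convergence argument, since the $\sigma$-fields generated by these truncations increase to the full Borel $\sigma$-field on $\mathscr{T}^*$.

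The key computation is at a single spine vertex. First I would record that, by the size-biasing prescription, the offspring number $N$ at the root of $\widehat\Gamma\a$ has law $\widehat N_\al$, i.e.\ $\P(N=k)=\tfrac{\al-1}{\al}k\,\theta_\al(k)$; the spine child is chosen uniformly among the $N$ children; the lifetime of the root is exponential with parameter $\tfrac{\al}{\al-1}$; and the non-spine children get independent copies of $\Gamma\a$. On the other side, under $\mathcal{W}\a(\t)\Theta_\al(\rd\t)\bar\omega\a_\t(\rd\mathbf v)$ one uses the decomposition \eqref{eq:decomp-w} of $\mathcal{W}\a(\t)$ as a sum over the $k_\varnothing$ first-generation subtrees, the branching property of the CTGW tree (the subtrees $\t_{(1)},\dots,\t_{(k_\varnothing)}$ are i.i.d.\ $\Theta_\al$ given $k_\varnothing$, and the first branching height $z_\varnothing$ is independent exponential with mean $1$), and the characterizing identity $\bar\omega\a_\t(\mathcal B(\mathbf v,z_\varnothing))=e^{-z_\varnothing/(\al-1)}\mathcal{W}\a(\t_{(v_1)})/\mathcal{W}\a(\t)$ for the uniform measure, so that the mass $\bar\omega\a_\t(\rd\mathbf v)$ of rays entering the $i$-th subtree, weighted by $\mathcal{W}\a(\t)$, is exactly $e^{-z_\varnothing/(\al-1)}\mathcal{W}\a(\t_{(i)})\,\bar\omega\a_{\t_{(i)}}(\rd\widetilde{\mathbf v})$. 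Summing the term $e^{-z_\varnothing/(\al-1)}\mathcal{W}\a(\t_{(i)})$ over $i=1,\dots,k_\varnothing$ against $\theta_\al(k_\varnothing)$ produces the biased weight: $\sum_k \theta_\al(k)\cdot k = \tfrac{\al}{\al-1}$ for the choice of $k$, a factor $\tfrac1k$ absorbed by the uniform choice of spine child (matching the uniform choice in the size-biased construction), and the extra factor $e^{-z_\varnothing/(\al-1)}$ combines with the $e^{-z_\varnothing}$ density of $z_\varnothing$ to give $e^{-z_\varnothing\al/(\al-1)}$, i.e.\ the spine vertex height becomes exponential with parameter $\tfrac{\al}{\al-1}$ — precisely the law in the size-biased construction, consistent with the remark that the spine heights form a Poisson process of intensity $\tfrac{\al}{\al-1}$. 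The residual object after the shift $S$ is $\mathcal{W}\a(\t_{(v_1)})\Theta_\al(\rd\t_{(v_1)})\bar\omega\a_{\t_{(v_1)}}(\rd\widetilde{\mathbf v})$, which is the same measure one level down, so the induction closes; the non-spine subtrees carry plain $\Theta_\al$ in both descriptions.

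The passage to the limit is routine once the finite-$k$ agreement is established: the events determining the first $k$ spine vertices, their offspring counts, and the subtrees hanging below level $k$ generate a filtration whose union is measure-determining on $\mathscr{T}^*$, and both candidate laws are probability measures (recall $\int\mathcal{W}\a(\t)\Theta_\al(\rd\t)=1$ and $\bar\omega\a_\t$ is a probability measure, and the size-biased construction manifestly yields one), so agreement on this generating algebra forces agreement everywhere. The only genuinely delicate point is the single-vertex bookkeeping above — making sure the combinatorial factors $k\theta_\al(k)$, $\tfrac1k$, and the exponential-tilting factor $e^{-z_\varnothing/(\al-1)}$ line up exactly with, respectively, the biased offspring law $\widehat N_\al$, the uniform spine selection, and the reparametrised spine lifetime — and verifying that \eqref{eq:decomp-w} together with the defining property of $\bar\omega\a_\t$ genuinely gives the clean factorisation of $\mathcal{W}\a(\t)\bar\omega\a_\t(\rd\mathbf v)$ into a sum of the corresponding objects on the subtrees. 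This is essentially the content already used in Proposition~\ref{prop:unif-meas-inv}, so I would phrase the proof so as to reuse that computation, observing that iterating the invariance under $S$ is exactly what identifies the $S$-stationary measure with the law of the spine construction.
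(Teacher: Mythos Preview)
The paper does not actually supply a proof of this lemma: it is stated without argument, the surrounding text merely pointing to the analogous construction in \cite[Section~2.6]{LIN2} and to the classical size-biasing literature \cite{CRW91,LPP95b}. Your strategy is correct and is precisely the standard one --- peel off the first spine vertex using \eqref{eq:decomp-w}, the branching property, and the defining relation for $\bar\omega\a_\t$, check that the three factors $k\theta_\al(k)$, $1/k$, and $e^{-z_\varnothing/(\al-1)}$ reproduce respectively the biased offspring law $\widehat N_\al$, the uniform spine choice, and the exponential$(\tfrac{\al}{\al-1})$ spine increment, then iterate and pass to the limit by monotone class. This is exactly the computation the paper carries out (in the forward direction) in the proof of Proposition~\ref{prop:unif-meas-inv}, so your suggestion to reuse that calculation is apt; there is nothing missing from your outline.
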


\subsection{The size-biased reduced tree}
\label{sec:size-biased-reduced}

Recall the bijection $\Psi \colon (v,r)\in \Delta\a_0 \mapsto (v,-\log (1-r))\in \Gamma\a$ introduced in Section~\ref{sec:ctgwtree}. 
If we apply its inverse $\Psi^{-1}(v,s)=(v,1-e^{-s})$ to the size-biased CTGW tree $\widehat \Gamma\a$ and then take the natural compactification, we obtain a random compact rooted tree $\widehat \Delta\a$ called the size-biased reduced tree of parameter $\al\in(1,2]$. 
A Poissonian description of $\widehat \Delta\a$ is given as follows.  

First, under the mapping $\Psi^{-1}$ the geodesic ray $\widehat{\mathbf{v}}=(v_1,v_2,\ldots)$ in $\widehat \Gamma\a$ corresponds to a distinguished point at height 1 in $\widehat \Delta\a$ that we will still denote by $\widehat{\mathbf{v}}$. 
Along the ancestral line of $\widehat{\mathbf{v}}$ in $\widehat \Delta\a$, we keep the same notation $(v_k)_{k\geq 1}$ for the branching points, with their respective heights 
\[
Y_{v_k}\colonequals 1-\exp(- Z_{v_k})\,, \quad k\geq 1\,,
\]
increasing to 1 as $k\to \infty$. 
It is immediate to see that $(Y_{v_k})_{k\geq 1}$ is distributed on $(0,1)$ as a Poisson process with intensity measure $\frac{\al}{\al-1}(1-x)^{-1}\mathbf{1}_{x\in(0,1)}\mathrm{d}x$. 
Moreover, if we set $Y_{v_0}=0$, the sequence 
\[ 
V_k \colonequals \frac{Y_{v_k}-Y_{v_{k-1}}}{1-Y_{v_{k-1}}}, \qquad k\geq 1,
\]
consists of i.i.d.~random variables with the density function $\frac{\al}{\al-1}(1-x)^{\frac{1}{\al-1}}$ over $[0,1]$. 

For any $r>0$, we write $r\Delta_0\a$ for the ``same'' random tree as $\Delta_0\a$ with the distance $\mathbf{d}$ multiplied by the factor $r$, which means that the infinite discrete tree $\Pi\a$ associated to $\Delta_0\a$ remains the same while all the uniform random variables $U_v, v\in \mathcal{V}$ involved in the definition of $\Delta_0\a$ are replaced by $rU_v, v\in \mathcal{V}$. 

Conditionally on $(Y_{v_k})_{k\geq 1}$, independently to each branching point $v_k$ of height $Y_{v_k}$ on the ancestral line of $\widehat{\mathbf{v}}$ in $\widehat \Delta\a$, we graft a random number $J_k$ of independent trees, all distributed according to $(1-Y_{v_k})\Delta_0\a$. 
The random number $J_k$ is independent of those trees grafted at $v_k$, and it has the same distribution as $\widehat N_\al-1$.
As in the case of the size-biased CTGW tree $\widehat \Gamma\a$, we assume the independence of $J_k$ and the trees at grafted $v_k$ among all different levels $k \geq 1$. 

\begin{figure}[!h]
 \begin{center}
 \includegraphics[width=11cm]{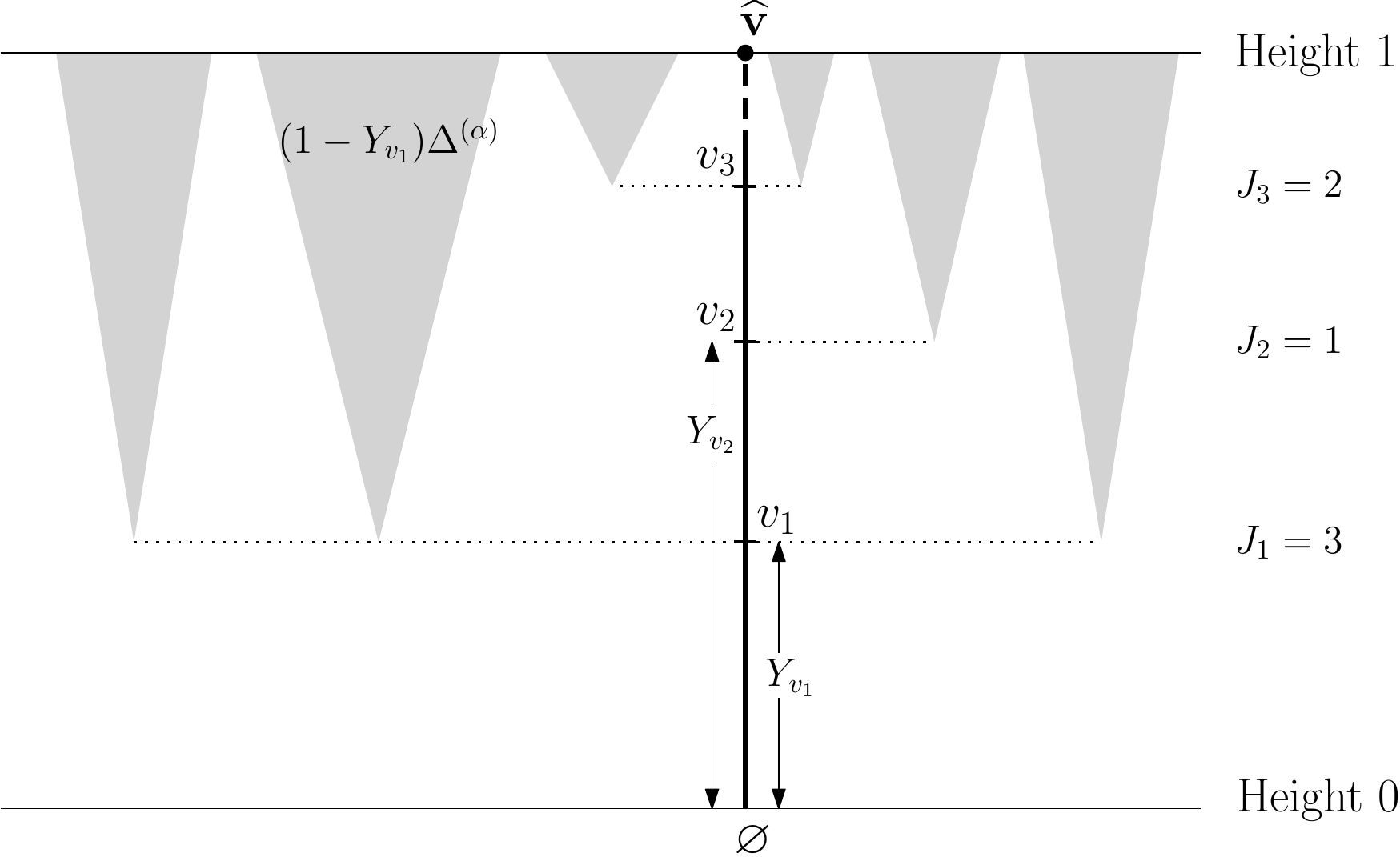}
 \caption{\label{fig:size-bias-Delta}Schematic representation of the size-biased reduced tree $\widehat \Delta\a$ when $1\leq \alpha<2$}
 \end{center}
 \end{figure}

To finally get the size-biased reduced tree $\widehat \Delta\a$, we take the completion of the tree obtained above with respect to its natural distance (see Fig.~\ref{fig:size-bias-Delta}). 
This compactification is equivalent to adding all the points on the boundary of the grafted trees. 
All these points together with the distinguished one $\widehat{\mathbf{v}}$ form the boundary $\partial \widehat \Delta\a$, which is defined as the set of all points in $\widehat \Delta\a$ that are at height 1. 

\rem The $\alpha$-stable continuum random tree arises as the scaling limit of rescaled discrete Galton--Watson trees with a critical offspring distribution in the domain of attraction of a stable distribution of index $\al\in(1,2]$ (see e.g.~the monograph~\cite{DLG02} of Duquesne and Le Gall).
Under the same assumption, if we condition the critical Galton--Watson tree to survive up to generation~$n$ and rescale it by the factor $n^{-1}$, after letting $n\to \infty$, we get as limit an $\al$-stable tree conditioned to reach height 1 (see Theorem 4.1 in~\cite{DLG05}), from which the reduced stable tree $\Delta\a$ can be obtained as a pruned tree by keeping only the points at height less than 1 that possess a descendant that reaches height 1. 
The probabilistic structure of the size-biased reduced tree $\widehat \Delta\a$ can also be derived from a spinal decomposition (Theorem 4.5 in~\cite{DLG05}) of the conditional $\al$-stable tree along the ancestral line of a typical point at height 1. 
Since this point of view is not needed for proving our main results, we omit the details. 

\subsection{The continuous conductance}
\label{sec:conductance}

Recall that the random variable $\mathcal{C}^{(\alpha)}$ is defined, from an electric network point of view, as the effective conductance between the root and the boundary $\partial \Delta^{(\alpha)}$ in $\Delta\a$. 
This is equivalent to defining $\mathcal{C}^{(\alpha)}$ as the mass assigned by the excursion measure away from the root for Brownian motion (with no drift) on $\Delta\a$, to the set of trajectories that reach height~1 before coming back to the root. 
This probabilistic definition of the conductance has the advantage of being easily generalized to a more general setting. 
If $\t$ is an infinite continuous tree, we define its conductance $\mathcal{C}(\t)$ between the root and $\partial\t$ as the mass assigned by the excursion measure away from the root for Brownian motion with drift $1/2$ on $\t$, to the set of trajectories that go to infinity without returning to the root. 
By the correspondence between the CTGW tree $\Gamma\a$ and the reduced stable tree $\Delta\a$, we can verify that $\mathcal{C}(\Gamma\a)$ has the same law as $\mathcal{C}^{(\alpha)}$. 
See~\cite[Section 2.3]{CLG13} for details.

In Section 2.3 of \cite{LIN1}, it is shown that for $\alpha \in (1,2]$ the law of $ \mathcal{C}\a$ is characterized by the distributional identity~\eqref{eq:rde} in the class of all probability measures on $[1,\infty]$. 
Now let us take into account the martingale limit $\cw\a$. 
From~\eqref{eq:decomp-w} and the branching property of $\Gamma\a$, it follows that the joint distribution of $(\cw\a, \cc\a)$ satisfies the distributional equation
\begin{equation}
\label{eq:rde-joint}
\big(\mathcal{W}\a, \mathcal{C}\a \big)\,\overset{(\mathrm{d})}{=\joinrel=}\, \bigg ( (1-U)^{\frac{1}{\al-1}} (\mathcal{W}\a_{1}+\cdots+\mathcal{W}\a_{N_\al}), \Big(U + \frac{1-U}{\mathcal{C}\a_1+\cdots+ \mathcal{C}\a_{N_\al}}\Big)^{-1}\bigg),
\end{equation}
in which $(\mathcal{W}\a_i, \cc\a_i)_{i\geq 1}$ are i.i.d.~copies of $(\mathcal{W}\a, \cc\a)$, the integer-valued random variable $N_\al$ is distributed according to $\theta_\al$, and $U$ is uniformly distributed over $[0,1]$. All these random variables are supposed to be independent. 

Under $\P$, we define for every $\al \in (1,2]$ a positive random variable $\widehat{\cc}\a$ distributed as $\mathcal{C}(\t)$ under the probability measure $\mathcal{W}\a(\t)\Theta_\al(\mathrm{d}\t)$. 
If the size-biased reduced tree $\widehat \Delta\a$ is viewed as an electric network of ideal resistors with unit resistance per unit length, we will see that its effective conductance between the root and $\partial \widehat \Delta\a$ has the same law as $\widehat \cc\a$.  
For this reason, we sometimes call $\widehat \cc\a$ the size-biased conductance.

\begin{lemma}
\label{lem:conduc-bias}
The random variable $\widehat{\mathcal{C}}\a$ satisfies the distributional identity
\begin{equation}
\label{eq:c*-rde}
\widehat{\mathcal{C}}\a \,\overset{(\mathrm{d})}{=\joinrel=}\, \bigg(V_\al + \frac{1-V_\al}{\widehat{\mathcal{C}}\a+ \mathcal{C}\a_2+\cdots +\cc\a_{\widehat{N}_\al}}\bigg)^{-1},
\end{equation}
where in the right-hand side the random variable $V_\al$ has density function $\frac{\al}{\al-1}(1-x)^{\frac{1}{\al-1}}$ over $[0,1]$, the integer-valued random variable $\widehat{N}_\al$ has the size-biased distribution of $\theta_\al$, and $(\cc\a_i)_{i\geq 2}$ are i.i.d.~copies of $\cc\a$. 
All these random variables $V_\al, \widehat{N}_\al, \widehat{\mathcal{C}}\a$ and $(\cc\a_i)_{i\geq 2}$ are independent. 
\end{lemma}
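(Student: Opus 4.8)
The plan is to read off the identity~\eqref{eq:c*-rde} from the Poissonian description of the size-biased reduced tree $\widehat \Delta\a$ in Section~\ref{sec:size-biased-reduced}, by splitting $\widehat \Delta\a$ at its first branching point and applying the elementary series and parallel laws for effective conductance. As a first step I would record the fact announced just before the statement, namely that the effective conductance of $\widehat \Delta\a$ between its root and $\partial \widehat \Delta\a$ is distributed as $\widehat \cc\a$: by construction $\widehat \Delta\a = \Psi^{-1}(\widehat \Gamma\a)$ up to completion, Lemma~\ref{lem:CTGWbias} gives that the tree marginal of $\widehat \Gamma\a$ is $\cw\a(\t)\Theta_\al(\rd\t)$, and the conductance is preserved under the correspondence $\Psi$ between a CTGW-type tree and the associated reduced tree (see~\cite[Section 2.3]{CLG13}); hence $\cc(\widehat \Delta\a) = \cc(\widehat \Gamma\a)$ has the law of $\cc(\t)$ under $\cw\a(\t)\Theta_\al(\rd\t)$, which is precisely the definition of $\widehat \cc\a$.

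Next I would describe $\widehat \Delta\a$ near its first branching point $v_1$, which sits at height $V_1$, where $V_1$ has density $\frac{\al}{\al-1}(1-x)^{\frac{1}{\al-1}}$ on $[0,1]$; set $V_\al \colonequals V_1$. At $v_1$ there are $\widehat N_\al$ children: one continues the spine, and the remaining $\widehat N_\al - 1$ carry independent grafted trees, each distributed as $(1-V_\al)\Delta\a$ and independent of $V_\al$ and $\widehat N_\al$. The key self-similarity observation is that, after dilating distances by the factor $(1-V_\al)^{-1}$, the subtree of $\widehat \Delta\a$ rooted at $v_1$ is again distributed as $\widehat \Delta\a$ and is independent of $V_\al$, of $\widehat N_\al$ and of the grafted trees; this is because the successive spine increments $V_k$ are i.i.d., and the numbers of grafted trees as well as the grafted trees themselves are attached independently across the levels $k\geq 1$.

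Granting these two inputs, the conclusion is a direct bookkeeping with the electric network $\widehat \Delta\a$ (unit resistance per unit length). The grafted subtree built from the $i$-th off-spine child of $v_1$ has conductance $\cc\a_i/(1-V_\al)$, where the $\cc\a_i$ are i.i.d. copies of $\cc\a$, while the on-spine subtree above $v_1$ has conductance $\widehat \cc\a/(1-V_\al)$ with $\widehat \cc\a$ an independent copy; putting the $\widehat N_\al$ subtrees in parallel, the conductance seen from $v_1$ equals $(1-V_\al)^{-1}\big(\widehat \cc\a + \cc\a_2 + \cdots + \cc\a_{\widehat N_\al}\big)$. Placing this in series with the initial segment from the root to $v_1$, whose resistance is its length $V_\al$, and taking the reciprocal of the total resistance yields exactly~\eqref{eq:c*-rde}.

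The one genuinely delicate point is the self-similarity and independence claim of the second paragraph; everything else (the scaling of conductance under dilation of the metric, and the series and parallel laws) is standard and already underlies the derivation of~\eqref{eq:rde} for $\cc\a$ itself. A clean way to establish it is to phrase the argument at the level of the size-biased CTGW tree $\widehat \Gamma\a$, where the heights of the spine vertices form a homogeneous Poisson process of intensity $\frac{\al}{\al-1}$ on $(0,\infty)$, so that the restart property at the first spine vertex — the subtree rooted there being an independent copy of $\widehat \Gamma\a$ decorated with independent bushes distributed as $\Gamma\a$ — is transparent, and then to transport this statement back to $\widehat \Delta\a$ through $\Psi^{-1}$, which carries the exponential spine heights to the Poisson process with intensity $\frac{\al}{\al-1}(1-x)^{-1}\rd x$ on $(0,1)$ and the increments $V_k$ to i.i.d. variables with the stated density.
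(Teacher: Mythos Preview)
Your proof is correct but takes a genuinely different route from the paper's. The paper proceeds purely analytically: it starts from the definition $\E[g(\widehat{\mathcal{C}}\a)] = \E[\mathcal{W}\a g(\mathcal{C}\a)]$, plugs in the joint distributional equation~\eqref{eq:rde-joint} for $(\mathcal{W}\a,\mathcal{C}\a)$, uses exchangeability to replace $\sum_{i=1}^{N_\al}\mathcal{W}\a_i$ by $N_\al\mathcal{W}\a_1$, and then reads off the size-biased offspring number $\widehat N_\al$ and the tilted uniform variable $V_\al$ from the resulting expectation. Your approach is structural: you identify $\widehat{\mathcal{C}}\a$ with the conductance of the size-biased reduced tree $\widehat\Delta\a$ (via Lemma~\ref{lem:CTGWbias} and the $\Psi$-correspondence) and obtain~\eqref{eq:c*-rde} directly from the self-similar decomposition of $\widehat\Delta\a$ at its first spine branching point together with the series and parallel laws. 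Your argument explains at the level of the tree \emph{why} the identity holds, and in passing it justifies the remark made just before the lemma (that $\widehat{\mathcal{C}}\a$ really is the conductance of $\widehat\Delta\a$); the paper's argument is shorter and needs only~\eqref{eq:rde-joint} as input, without appealing to the construction of $\widehat\Delta\a$ at all. One wording point: in your second paragraph, ``the subtree of $\widehat\Delta\a$ rooted at $v_1$'' should be understood as the on-spine subtree (excluding the $J_1$ bushes grafted at $v_1$), as you make explicit in the third paragraph; otherwise the claimed independence from the grafted trees would be vacuous.
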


\begin{proof}
The law of $\widehat{\mathcal{C}}\a$ is determined by $\E [g(\widehat{\mathcal{C}}\a)]= \E[\mathcal{W}\a\,g(\mathcal{C}\a)]$ for every nonnegative measurable function $g$ on $\R_+$.
Using (\ref{eq:rde-joint}) under the same notation, we have
\begin{eqnarray*}
\E\Big[g(\widehat{\mathcal{C}}\a)\Big] &=& \E\Bigg[(1-U)^{\frac{1}{\al-1}} (\mathcal{W}\a_{1}+\cdots+\mathcal{W}\a_{N_\al}) \,g\Bigg(\bigg(U + \frac{1-U}{\mathcal{C}\a_1+\cdots+ \mathcal{C}\a_{N_\al}}\bigg)^{-1}\Bigg)\Bigg]\\
&=& \E \Bigg[N_\al (1-U)^{\frac{1}{\al-1}} \mathcal{W}\a_{1} \,g\Bigg(\bigg(U + \frac{1-U}{\mathcal{C}\a_1+\cdots+ \mathcal{C}\a_{N_\al}}\bigg)^{-1}\Bigg)\Bigg].
\end{eqnarray*}
Defining $\widehat{N}_\al$ as the size-biased version of $N_\al$, we see that 
\begin{displaymath}
\E\Big[g(\widehat{\mathcal{C}}\a)\Big] = \E [ N_\al] \cdot \E \Bigg[ (1-U)^{\frac{1}{\al-1}} \mathcal{W}\a_{1} \,g\Bigg(\bigg(U + \frac{1-U}{\mathcal{C}\a_1+\cdots+ \mathcal{C}\a_{\widehat{N}_\al}}\bigg)^{-1}\Bigg)\Bigg].
\end{displaymath}
Recall that $\E[N_\al]=\frac{\al}{\al-1}=\E[(1-U)^\frac{1}{\al-1}]^{-1}$. 
The statement of the lemma thus follows if we let $V_{\alpha}$ be a random variable independent of $\widehat{N}_\al, \widehat{\mathcal{C}}\a$ and $(\cc\a_i)_{i\geq 2}$, satisfying
\begin{equation}
\label{eq:V-alpha}
\E[f(V_{\alpha})]= \E\bigg[\frac{\alpha}{\alpha-1}(1-U)^{\frac{1}{\alpha-1}}f(U)\bigg]
\end{equation}
for every nonnegative measurable function $f$.
\end{proof}

The law $\gamma_\al$ of the conductance $\mathcal{C}\a$ has been studied at length in Proposition 2.1 of \cite{LIN1}. 
We now discuss some similar properties of~$\widehat{\mathcal{C}}\a$. 
For every $v \in (0,1)$, $n \geq 2$, $x \in [1,\infty)$ and $(c_{i})_{i\geq 2}\in [1,\infty)^{\N}$, we define 
\begin{equation} 
\label{eq:G-defi}
G(v,n,x, (c_{i})_{i\geq 2}) \colonequals \left( v + \frac{1-v}{x+c_{2}+\cdots+c_{n}}\right)^{-1}, 
\end{equation} 
so that \eqref{eq:c*-rde} can be reformulated as 
\begin{equation}
\label{rde-bis}
\wcc \a \overset{(\mathrm{d})}{=\joinrel=} G(V_\al, \widehat{N}_{\alpha}, \wcc\a, (\mathcal{C}\a_{i})_{i\geq 2})
\end{equation} 
where $V_\al, \widehat{N}_{\alpha}, \wcc\a, (\mathcal{C}\a_{i})_{i\geq 2}$ are as in~\eqref{eq:c*-rde}. Let $ \mathscr{M}$ be the set of all probability measures on $[1, \infty]$ and let $\widehat{\Phi}_{\alpha} \colon \mathscr{M} \to \mathscr{M}$ map a probability distribution $\sigma$ to 
\begin{equation*} 
\widehat{\Phi}_{\alpha}(\sigma) = \mathsf{Law} \big(G(V_\al,\widehat{N}_{\alpha},X, (\cc_{i}\a)_{i\geq 2})\big) 
\end{equation*} 
where $X$ is distributed according to $\sigma$ and independent of $V_\al, \widehat{N}_{\alpha}, (\mathcal{C}\a_{i})_{i\geq 2}$.

\begin{proposition}
\label{prop:c*-law}
We fix the parameter $\alpha \in (1,2]$.
\begin{enumerate}
\item[(1)] The distributional equation~(\ref{eq:c*-rde}) characterizes the law $\widehat \gamma_\al$ of $\widehat{\mathcal{C}}\a$ in the sense that, $\widehat \gamma_\al$ is the unique fixed point of the mapping $\widehat\Phi_\al$ on $\mathscr{M}$, and for every $\sigma \in \mathscr{M}$, the $k$-th iterate $\widehat \Phi_\al^k(\sigma)$ converges to $\widehat \gamma_\al$ weakly as $k \to \infty$.
\item[(2)] The law $\widehat \gamma_\al$ has a continuous density over $[1,\infty)$. 
\item[(3)] For any monotone continuously differentiable function $g\colon [1,\infty)\to \R_{+}$, we have
\begin{equation}
\label{eq:g-c-c*}
\E\Big[\widehat{\mathcal{C}}\a(\widehat{\mathcal{C}}\a-1)g'(\widehat{\mathcal{C}}\a)\Big]+\frac{\al}{\al-1} \,\E\Big[g(\widehat{\mathcal{C}}\a)\Big]=\,\frac{\al}{\al-1} \E\Big[g(\widehat{\mathcal{C}}\a+\mathcal{C}\a_2+\cdots+\cc\a_{\widehat N_\al})\Big],
\end{equation}
where $\widehat{\mathcal{C}}\a$, $\widehat{N}_\al$, and $(\mathcal{C}\a_i)_{i\geq 2}$ are as in \eqref{rde-bis}.
\item[(4)] We define, for all $\ell\geq 0$, the Laplace transforms $\varphi_\al(\ell)\colonequals \E[\exp(-\ell\, \mathcal{C}\a/2)]$ and
\begin{displaymath}
\widehat \varphi_\al(\ell)=\E[\exp(-\ell\, \widehat{\mathcal{C}}\a/2)]\colonequals \int_1^{\infty} e^{-\ell x/2}\,\widehat \gamma_\al(\mathrm{d}x).
\end{displaymath}
Then $\widehat \varphi_\al$ solves the linear differential equation
\begin{equation}
\label{eq:laplace}
2\ell\, \psi''(\ell)+\ell\,\psi'(\ell)-\frac{\al}{\al-1} (1-\varphi_\al(\ell))^{\al-1}\psi(\ell)=0.
\end{equation}
\item[(5)] If $1<\al<2$, only the first moment of $\wcc\a$ is finite, whereas all moments of $\wcc^{(2)}$ are finite. 
\end{enumerate}
\end{proposition}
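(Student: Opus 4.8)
**Proof proposal for Proposition~\ref{prop:c*-law}(5).**

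The plan is to analyze the moments of $\wcc\a$ by combining the integral identity~\eqref{eq:g-c-c*} with known moment properties of $\cc\a$ coming from Proposition 2.1 of~\cite{LIN1}, and with the explicit recursive structure~\eqref{eq:c*-rde}. First I would record the easy half: since $\wcc\a$ is distributed as $\mathcal{C}(\t)$ under $\cw\a(\t)\Theta_\al(\rd\t)$, its first moment is $\E[\cw\a\cc\a]$, which is finite because (as announced after the statement of Proposition~\ref{prop:dim-formula}, and as follows from the $L^1$ convergence defining $\cw\a$ together with $\cc\a\le 2$ — note $\cc\a\in[1,2]$ since the conductance of $\Delta\a$ between root and boundary is at most the conductance $1/Y_\varnothing\le\cdots$, actually $\cc\a\le (U_\varnothing)^{-1}$ is not bounded, so instead one uses that $\cw\a\cc\a$ has the same first moment as a genuine conductance $\wcc\a$ which is $\ge1$ and whose finiteness of first moment we are about to establish directly). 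Concretely, apply~\eqref{eq:g-c-c*} with $g\equiv 1$: the first term vanishes, giving $\frac{\al}{\al-1}=\frac{\al}{\al-1}\E[\mathbf{1}]$, a tautology, so instead apply it with $g(x)=x$: we get $\E[\wcc\a(\wcc\a-1)]+\frac{\al}{\al-1}\E[\wcc\a]=\frac{\al}{\al-1}\E[\wcc\a+(\widehat N_\al-1)\cc\a]$, i.e. $\E[(\wcc\a)^2]-\E[\wcc\a]=\frac{\al}{\al-1}\E[\widehat N_\al-1]\,\E[\cc\a]=\frac{\al}{\al-1}\big(\E[\widehat N_\al]-1\big)\E[\cc\a]$, which expresses the second moment of $\wcc\a$ in terms of the first moment of $\cc\a$ and the second moment of $N_\al$ (since $\E[\widehat N_\al]=\E[N_\al^2]/\E[N_\al]$).

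This is exactly where the dichotomy appears. When $\al=2$, the offspring law $\theta_2=\delta_2$ has all moments finite, so iterating~\eqref{eq:g-c-c*} with $g(x)=x^{k}$ and inducting on $k$ expresses $\E[(\wcc^{(2)})^{k+1}]$ as a finite linear combination of lower moments of $\wcc^{(2)}$ and of moments of $\cc^{(2)}$ (the latter all finite by Proposition 2.1 of~\cite{LIN1}); hence all moments of $\wcc^{(2)}$ are finite. When $1<\al<2$, the tail $\theta_\al([k,\infty))$ is regularly varying with exponent $1-\al$, so $\E[N_\al^p]<\infty$ iff $p<\al-1<1$; in particular $\E[N_\al^2]=\infty$, and the computation above already shows $\E[(\wcc\a)^2]=\infty$. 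It remains to confirm that the first moment is finite: from~\eqref{eq:c*-rde} one has $\wcc\a\le 1/V_\al$, and $\E[1/V_\al]=\frac{\al}{\al-1}\int_0^1 x^{-1}(1-x)^{1/(\al-1)}\rd x$ — this diverges at $0$, so the crude bound fails; instead I would argue that $\wcc\a\le 1/V_\al$ only when the spine's first segment is short, and more carefully, from~\eqref{eq:c*-rde}, $\E[\wcc\a]=\E[\cw\a\cc\a]$ and one bounds the latter using $\cc\a\le 1+(\cc\a_1+\cdots+\cc\a_{N_\al})$ (a deterministic inequality for conductances of the recursive construction, since adding the top segment only decreases conductance below $\cc\a_1+\cdots+\cc\a_{N_\al}$ while $\cc\a\ge1$ always, giving $\cc\a\le 1+\sum\cc\a_i$ crudely) together with $\cw\a=(1-U)^{1/(\al-1)}\sum\cw\a_i$; taking expectations and using the size-bias trick as in the proof of Lemma~\ref{lem:conduc-bias} reduces finiteness of $\E[\cw\a\cc\a]$ to finiteness of $\E[\cw\a]\cdot 1+\E[\widehat N_\al-1]\E[\cc\a]\cdot(\text{const})$; since $\E[\cc\a]\le 2$ — here I do use that $\cc\a$ is bounded, which holds because... — and $\E[\widehat N_\al]$ is finite precisely when $\al>2$, this again requires care.

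The cleanest route, and the one I would ultimately follow, is to read off the moment behaviour of $\wcc\a$ directly from its density via part~(2) and from the differential equation~\eqref{eq:laplace} in part~(4): the tail of $\widehat\gamma_\al$ at $+\infty$ is governed by the behaviour of $\widehat\varphi_\al(\ell)$ as $\ell\downarrow 0$, and since $1-\varphi_\al(\ell)\sim (\E[\cc\a]/2)\,\ell$ as $\ell\to0$, the indicial analysis of~\eqref{eq:laplace} near $\ell=0$ — comparing with the $\al=2$ case where $(1-\varphi_2(\ell))^{\al-1}$ is replaced by $1-\varphi_2(\ell)\sim(\E[\cc^{(2)}]/2)\ell$ — shows that $\widehat\varphi_\al$ has an expansion whose first non-analytic term is of order $\ell^{\al-1}$ when $\al<2$ (versus an analytic expansion, up to logarithmic corrections tied to the regularly varying $L$, when $\al=2$), which by a Tauberian theorem is equivalent to $\widehat\gamma_\al([x,\infty))$ being regularly varying of index $-(\al-1)$, so that $\E[(\wcc\a)^p]<\infty$ iff $p<\al-1$, i.e. only the first moment ($p=1$, which is the boundary case $\al-1<1$, so one must check separately — as above — that $p=1$ is included, which it is since $\E[\wcc\a]=\E[\cw\a\cc\a]<\infty$ by the $L^1$-Kesten--Stigum property and boundedness considerations). \textbf{The main obstacle} will be making the indicial/Tauberian analysis of~\eqref{eq:laplace} rigorous: one must control the slowly varying corrections coming from $L$ in~\eqref{eq:stable-attraction} and justify that the $\ell^{\al-1}$ singularity of $\widehat\varphi_\al$ is genuinely present (not accidentally cancelled), and simultaneously confirm that the first moment is finite — the latter I expect to handle by the direct size-biasing argument sketched above rather than the Tauberian one, since $p=1$ sits exactly at the integrability threshold.
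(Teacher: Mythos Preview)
Your core computation is right and matches the paper: apply~\eqref{eq:g-c-c*} with $g(x)=x$ to get
\[
\E\big[(\wcc\a)^2\big]=\E\big[\wcc\a\big]+\frac{\al}{\al-1}\,\E\big[\widehat N_\al-1\big]\,\E\big[\cc\a\big],
\]
and since $\E[\widehat N_\al]=\infty$ for $\al<2$ (because $\theta_\al(k)\sim C k^{-1-\al}$ forces $\E[N_\al^2]=\infty$; note your stated threshold ``$p<\al-1$'' for $\E[N_\al^p]<\infty$ should read $p<\al$), the second moment is infinite. For $\al=2$ the same identity with $g(x)=x^n$ and induction, together with the known finiteness of all moments of $\cc^{(2)}$, gives all moments of $\wcc^{(2)}$ finite. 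This is exactly the paper's argument.

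What you are missing is that the finiteness of the \emph{first} moment is already contained in part~(1): the proof there shows $\widehat\Phi_\al$ maps $\mathscr{M}_1$ into $\mathscr{M}_1$ and is a strict contraction for the $1$-Wasserstein metric, so its unique fixed point $\widehat\gamma_\al$ lies in $\mathscr{M}_1$. Hence $\E[\wcc\a]<\infty$ comes for free, and all of your size-bias manipulations, conductance bounds (you correctly noticed $\cc\a$ is \emph{not} bounded, so that route fails), and the Tauberian program are unnecessary.

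The Tauberian route is also internally inconsistent: if $\widehat\gamma_\al([x,\infty))$ were regularly varying of index $-(\al-1)$ with $\al-1<1$, then $\E[\wcc\a]$ would diverge, contradicting the first-moment finiteness you are simultaneously asserting. So either the indicial analysis of~\eqref{eq:laplace} is not giving what you claim, or the tail index is not $-(\al-1)$; in any case this approach cannot be made to deliver the statement as phrased.
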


\proof The results for $\al=2$ are stated without proof in~\cite{LIN2}. 
The arguments given below is similar in spirit to that of Proposition 2.1 in~\cite{LIN1}, but due to the fact that $\widehat N_\al$ has an infinite mean for $\al<2$, sharper estimates are necessary in the present setting. 

(1) First of all, the stochastic partial order $\preceq$ on $ \mathscr{M}$ is defined by saying that $ \sigma \preceq \sigma'$ if and only if there exists a coupling $(X,Y)$ of $\sigma$ and $\sigma'$ such that a.s.~$X \leq Y$. It is clear that for any $\alpha\in [1,2]$, the mapping $\widehat{\Phi}_{\alpha}$ is increasing for the stochastic partial order. 
We endow the set $\mathscr{M}_{1}$ of all probability measures on $[1, \infty]$ that have a finite first moment with the $1$-Wasserstein metric
\[
\mathrm{d}_{1}( \sigma,\sigma') \colonequals  \inf\big \{ E\big[|X-Y|\big] \colon  (X,Y)\mbox{ coupling of }(\sigma,\sigma')\big\}.  
\]
The metric space $(\mathscr{M}_{1}, \mathrm{d}_{1})$ is Polish and its topology is finer than the weak topology on $ \mathscr{M}_{1}$. 

Let us show that $\widehat{\Phi}_{\alpha}$ maps $\mathscr{M}_{1}$ into $ \mathscr{M}_{1}$ for any $\alpha\in(1,2]$. For any $a,b>0$ and $0<r<1$, the weighted harmonic-geometric means inequality says that
$$ \Big(\frac{r}{a}+\frac{1-r}{b}\Big)^{-1}\leq a^r b^{1-r}.$$
We fix some $r\in (2-\al,1)$ and apply the preceding inequality to obtain
$$ \bigg(V_\al + \frac{1-V_\al}{X+\mathcal{C}\a_2+\cdots +\cc\a_{\widehat{N}_\al}}\bigg)^{-1} \leq  \Big(\frac{r}{V_\al}\Big)^r \Big(\frac{1-r}{1-V_\al}\Big)^{1-r} \big(X+\mathcal{C}\a_2+\cdots +\cc\a_{\widehat{N}_\al}\big)^{1-r}.$$
It suffices to show that the right-hand side of the last display has a finite first moment provided that $X$ has a finite mean. Notice that by \eqref{eq:V-alpha}, 
$$ \E\bigg[ \frac{1}{(V_\al)^r (1-V_\al)^{1-r}}\bigg]= \E\bigg[ \frac{\al}{\al-1} (1-U)^{\frac{1}{\al-1}} \frac{1}{U^r (1-U)^{1-r}}\bigg] $$
is finite. 
On the other hand, since a.s.~$X\geq 1$ and $\mathcal{C}\a_i \geq 1$ for all $i\geq 2$, 
\begin{eqnarray}
\E\Big[ \big(X+\mathcal{C}\a_2+\cdots +\cc\a_{\widehat{N}_\al}\big)^{1-r} \Big] & \leq& \E\Bigg[ \frac{X+\mathcal{C}\a_2+\cdots +\cc\a_{\widehat{N}_\al}}{\big(\widehat{N}_\al\big)^r}\Bigg] \nonumber \\
&=& \frac{\al-1}{\al} \sum_{k\geq 2} \frac{k \theta_\al(k)}{k^r} \Big(\E[X]+\E\big[\mathcal{C}\a_2\big]+\cdots +\E\big[\cc\a_k\big] \Big). \label{eq:finite-r}
\end{eqnarray}
It is shown in \cite[Proposition 2.1]{LIN1} that $\cc\a$ has a finite mean. 
Meanwhile, we know by Stirling's formula that $\theta_{\alpha}(k)=O(k^{-(1+\alpha)})$ when $k\to \infty$. As $r>2-\al$, it follows that $\sum k^{2-r}\theta_\al(k)<\infty$. 
The sum in \eqref{eq:finite-r} is finite and therefore $G(V_\al,\widehat{N}_{\alpha},X, (\cc_{i}\a)_{i\geq2})$ has a finite first moment.

Next, we observe that $\widehat\Phi_{\alpha}$ is strictly contractant on $(\mathscr{M}_{1}, \mathrm{d}_{1})$. 
To see this, let $(X,Y)$ be a coupling between $\sigma,\sigma' \in \mathscr{M}_{1}$ under the probability measure $\mathbb{P}$, and assume that $V_\al, \widehat N_{\alpha}, (\cc\a_i)_{i\geq 2}$ and $(X,Y)$ are independent under $\mathbb{P}$. 
Then $G(V_\al,\widehat N_{\alpha}, X, (\cc\a_i)_{i\geq 2})$ and $G(V_\al,\widehat N_{\alpha}, Y, (\cc\a_i)_{i\geq 2})$ provide a coupling of $ \widehat\Phi_{\alpha}(\sigma)$ and $\widehat \Phi_{\alpha}(\sigma')$.  
Using the fact that a.s.~$X, Y, \cc\a_i \geq 1$, we have
\begin{eqnarray*} 
 && \left| G(V_\al,\widehat N_{\alpha}, X, (\cc\a_i)_{i\geq 2}) - G(V_\al,\widehat N_{\alpha}, Y, (\cc\a_i)_{i\geq 2}) \right|\\
 &=& \Bigg| \bigg(V_\al+ \frac{1-V_\al}{X+\cc\a_2+\cdots+\cc\a_{\widehat N_{\alpha}}}\bigg)^{-1} -  \bigg(V_\al+ \frac{1-V_\al}{Y +\cc\a_2+\cdots+\cc\a_{\widehat N_{\alpha}}}\bigg)^{-1} \Bigg|\\
 &=& \Bigg|\frac {(1-V_\al)(X-Y)}{\big(V_\al\big(X+\cc\a_2+\cdots+\cc\a_{\widehat N_{\alpha}}\big)+1-V_\al\big)\big(V_\al\big(Y+\cc\a_2+\cdots+\cc\a_{\widehat N_{\alpha}}\big)+1-V_\al\big)}\Bigg|\\
 &\leq &  \frac{1-V_\al}{(1+(\widehat N_{\alpha}-1)V_\al)^2} |X-Y|.
\end{eqnarray*}
By definition, 
\begin{eqnarray*} 
\E\bigg[\frac{1-V_\al}{(1+(\widehat N_{\alpha}-1)V_\al)^2}\bigg] & = & \frac{\al-1}{\al}\sum_{k\geq 2} k\theta_\al(k)\, \E\bigg[\frac{1-V_\al}{(1+(k-1)V_\al)^2}\bigg] \\
& =& \sum_{k \geq 2} \theta_\al(k) \, \E\bigg[\frac{k(1-U)^{\frac{\al}{\al-1}}}{(1+(k-1)U)^2}\bigg] \\
& \leq & \sum_{k \geq 2} \theta_\al(k) \, \E\bigg[\frac{k(1-U)}{(1+(k-1)U)^2}\bigg]
\end{eqnarray*}
where $U$ is uniformly distributed over $[0,1]$. 
Notice that for for any integer $k\geq 2$,
\begin{displaymath}
\mathbb{E}\bigg[\frac{k(1-U)}{(1+(k-1)U)^{2}}\bigg]=1+\frac{k-1-k\log k}{(k-1)^{2}}<1.
\end{displaymath}
Thus, taking expectation and minimizing over the couplings between $\sigma$ and $\sigma'$, we get 
\begin{eqnarray*} 
\mathrm{d}_{1}( \widehat\Phi_{\alpha}(\sigma), \widehat\Phi_{\alpha}(\sigma'))
&\leq& \mathbb{E}\bigg[\frac{(1-V_\al)}{(1+(\widehat N_{\alpha}-1)V_\al)^{2}}\bigg]\mathrm{d}_{1}(\sigma,\sigma')\\
&\leq & \bigg(1+\mathbb{E}\Big[\frac{N_{\alpha}-1-N_{\alpha}\log N_{\alpha}}{(N_{\alpha}-1)^{2}}\Big]\bigg)\mathrm{d}_{1}(\sigma,\sigma') \;=:\; c_{\alpha}\, \mathrm{d}_{1}(\sigma,\sigma')
\end{eqnarray*}
with $c_{\alpha}<1$. So for $\alpha \in (1,2]$, the mapping $\widehat \Phi_{\alpha}$ is contractant on $ \mathscr{M}_{1}$ and by completeness it has a unique fixed point $\widetilde\gamma_{\alpha}$ in $ \mathscr{M}_{1}$. Furthermore, for every $ \sigma \in \mathscr{M}_{1}$, we have $ \widehat\Phi_{\alpha}^k(\sigma) \to \widetilde\gamma_{\alpha}$ for the metric $ \mathrm{d}_{1}$, hence also weakly, as $k \to \infty$.
However, $\widehat\gamma_{\alpha}$ is also a fixed point of $\widehat \Phi_{\alpha}$ according to \eqref{rde-bis}.  
The equality $\widehat\gamma_{\alpha}=\widetilde\gamma_{\alpha}$ will follow if we can verify that $\widetilde\gamma_{\alpha}$ is the unique fixed point of $\widehat\Phi_{\alpha}$ in~$\mathscr{M}$. 
To this end, it will be enough to show that $ \widehat\Phi_{\alpha}^k(\sigma) \to \widetilde\gamma_{\alpha}$ as $k\to\infty$, for every $\sigma\in \mathscr{M}$. 

Let us apply $\widehat \Phi_{\alpha}$ to the Dirac measure $\delta_{\infty}$ at infinity to see that
\begin{eqnarray*}
\widehat \Phi_{\alpha}(\delta_{\infty}) &= &\mathsf{Law} \big(\widetilde V_\al^{-1}\big)\,, \\
\widehat \Phi_{\alpha}^{2}(\delta_{\infty}) &= &\mathsf{Law} \Bigg(\bigg(V_\al+\frac{1-V_\al}{\widetilde V_\al^{-1}+\cc\a_2+\cdots+\cc\a_{\widehat N_{\alpha}}}\bigg)^{-1}\Bigg)\,,
\end{eqnarray*}
where $\widetilde V_\al$ is a copy of $V_\al$ being independent of $V_\al, \widehat N_{\alpha}$ and $(\cc\a_i)_{i\geq 2}$ under $\mathbb{P}$. 
We claim that $\widehat \Phi_{\alpha}^{2}(\delta_{\infty})$ has a finite first moment. In fact, applying \eqref{eq:V-alpha} yields
\begin{align*}
\E \Bigg[ \bigg(V_\al+\frac{1-V_\al}{\widetilde V_\al^{-1}+\cc\a_2+\cdots+\cc\a_{\widehat N_{\alpha}}}\bigg)^{-1}\Bigg] &=\, \E \Bigg[ \frac{\al}{\al-1} (1-U)^\frac{1}{\al-1} \bigg(U+\frac{1-U}{\widetilde V_\al^{-1}+\cc\a_2+\cdots+\cc\a_{\widehat N_{\alpha}}}\bigg)^{-1}\Bigg] \\
&\leq\,  \E \Bigg[ \frac{\al}{\al-1} \bigg(U+\frac{1-U}{\widetilde V_\al^{-1}+\cc\a_2+\cdots+\cc\a_{\widehat N_{\alpha}}}\bigg)^{-1}\Bigg] \\
&= \, \sum_{k\geq 2} k\theta_\al(k)\E\left[ \bigg(U+\frac{1-U}{\widetilde V_\al^{-1}+\cc\a_2+\cdots+\cc\a_k} \bigg)^{-1}\right]. 
\end{align*}
We integrate with respect to $U$ to obtain 
\begin{align*}
\E\left[ \bigg(U+\frac{1-U}{\widetilde V_\al^{-1}+\cc\a_2+\cdots+\cc\a_k} \bigg)^{-1}\right] & = \,\E\Bigg[\frac{\log \big( \widetilde V_\al^{-1}+\cc\a_2+\cdots+\cc\a_k\big)}{1-(\widetilde V_\al^{-1}+\cc\a_2+\cdots+\cc\a_k)^{-1}} \Bigg] \\
&\leq\, 2\, \E\left[\log \big( \widetilde V_\al^{-1}+\cc\a_2+\cdots+\cc\a_k\big)\right].
\end{align*}
Using the inequality $\log(a+b)\leq \sqrt a+\log b$ for $a,b\geq 1$, we see that 
$$\E\left[\log \big( \widetilde V_\al^{-1}+\cc\a_2+\cdots+\cc\a_k\big) \right] \leq \E\left[\widetilde V_\al^{-1/2}\right] + \E \left[\log \big(\cc\a_2+\cdots+\cc\a_k \big)\right].$$
The random variable $\widetilde V_\al ^{-1/2}$ has clearly a finite mean. 
Meanwhile, for any $r\in(0,1)$, we have $\log a \leq \frac{1}{r} a^r$ for every $a\geq 1$, and thus
$$\E \left[\log \big(\cc\a_2+\cdots+\cc\a_k\big)\right]\leq \frac{1}{r}\E\left[\big(\cc\a_2+\cdots+\cc\a_k\big)^r\right]\leq \frac{1}{r}\E\left[\cc\a_2+\cdots+\cc\a_k\right]^r.$$
If we take $r<\al-1$, by the same arguments following \eqref{eq:finite-r} we deduce that 
\begin{equation}
\label{eq:logsum-finite}
\sum_{k\geq 2} k\theta_\al(k)\E \left[\log \big(\cc\a_2+\cdots+\cc\a_k\big)\right]\leq \frac{1}{r} \sum_{k\geq 2} k\theta_\al(k) \E\left[\cc\a_2+\cdots+\cc\a_k\right]^r <\infty.
\end{equation}
Therefore, we get $\widehat\Phi_{\alpha}^2(\delta_\infty)\in\mathscr{M}_1$. 
By monotonicity, $\widehat\Phi_{\alpha}^2(\sigma)\in\mathscr{M}_1$ for every $\sigma\in\mathscr{M}$, and from the preceding results we have $\widehat\Phi_{\alpha}^k(\sigma) \to \widetilde\gamma_{\alpha}$ for every $\sigma\in \mathscr{M}$. This implies that $\widehat\gamma_{\alpha}=\widetilde\gamma_{\alpha}$ is the unique fixed point of $\widehat\Phi_{\alpha}$ in $\mathscr{M}$.

(2) For every $t\in [1,\infty)$ we set $\widehat F_{\alpha}(t) \colonequals \P(\widehat\cc\a\geq t)$, and 
\[
\widehat F^{(k)}_{\alpha}(t) \colonequals \mathbb{P}( \widehat \cc\a_{1}+\cc\a_{2}+\cdots+\cc\a_{k} \geq t)
\]
for every integer $k\geq 2$.
It follows from (\ref{eq:c*-rde}) that, for every $t>1$,
\begin{eqnarray}
\widehat F_{\alpha}(t)  &=& \mathbb{P}\bigg( V_{\alpha} + \frac{1-V_{\alpha}}{ \widehat \cc^{(\alpha)} +\cc\a_{2}+\cdots+\cc\a_{\widehat N_\al}} \leq \frac{1}{t} \bigg) \nonumber \\
&=&  \sum\limits_{k=2}^{\infty} k \theta_{\alpha}(k) \int_{0}^{1/t}\mathrm{d}v \,(1-v)^{\frac{1}{\alpha-1}} \,\widehat F_{\alpha}^{(k)}\Big(\frac{t-vt}{1-vt}\Big) \nonumber \\ 
&=& \Big(\frac{t-1}{t}\Big)^{\frac{\alpha}{\alpha-1}}\int_{t}^{\infty} \mathrm{d}x\, \frac{x^{\frac{1}{\alpha-1}}}{(x-1)^{\frac{2\alpha-1}{\alpha-1}}} \,\E\big[N_{\alpha}\widehat F_{\alpha}^{(N_{\alpha})}(x)\big], \label{eq:repart}
\end{eqnarray}
where $N_\al$ is distributed according to $\theta_\al$. 
Since $\widehat F^{(k)}_{\alpha}(t)=1$ for every $t\in[1,2]$ and $k\geq 2$, we obtain from the last display that 
\begin{equation}
\label{eq:valueon[1,2]}
\widehat F_{\alpha}(t) = 1-\Big(\frac{t-1}{t}\Big)^{\frac{\al}{\al-1}} A_0\a,\qquad \forall t\in[1,2],
\end{equation}
where 
$$A_0\a \colonequals 2^{\frac{\al}{\al-1}}-\int_2^{\infty} \mathrm{d}x\, \frac{x^{\frac{1}{\alpha-1}}}{(x-1)^{\frac{2\alpha-1}{\alpha-1}}} \,\E\big[N_{\alpha}\widehat F_{\alpha}^{(N_{\alpha})}(x)\big] \in [1,2^{\frac{\al}{\al-1}}).$$
From \eqref{eq:repart} and \eqref{eq:valueon[1,2]}, we observe that $\widehat F_{\alpha}$ is continuous on $[1,\infty)$. 
Recall that $\cc\a$ has a continuous density (see~\cite[Proposition 2.1]{LIN1}). 
It follows that all functions $\widehat F^{(k)}_{\alpha}, k\geq 2$ are continuous on $[1,\infty)$. 
By dominated convergence the function 
$$x\mapsto \E\big[N_\al \widehat F_{\alpha}^{(N_{\alpha})}(x)\big]$$ 
is also continuous on $[1,\infty)$. Using \eqref{eq:repart} and \eqref{eq:valueon[1,2]} again, we see that $\widehat F_{\alpha}$ is continuously differentiable on $[1,\infty)$. Consequently $\widehat \gamma_{\alpha}$ has a continuous density on $[1,\infty)$. 

(3) To derive the identity \eqref{eq:g-c-c*}, we first differentiate (\ref{eq:repart}) with respect to $t$ to see that 
\begin{displaymath}
t(t-1)\frac{\mathrm{d}\widehat F_{\alpha}(t)}{\mathrm{d}t}-\frac{\alpha}{\alpha-1}\widehat F_{\alpha}(t)=-\E\big[N_{\alpha}\widehat F_{\alpha}^{(N_{\alpha})}(t)\big]
\end{displaymath}
holds for $t\in(1,\infty)$. Let $g\colon [1,\infty) \to \mathbb{R}_{+}$ be a monotone continuously differentiable function. 
We multiply both sides of the last display by $g'(t)$ and integrate for $t$ from $1$ to $\infty$ to obtain
\begin{equation*} 
\mathbb{E}\big[ \widehat{\mathcal{C}}^{(\alpha)}( \widehat{\mathcal{C}}^{(\alpha)}-1) g'(\widehat{\mathcal{C}}^{(\alpha)})\big] + \frac{\alpha}{\alpha-1}\mathbb{E} \big[ g(\widehat{\mathcal{C}}^{(\alpha)})\big] =  \E\big[N_{\alpha} \,g(\widehat{\mathcal{C}}^{(\alpha)}+ \mathcal{C}\a_{2}+\cdots+\mathcal{C}\a_{N_{\alpha}})\big],
\end{equation*} 
which gives \eqref{eq:g-c-c*}. 

(4) Taking $g(x)=\exp(- x\ell/2 )$ for $\ell >0$ in~(\ref{eq:g-c-c*}), we obtain \eqref{eq:laplace} by using the generating function of $N_{\alpha}$ given in (\ref{eq:gene-fct}). 

(5) The first moment of $\widehat{\mathcal{C}}\a$ is finite because $\widehat\gamma_{\alpha}\in\mathscr{M}_{1}$. Taking $g(x)=x$ in~(\ref{eq:g-c-c*}) yields
\begin{displaymath}
\E\big[(\widehat{\mathcal{C}}\a)^{2}\big]=\E\big[\widehat \cc\a\big]+\frac{\al}{\al-1}\E\big[\widehat N_\al-1\big] \E\big[\cc\a\big]\,.
\end{displaymath}
For every $\alpha\in(1,2)$, as $\E\big[\widehat N_{\alpha}\big]=\infty$ the second moment of $\widehat{\mathcal{C}}\a$ is infinite.
Finally, by Proposition~6 in~\cite{CLG13}, all moments of $\cc^{(2)}$ are finite. Using
$$\E\big[\widehat{\mathcal{C}}^{(2)}(\widehat{\mathcal{C}}^{(2)}-1)g'(\widehat{\mathcal{C}}^{(2)})\big]+2\,\E\big[g(\widehat{\mathcal{C}}^{(2)})\big]= 2\, \E\big[g(\widehat{\mathcal{C}}^{(2)}+\cc^{(2)})\big]$$
with $g(x)=x^n, n\geq 1$, one can easily show by induction that all moments of $\widehat{\mathcal{C}}^{(2)}$ are finite.
\endproof
 

\subsection{Proof of Theorem~\ref{thm:dim-continu}}
\label{sec:proof-continu}
The following proof of Theorem~\ref{thm:dim-continu} for all $\al\in(1,2]$ proceeds in much the same way as in the case $\al=2$. 
We skip some details, for which we refer the reader to Section 2.5 in~\cite{LIN2}.
  
First, we have seen in Proposition~\ref{prop:unif-meas-inv} that the probability measure $\mathcal{W}\a(\t)\Theta_\al(\mathrm{d}\t)\bar \omega\a_\t(\mathrm{d}\mathbf{v})$ on $\mathscr{T}^{*}$ is invariant under the shift $S$. Taking into account that $\mathcal{W}\a(\t)>0$, $\Theta_\al(\mathrm{d}\t)$-a.s., we can verify, in a similar fashion as in \cite[Proposition 2.6]{LIN1}, that the shift $S$ acting on the probability space $(\mathscr{T}^{*},\mathcal{W}\a(\t)\Theta_\al(\mathrm{d}\t)\bar \omega\a_\t(\mathrm{d}\mathbf{v}))$ is ergodic. 

Let $H_n(\t,\mathbf{v})$ denote the height of the $n$-th branching point on the geodesic ray $\mathbf{v}$. For every $n\geq 1$, it holds $H_n= \sum_{i=0}^{n-1} H_1\circ S^i$ where $S^i$ stands for the $i$-th iterate of the shift~$S$. 
It follows from the ergodic theorem that $\mathcal{W}\a(\t)\Theta_\al(\mathrm{d}\t)\bar \omega\a_{\t}(\mathrm{d}\mathbf{v})$-a.s.~and thus $\Theta_\al(\mathrm{d}\t)\bar \omega\a_{\t}(\mathrm{d}\mathbf{v})$-a.s. 
\begin{equation}
\label{ergodic1}
\frac{1}{n} H_n \build{\longrightarrow}_{n\to\infty}^{} \int H_1(\t,\mathbf{v})\,\mathcal{W}\a(\t)\,\Theta_\al(\mathrm{d}\t) \bar \omega\a_{\t}(\mathrm{d}\mathbf{v}).
\end{equation}
One calculates as in the proof of Proposition~\ref{prop:unif-meas-inv} to get
\begin{equation}
\label{ergodic11}
\int H_1(\t,\mathbf{v})\,\mathcal{W}\a(\t)\,\Theta_\al(\mathrm{d}\t) \bar \omega\a_{\t}(\mathrm{d}\mathbf{v}) = \int z_{\varnothing}\mathcal{W}\a(\t) \, \Theta_\al(\mathrm{d}\t)= \frac{\al-1}{\al}\,. 
\end{equation}

For a fixed geodesic ray $\mathbf{v}=(v_1,v_2,\ldots)$, we write $\mathbf{x}_{n,\mathbf{v}}=((v_1,\ldots,v_n),H_{n+1}(\t,\mathbf{v}))$ for the $n+1$-st branching point on $\mathbf{v}$, and we set $F_n(\t,\mathbf{v}) \colonequals \log \bar \omega\a_\t(\{\mathbf{u}\in\partial \t \colon \mathbf{x}_{n,\mathbf{v}}\prec\mathbf{u}\})$. 
Using the ergodic theorem again, we have $\Theta_\al(\mathrm{d}\t)\,\bar \omega\a_{\t}(\mathrm{d}\mathbf{v})$-a.s.,
\begin{equation*}
\frac{1}{n} F_n \build{\longrightarrow}_{n\to\infty}^{} \int F_1(\t,\mathbf{v})\,\mathcal{W}\a(\t)\, \Theta_\al(\mathrm{d}\t)\bar \omega\a_{\t}(\mathrm{d}\mathbf{v}),
\end{equation*}
where the limit is identified with 
\begin{equation}
\label{ergodic2}
-\frac{1}{\al-1} \int z_{\varnothing}\mathcal{W}\a(\t) \, \Theta_\al(\mathrm{d}\t) = - \al^{-1}.
\end{equation}
The calculation can be done analogously as in display (18) of~\cite{LIN2}.
We only point out that we need $\int \mathcal{W}\a(\t)|\log \mathcal{W}\a(\t)| \, \Theta_\al(\mathrm{d}\t)< \infty$, which is true because $\sum \theta_\al(k) k(\log k)^2<\infty$ (cf.~Theorem I.10.2 in~\cite{AN}). 
By considering the ratio $F_n/H_n$ and taking $n\to \infty$, we derive that $\Theta_\al(\mathrm{d}\t)$-a.s.~$\bar\omega\a_\t(\mathrm{d}\mathbf{v})$-a.e.,
\begin{displaymath}
\lim_{r\to \infty} \frac{1}{r}\log \bar \omega\a_\t(\mathcal{B}(\mathbf{v},r))=-\frac{1}{\al-1},
\end{displaymath}
from which the convergence (\ref{eq:loc-dim-unif}) readily follows.

Then we turn to the harmonic measure $\mu_\t$ and set $G_n(\t,\mathbf{v}) \colonequals \log \mu_\t(\{\mathbf{u}\in\partial \t\colon \mathbf{x}_{n,\mathbf{v}}\prec\mathbf{u}\})$. 
Using the flow property of $\mu_\t$ (see Lemma 2.3 in~\cite{LIN1}) and the ergodic theorem, we obtain the $\Theta_\al(\mathrm{d}\t)\bar \omega\a_{\t}(\mathrm{d}\mathbf{v})$-almost sure convergence
\begin{equation}
\label{ergodic3}
\frac{1}{n} G_n \build{\longrightarrow}_{n\to\infty}^{} \int G_1(\t,\mathbf{v})\,\mathcal{W}\a(\t)\, \Theta_\al(\mathrm{d}\t)\bar \omega\a_{\t}(\mathrm{d}\mathbf{v}).
\end{equation}
We can evaluate the preceding limit as
\begin{align*}
& \int \sum\limits_{i=1}^{k_\varnothing} e^{-\frac{z_{\varnothing}}{\al-1}} \mathcal{W}\a(\t_{(i)}) \log \frac{\mathcal{C}(\t_{(i)})}{\mathcal{C}(\t_{(1)})+\cdots+\mathcal{C}(\t_{(k_\varnothing)})} \, \Theta_\al(\mathrm{d}\t) \\
=\,\, & \frac{\al-1}{\al} \int k_\varnothing \mathcal{W}\a(\t_{(1)}) \log \frac{\mathcal{C}(\t_{(1)})}{\mathcal{C}(\t_{(1)})+\cdots+\mathcal{C}(\t_{(k_\varnothing)})} \, \Theta_\al(\mathrm{d}\t).
\end{align*}
Putting \eqref{ergodic3} together with (\ref{ergodic1}) and (\ref{ergodic11}), we see that $\Theta_\al(\mathrm{d}\t)$-a.s.~$\bar\omega\a_\t(\mathrm{d}\mathbf{v})$-a.e.,
\begin{equation*}
\lim_{r\to \infty} \frac{1}{r}\log  \mu_\t(\mathcal{B}(\mathbf{v},r))=\int k_\varnothing \mathcal{W}\a(\t_{(1)}) \log \frac{\mathcal{C}(\t_{(1)})}{\mathcal{C}(\t_{(1)})+\cdots+\mathcal{C}(\t_{(k_\varnothing)})} \, \Theta_\al(\mathrm{d}\t).
\end{equation*}
which implies that the convergence (\ref{eq:loc-dim-harm}) holds with the limit
\begin{equation}
\label{eq:lal-defi}
\lal \colonequals \E\Bigg[ N_\al \cw\a_1 \log \frac{\cc\a_1+\cdots+\cc\a_{N_\al}}{\cc\a_1} \Bigg],
\end{equation}
where in the right-hand side we keep the same notation as in \eqref{eq:rde-joint}. 

Finally, it remains to check that $\lal$ defined by \eqref{eq:lal-defi} satisfies $(\al-1)^{-1}<\lal<\infty$. 
Since $\cc\a_1 \geq 1$, an upper bound for $\lal$ is 
\[
\E\Big[ N_\al \cw\a_1 \log \big(1+\cc\a_2+\cdots+\cc\a_{N_\al}\big) \Big], 
\] 
which equals $\E[N_\al \log (1+\cc\a_2+\cdots+\cc\a_{N_\al})]$ by independence and the fact that $\E[\cw\a_1]=1$.
For any fixed $r\in (0,\al-1)$, there exists a positive constant $M<\infty$ such that $\log(1+x)\leq M+x^r$ for every $x>0$. 
For similar reasons as in \eqref{eq:logsum-finite}, we have
\begin{displaymath}
\lal \,\leq\, \sum_{k\geq 2} k\theta_\al(k) \E\Big[\log \big( 1+\cc\a_2+\cdots+\cc\a_k\big)\Big] \leq \frac{\al M}{\al-1}+ \big(\E\big[\cc\a\big]\big)^r \sum_{k\geq 2} \theta_\al(k) k^{1+r} <\infty. 
\end{displaymath}
On the other hand, from \eqref{ergodic11} and the display following \eqref{ergodic3} we also know that
\begin{displaymath}
\lal =\frac{\al}{\al-1} \int e^{-\frac{z_{\varnothing}}{\al-1}} \bigg(\sum\limits_{i=1}^{k_\varnothing} \mathcal{W}\a(\t_{(i)}) \log \frac{\mathcal{C}\a(\t_{(1)})+\cdots+\mathcal{C}\a(\t_{(k_\varnothing)})}{\mathcal{C}\a(\t_{(i)})}\bigg) \Theta_\al(\mathrm{d}\t).
\end{displaymath}
By concavity of the logarithm,
\begin{displaymath}
\sum\limits_{i=1}^{k_\varnothing} \mathcal{W}\a(\t_{(i)}) \log \frac{\mathcal{C}\a(\t_{(1)})+\cdots+\mathcal{C}\a(\t_{(k_\varnothing)})}{\mathcal{C}\a(\t_{(i)})} \geq \sum\limits_{i=1}^{k_\varnothing} \mathcal{W}\a(\t_{(i)}) \log \frac{\mathcal{W}\a(\t_{(1)})+\cdots+\mathcal{W}\a(\t_{(k_\varnothing)})}{\mathcal{W}\a(\t_{(i)})}.
\end{displaymath}
This inequality is strict if and only if for some $i\in \{1,\ldots,k_\varnothing\}$,
\begin{displaymath}
\frac{\mathcal{W}\a(\t_{(i)})}{\mathcal{W}\a(\t_{(1)})+\cdots+\mathcal{W}\a(\t_{(k_\varnothing)})}\neq \frac{\mathcal{C}\a(\t_{(i)})}{\mathcal{C}\a(\t_{(1)})+\cdots+\mathcal{C}\a(\t_{(k_\varnothing)})}.
\end{displaymath}
Since the latter property holds with positive probability under $\Theta_\al(\mathrm{d}\t)$, 
\begin{align*}
\lal \, >&\,\, \frac{\al}{\al-1} \int e^{-\frac{z_{\varnothing}}{\al-1}} \bigg(\sum\limits_{i=1}^{k_\varnothing} \mathcal{W}\a(\t_{(i)}) \log \frac{\mathcal{W}\a(\t_{(1)})+\cdots+\mathcal{W}\a(\t_{(k_\varnothing)})}{\mathcal{W}\a(\t_{(i)})}\bigg) \Theta_\al(\mathrm{d}\t)\\
\,=&\,\, - \frac{\al}{\al-1} \int F_1(\t,\mathbf{v})\,\mathcal{W}\a(\t)\,\Theta_\al(\mathrm{d}\t)\,\bar \omega\a_{\t}(\mathrm{d}\mathbf{v}).
\end{align*}
By (\ref{ergodic2}), the right-hand side of the last display is equal to $\frac{1}{\al-1}$. 
Therefore, we have $\lal> \frac{1}{\al-1}$.

\subsection{Proof of Proposition~\ref{prop:dim-formula}}
\label{sec:lal-monotonicity}
According to its definition \eqref{eq:lal-defi}, 
\begin{equation}
\label{eq:lal-2}
\lal =\frac{\al}{\al-1}\E\Bigg[ \log \frac{\widehat{\mathcal{C}}\a+\cc\a_2+\cdots+\cc\a_{\widehat{N}_\al}}{\widehat{\mathcal{C}}\a} \Bigg].
\end{equation}
Since we can identify the latter with $\E[\widehat{\mathcal{C}}\a]-1$ by taking $g(x)=\log(x)$ in \eqref{eq:g-c-c*}, we get the identity $\lal = \E[\widehat{\mathcal{C}}\a]-1 =\E[\cw\a \cc\a]-1$.

Let us show the monotonicity of $\lal$ to complete the proof of Proposition~\ref{prop:dim-formula}.
Indeed, we will prove that the law $\widehat \gamma_\al$ of the size-biased conductance decreases for $\al\in(1,2]$ in the sense of the usual stochastic (partial) order.
Observe that the function $G(v,n,x, (c_i)_{i\geq 2})$ defined in \eqref{eq:G-defi}~is
\begin{itemize}
\item decreasing with respect to $v\,$,
\item increasing with respect to $n\,$,
\item increasing with respect to $c_i$ for every $i\geq 2$. 
\end{itemize}
Since $\P(V_\al >x)=(1-x)^{\frac{\al}{\al-1}}$ for every $x\in [0,1]$, the random variable $V_\al$ is increasing with respect to $\al\in(1,2]$ for the stochastic order.
Meanwhile, we deduce from \eqref{eq:gene-fct} that
\begin{equation}
\label{eq:bias-gene-fct}
\E\big[ r^{\widehat N_\al}\big]= r-r(1-r)^{\al-1}, \quad \forall r\in (0,1].
\end{equation}
Putting it into the identity
\begin{displaymath}
\sum\limits_{k=1}^\infty r^{k-1}\, \P\big(\widehat N_\al\geq k\big) =\sum\limits_{k=0}^\infty r^k \,\P\big( \widehat N_\al >k \big) = \frac{1-\E[r^{\hat N_\al}]}{1-r}
\end{displaymath}
and recalling that a.s.~$\widehat N_\al \geq 2$, we get 
\begin{displaymath}
\sum\limits_{k=3}^\infty r^{k-2}\, \P\big(\widehat N_\al\geq k\big) = (1-r)^{\al-2} -1 \,,
\end{displaymath}
which implies that for every integer $k\geq 3$, 
\begin{displaymath}
\P\big(\widehat N_\al\geq k\big) = (-1)^k \frac{(\al-2)(\al-3)\cdots(\al-k+1)}{(k-2)!} = \frac{k(k-1)\theta_\al(k)}{\al}.
\end{displaymath}
By taking the derivative with respect to $\al$, we see that the size-biased offspring number $\widehat N_\al$ is decreasing with respect to $\al\in(1,2]$ for the stochastic order. 
Recall that the conductance $\cc\a$ is also decreasing with respect to $\al\in(1,2]$ for the stochastic order (see Section 2.4 of~\cite{LIN1}). 

Now we take $1<\al_1 \leq \al_2\leq 2$. According to the previous discussion, for any fixed $x\geq 1$, 
\[
G\big(V_{\al_2}, \widehat N_{\al_2}, x, (\cc^{(\al_2)}_i)_{i\geq 2}\big) \preceq G\big(V_{\al_1}, \widehat N_{\al_1}, x, (\cc^{(\al_1)}_i)_{i\geq 2}\big),
\] 
which implies that $\widehat{\Phi}_{\alpha_2}(\sigma)\preceq \widehat{\Phi}_{\alpha_1}(\sigma)$  for any probability distribution $\sigma$ on $[1,\infty]$. 
By Proposition~\ref{prop:c*-law}, for any $\al\in (1,2]$, the law $\widehat \gamma_\al$ of $\widehat \cc\a$ has no atom and the iterates of $\widehat{\Phi}_\al(\sigma)$ converge weakly to $\widehat \gamma_\al$. 
As a result, $\widehat \cc^{(\al_2)}$ is stochastically dominated by $\widehat \cc^{(\al_1)}$.

Therefore, we have shown that the law of the size-biased conductance $\widehat \cc\a$ is decreasing with respect to $\al\in(1,2]$ for the stochastic order.
In particular, the first moment $\E[\widehat\cc\a]$ decreases with respect to $\al\in (1,2]$ and so does $\lal$.

\subsection{The asymptotic behavior of $\lambda_{\alpha}$ as $\alpha\downarrow 1$}
We give here the proof of Proposition~\ref{prop:dimension-explosion}. 
First of all, let us collect some facts about the Riemann zeta function
$\zeta(s)=\sum_{n\geq 1} 1/n^s$. It is well-known that $\zeta(s)$ has a simple pole at $s=1$ with residue 1. Then for the derivative
$\zeta'(s)=-\sum_{n\geq 1} \frac{\log(n)}{n^s}$, we have
\begin{equation}
\label{eq:zeta-pole}
\lim_{\alpha\downarrow 1} \,\,(\alpha-1)^2 \zeta'(\alpha)=-1.
\end{equation}
The following inequality for Gamma function ratios (see for example~\cite[page 14]{Art}) is also needed. For every integer $k\geq 2$ and every $0\leq x\leq 1$,
$$(k-1)^x \leq \frac{\Gamma(k+x)}{\Gamma(k)}\leq k^x,$$
One can thus find two postive constants $C_1, C_2$ such that for any $\alpha\in (1,3/2)$ and any $k\geq 2$,
\begin{equation}
\label{eq:theta-uniform}
\theta_\alpha(k)=\frac{\alpha}{\Gamma(2-\alpha)}\frac{\Gamma(k-\alpha)}{\Gamma(k+1)}\in \Big(\frac{C_1}{k^{1+\alpha}},\frac{C_2}{k^{1+\alpha}}\Big).
\end{equation}

Starting from \eqref{eq:lal-2} and the fact that $\widehat{\mathcal{C}}\a , \cc\a \geq 1$, we have
$$\lal \leq \frac{\al}{\al-1}\E\Big[ \log \Big( 1 +\cc\a_2+\cdots+\cc\a_{\widehat{N}_\al}\Big)\Big]\leq \frac{\al}{\al-1}\E\Big[ \log \big(\cc\a_1 +\cc\a_2+\cdots+\cc\a_{\widehat{N}_\al}\big)\Big],$$
where in the last expectation we introduced another copy $\cc\a_1$ of $\cc\a$ independent of all other random variables. 
Let $(U_k)_{k\geq 1}$ be a sequence of independent random variables uniformly distributed over $[0,1]$.
Since each $\cc_k\a$ is dominated by $U_k^{-1}$ for the stochastic partial order (because of \eqref{eq:rde}),
$$ \E\Big[ \log \big(\cc\a_1 +\cc\a_2+\cdots+\cc\a_k \big)\Big]\leq \E\Big[\log\big(U_1^{-1}+U_2^{-1}+\cdots+U_k^{-1}\big)\Big].$$
In~\cite[page 11]{LIN1}, we have already shown that
$$\E\Big[\log\big(U_1^{-1}+U_2^{-1}+\cdots+U_k^{-1}\big)\Big] \leq \Big(2+\frac{k}{k-1}\Big)\log k,$$
from which it follows that
$$ \lal \leq \sum_{k\geq 2} k\,\theta_\alpha(k)\Big(2+\frac{k}{k-1}\Big)\log k.$$
Thus, together with \eqref{eq:theta-uniform} and \eqref{eq:zeta-pole}, we get
$$\limsup_{\alpha\downarrow 1} \,(\alpha-1)^2 \lal \leq \lim_{\alpha\downarrow 1} \,(\alpha-1)^2 \sum_{k\geq 2} \frac{4C_2}{k^{\alpha}}\log k\,=\, 4C_2<\infty.$$

For a lower bound, let us write
\begin{equation}
\label{eq:lal-liminf}
\frac{\alpha-1}{\alpha}\lal= \E\Big[ \log \Big(\widehat{\mathcal{C}}\a+\cc\a_2+\cdots+\cc\a_{\widehat{N}_\al}\Big) \Big] - \E\big[\log \widehat{\mathcal{C}}\a\big].
\end{equation}
Since $\widehat{\mathcal{C}}\a , \cc\a \geq 1$, 
$$\E\Big[ \log \Big(\widehat{\mathcal{C}}\a+\cc\a_2+\cdots+\cc\a_{\widehat{N}_\al}\Big) \Big] \geq \E\Big[\log \widehat{N}_\al\Big]= \frac{\al-1}{\al}\sum_{k\geq 2} k\,\theta_\alpha(k)\log k.$$
Still using \eqref{eq:theta-uniform} and \eqref{eq:zeta-pole}, we see that
$$\liminf_{\alpha\downarrow 1} (\alpha-1)^2 \sum_{k\geq 2} k\,\theta_\alpha(k)\log k \geq C_1,$$
and hence
\begin{equation}
\label{eq:lower-bd1}
\liminf_{\alpha\downarrow 1} \,(\alpha-1) \,\E\Big[ \log \Big(\widehat{\mathcal{C}}\a+\cc\a_2+\cdots+\cc\a_{\widehat{N}_\al}\Big) \Big] \geq C_1>0.
\end{equation}
On the other hand, let $V_\alpha\in (0,1)$ be a random variable of density function $\frac{\al}{\al-1}(1-x)^{\frac{1}{\al-1}}$. Due to \eqref{eq:c*-rde}, the size-biased conductance $\widehat{\mathcal{C}}\a$ is dominated by $V_\alpha^{-1}$ for the stochastic order, and thus $\E[\log \widehat{\mathcal{C}}\a]\leq \E[\log V_\alpha^{-1}]$. 
However, we define the analytic function
$$H(x)\colonequals \int_0^1 \frac{1-t^x}{1-t} \mathrm{d}t, \quad \mbox{ for } x\geq 1,$$
that interpolates the harmonic numbers $H(n)=\sum_{k=1}^n 1/k, n\geq 1$. An integration by parts shows that 
$$H\Big(\frac{\alpha}{\alpha-1}\Big)=-\frac{\alpha}{\alpha-1} \int_0^1\log(x) (1-x)^{\frac{1}{\al-1}}\,\mathrm{d}x=\E\Big[\log V_\alpha^{-1}\Big].$$
Since $H(x)-\log(x)$ converges to the famous Euler-Mascheroni constant as $x$ goes to $+\infty$,  
we have 
$$\lim_{\alpha\downarrow 1} \,(\alpha-1) \E\Big[\log \widehat{\mathcal{C}}\a\Big]=\lim_{\alpha\downarrow 1} \,(\alpha-1) \E\Big[\log V_\alpha^{-1}\Big]=0.$$
Combining this with \eqref{eq:lal-liminf} and \eqref{eq:lower-bd1}, we obtain 
$$\liminf_{\alpha\downarrow 1} \,(\alpha-1)^2 \lal \geq C_1>0.$$
The proof of Proposition~\ref{prop:dimension-explosion} is therefore completed.

\section{The discrete setting}
\label{sec:discrete}
\renewcommand{\t}{\mathsf{T}}

\subsection{Galton--Watson trees}
\label{sec:tree-discrete}
We briefly introduce the general notion of discrete rooted ordered trees, which extends the case of infinite trees without leaves that we have seen in Section~\ref{sec:treedelta}.
A discrete rooted ordered tree~$\mathsf{t}$ is by definition a subset of $\mathcal{V}$ such that the following holds:
\begin{enumerate}
\item[(i)] $\varnothing\in \mathsf{t}\,$;

\item[(ii)] If $u=(u_1,\ldots,u_n)\in \mathsf{t} \backslash\{\varnothing\}$, then $\bar u=(u_1,\ldots,u_{n-1})\in \mathsf{t}\,$;

\item[(iii)] For every $u=(u_1,\ldots,u_n)\in \mathsf{t}$, there exists an integer $k_u(\mathsf{t})\geq 0$ such that, for every $j\in\N$, $(u_1,\ldots,u_n,j)\in \mathsf{t}$ if and only if $1\leq j\leq k_u(\mathsf{t})$.
\end{enumerate}
The quantity $k_u(\mathsf{t})$ above is called the number of children of~$u$ in $\mathsf{t}$. A vertex with no child is called a leaf. From now on, we will just say tree instead of discrete rooted ordered tree for short. 
We write~$\prec$ for the (non-strict) genealogical order on $\mathsf{t}$. 
A tree $\mathsf{t}$ is always viewed as a graph whose vertices are the elements of $\mathsf{t}$ and whose edges are the pairs $\{\bar u,u\}$ for all $u\in  \mathsf{t}\backslash\{\varnothing\}$. 
Since the lexicographical order on the vertices is not really used in our arguments, we will not pay much attention to this order structure.  

For an infinite tree $\mathsf{t}$, we say it has a single infinite line of descent if there exists a unique sequence of positive integers $(u_n)_{n\geq 1}$ such that $(u_1,u_2,\ldots,u_n)\in \mathsf{t}$ for all $n\geq 1$. 

The height of a tree $\mathsf{t}$ is written as $h(\mathsf{t})\colonequals \sup\{|u|\colon u\in \mathsf{t}\}$. The set of all vertices of $\mathsf{t}$ at generation~$n$ is denoted by $\mathsf{t}_n\colonequals \{u \in \mathsf{t} \colon |u|=n\}$. If $u\in \mathsf{t}$, the tree of descendants of $u$ is $\mathsf{t}[u]\colonequals \{w\in \mathcal{V}\colon uw\in \mathsf{t}\}$.

Let $\mathsf{t}$ be a tree of height larger than $n$, and consider a simple random walk $X=(X_k)_{k\geq 0}$ on $\mathsf{t}$ starting from the root $\varnothing$, which is defined under the probability measure~$P^{\mathsf{t}}$. We write $\tau_n\colonequals \inf\{k\geq 0 \colon |X_k|=n\}$ for the first hitting time of generation $n$ by $X$, and we define the discrete harmonic measure $\mu_n^{\mathsf{t}}$ supported on $\mathsf{t}_n$ as the law of $X_{\tau_n}$ under $P^{\mathsf{t}}$.

\smallskip
{\bf Critical Galton--Watson trees.}
For every integer $n\geq 0$, we let $\t^{(n)}$ be a discrete Galton--Watson tree with critical offspring distribution $\rho$, conditioned on non-extinction at generation~$n$, viewed as a random subset of $\mathcal V$. We suppose that the random trees $\t^{(n)}, n\geq 0$ are defined under the probability measure $\P$. 

We let $\t^{*n}$ be the reduced tree associated with $\t^{(n)}$, consisting of all vertices of $\t^{(n)}$ that have (at least) one descendant at generation $n$. Note that $|u|\leq n$ for every $u\in \t^{*n}$. 
When talking about a reduced tree, we always implicitly assume that the correct relabeling of the vertices has been done to make it a tree without changing the genealogical order. 

Set $q_{n}\colonequals \P\big(h(\t^{(0)})\geq n\big)$ the non-extinction probability up to generation $n$ for a Galton--Watson tree of offspring distribution $\rho$. 
If $L$ is the slowly varying function appearing in (\ref{eq:stable-attraction}), it has been established in~\cite[Lemma~2]{S68} that
\begin{equation}
\label{eq:survivalpro}
q_{n}^{\alpha-1}L(q_{n})\sim \frac{1}{(\alpha-1)n} \quad \mbox{ as } n \to \infty.
\end{equation}
By the asymptotic inversion property of slowly varying functions (see e.g.~\cite[Section 1.5.7]{BGT87}), it follows that
\begin{equation*}
q_{n}\sim n^{-\frac{1}{\alpha-1}}\ell(n) \quad \mbox{ as } n \to \infty,
\end{equation*} 
for some other function $\ell$ slowly varying at $\infty$. 

{\bf Size-biased Galton--Watson tree.} 
We denote by $\widehat{\mathsf{T}}$ a size-biased Galton--Watson tree with offspring distribution $\rho$.
It is defined similarly as the size-biased CTGW tree $\widehat \Gamma\a$. 
We refer to Lyons, Pementle and Peres~\cite{LPP95b} for the details. 
The unique infinite line of descent $(\mathbf{v}_1,\mathbf{v}_2,\ldots)$ in $\widehat{\mathsf{T}}$ is called its spine.
If $\widehat N$ denotes a random variable distributed according to the size-biased distribution of $\rho$, that is, $\P(\widehat N=k)=k\,\rho(k)$ for every $k\geq 0$, 
then the offspring numbers of $(\mathbf{v}_k)_{k\geq 1}$ are i.i.d.~copies of $\widehat N$. 

Let $[\widehat{\mathsf{T}}]^{(n)}$ be the finite tree obtained from $\widehat{\mathsf{T}}$ by keeping only its first $n$ generations, i.e.,
$$[\widehat{\mathsf{T}}]^{(n)} \colonequals \{u\in \widehat{\mathsf{T}}\colon |u|\leq n\}.$$
As shown in~\cite{LPP95b}, the law of the random tree $[\widehat{\mathsf{T}}]^{(n)}$ is that of $\mathsf{T}^{(n)}$ biased by $\#\mathsf{T}^{(n)}_n$. Moreover, conditionally given the first $n$ levels of $\widehat{\mathsf{T}}$, the vertex $\mathbf{v}_n$ on the spine is uniformly distributed on the $n$-th level of $\widehat{\mathsf{T}}$. 

For every integer $n\geq 1$, let $[\widehat{\mathsf{T}}]^n$ be the finite tree obtained from $\widehat{\mathsf{T}}$ by erasing the (infinite) tree of descendants of the vertex $\mathbf{v}_n$. By convention, the vertex $\mathbf{v}_n$ itself is kept in $[\widehat{\mathsf{T}}]^n$. 
We emphasize that in general $[\widehat{\mathsf{T}}]^n \neq [\widehat{\mathsf{T}}]^{(n)}$, since the height of $[\widehat{\mathsf{T}}]^n$ can be strictly larger than $n$.

We let $[\widehat{\mathsf{T}}]^{*n}$ be the reduced tree associated with the tree $[\widehat{\mathsf{T}}]^n$ up to generation~$n$, which consists of all vertices of $[\widehat{\mathsf{T}}]^n$ that have (at least) one descendant at generation $n$. Notice that $[\widehat{\mathsf{T}}]^{*n}$ is also the reduced tree associated with $[\widehat{\mathsf{T}}]^{(n)}$ up to generation $n$.

\subsection{Convergence of discrete conductances}
\label{sec:cv-dis-cond}
Take a tree $\mathsf{t}$ of height larger than $n$ and consider the new tree $\mathsf{t}'$ obtained by adding to the graph $\mathsf{t}$ an edge between its root $\varnothing$ and an extra vertex $\bar\varnothing$. 
We define under the probability measure $P^{\mathsf{t}'}$ a simple random walk $X$ on $\mathsf{t}'$ starting from the root $\varnothing$. 
Let $\tau_{\bar\varnothing}$ be the first hitting time of $\bar\varnothing$ by $X$, and for every integer $1\leq i \leq n$, let $\tau_i$ be the first hitting time of generation $i$ (of the tree $\mathsf{t}$) by $X$. 
We write
$$\mathcal{C}_i(\mathsf{t})\colonequals P^{\mathsf{t}'}(\tau_i<\tau_{\bar\varnothing}).$$
This notation is justified by the fact that $\mathcal{C}_i(\mathsf{t})$ can be interpreted as the effective conductance between $\bar\varnothing$ and the $i$-th generation of $\mathsf{t}$ in the graph $\mathsf{t}'$, see e.g.~\cite[Chapter 2]{LP10}.

Here are some basic facts concerning the conductance of the reduced Galton--Watson tree~$\mathsf{T}^{*n}$. 

\begin{proposition}
\label{prop:cv-conductance}
Suppose that the critical offspring distribution $\rho$ is in the domain of attraction of a stable distribution of index $\al\in(1,2]$. Then it holds that 
\begin{displaymath}
n\,\mathcal{C}_n(\mathsf{T}^{*n}) \xrightarrow[n\to\infty]{(\mathrm{d})}  \cc\a.
\end{displaymath}
\end{proposition}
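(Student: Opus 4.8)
The plan is to establish the convergence in distribution via a recursive coupling between the discrete reduced tree $\mathsf{T}^{*n}$ and the continuous reduced stable tree $\Delta^{(\alpha)}$, exploiting that both objects have a recursive structure governed by essentially the same distributional equation. The first step is to recall the scaling limit $n^{-1}\mathsf{T}^{*n}\to\Delta^{(\alpha)}$ of Vatutin and Yakymiv (already cited in the Introduction): more precisely, the first branching point of $\mathsf{T}^{*n}$ sits at generation roughly $nU$ for a uniform $U$ on $[0,1]$, and conditionally on reaching that point with $K$ surviving subtrees, the $K$ reduced subtrees are i.i.d.\ copies of $\mathsf{T}^{*m}$ with $m=n-\lfloor nU\rfloor$, and $K$ converges in law to $N_\alpha\sim\theta_\alpha$ as $n\to\infty$. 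This is the discrete analogue of the recursive decomposition of $\Delta^{(\alpha)}$ depicted in Figure~\ref{fig:recur}.

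The second step is the series/parallel law for conductances: if the first branching point of $\mathsf{T}^{*n}$ is at generation $j$, then $\mathsf{T}^{*n}$ consists of a path of length $j$ in series with the parallel combination of the conductances of the $K$ subtrees. Writing $C^{(i)}_{n-j}\colonequals \mathcal{C}_{n-j}(\mathsf{T}^{*(n-j),i})$ for the conductance of the $i$-th reduced subtree, the standard electric-network identities give
\begin{equation*}
\frac{1}{n\,\mathcal{C}_n(\mathsf{T}^{*n})}=\frac{j}{n}+\frac{1}{\,\sum_{i=1}^{K}\bigl((n-j)\,C^{(i)}_{n-j}\bigr)\cdot\frac{n-j}{n}\,}\cdot\frac{1}{1},
\end{equation*}
so that, setting $\widetilde C_n\colonequals n\,\mathcal{C}_n(\mathsf{T}^{*n})$ and $U_n\colonequals j/n$,
\begin{equation*}
\widetilde C_n\,\overset{(\mathrm d)}{=}\,\left(U_n+\frac{1-U_n}{\,\sum_{i=1}^{K}\widetilde C^{(i)}_{n-j}\,}\right)^{-1},
\end{equation*}
which is the exact discrete mirror of the recursive distributional equation~\eqref{eq:rde}. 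The idea is then to push this recursion to the limit: by induction on the ``depth'' of the recursion (or by a contraction argument in a suitable metric on laws on $[1,\infty]$, analogous to the fixed-point analysis of $\widehat\Phi_\alpha$ in Proposition~\ref{prop:c*-law}), one shows that any subsequential limit law of $\widetilde C_n$ must satisfy~\eqref{eq:rde}; since~\eqref{eq:rde} characterizes $\gamma_\alpha$ uniquely among probability measures on $[1,\infty]$ (this is quoted from~\cite[Proposition~2.1]{LIN1} and recalled in Section~\ref{sec:conductance}), the convergence $\widetilde C_n\to\mathcal{C}^{(\alpha)}$ follows. Tightness on $[1,\infty]$ is automatic since $\widetilde C_n\geq 1$ always; one also checks $\widetilde C_n$ does not escape to $\infty$, e.g.\ via the uniform bound $\mathcal{C}_n(\mathsf{T}^{*n})\leq \mathcal{C}_1(\mathsf{T}^{*n})=$ (number of generation-$1$ surviving vertices), whose expectation stays bounded.

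The main obstacle, and the point where the infinite-variance case genuinely departs from~\cite{LIN2}, is the control of the offspring number $K$ at the first branching point when $\alpha<2$: here $\theta_\alpha$ has infinite mean, so the sums $\sum_{i=1}^K\widetilde C^{(i)}$ are heavy-tailed and one cannot simply interchange limit and expectation. I expect to handle this exactly as in the proof of Proposition~\ref{prop:c*-law}: use the weighted harmonic--geometric mean inequality $\bigl(\tfrac{r}{a}+\tfrac{1-r}{b}\bigr)^{-1}\le a^{r}b^{1-r}$ with a well-chosen $r\in(2-\alpha,1)$ to replace the sum $\sum_{i=1}^K\widetilde C^{(i)}$ by $\bigl(\sum_{i=1}^K\widetilde C^{(i)}\bigr)^{1-r}\le K^{-r}\sum_{i=1}^K\widetilde C^{(i)}$ inside expectations, and then invoke Stirling's formula, $\theta_\alpha(k)=O(k^{-(1+\alpha)})$, to get $\sum_k k^{2-r}\theta_\alpha(k)<\infty$, which provides the uniform integrability needed to pass to the limit. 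A secondary technical point is to verify that the joint convergence ``first branching generation $\to nU$, offspring count $\to N_\alpha$, and the $m$ for the subtrees is itself large with high probability'' holds with enough uniformity that the induction closes; this is where invoking the established scaling limit $n^{-1}\mathsf{T}^{*n}\Rightarrow\Delta^{(\alpha)}$ (rather than reproving it) keeps the argument short.
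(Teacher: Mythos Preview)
The paper does not give a detailed proof of this proposition; it simply observes that, given the scaling-limit convergence $n^{-1}\mathsf{T}^{*n}\to\Delta^{(\alpha)}$ (Proposition~3.2 in~\cite{LIN1}), the argument of Proposition~11 in~\cite{LIN2} carries over verbatim. That argument proceeds by continuity of the conductance as a functional of the (suitably coupled) rescaled tree, not by the RDE/fixed-point route you propose. Your approach is in principle a legitimate alternative, but the proposal contains two concrete errors.

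First, your ``main obstacle'' is not an obstacle: $\theta_\alpha$ has \emph{finite} mean $\frac{\alpha}{\alpha-1}$ (this is stated in the Introduction). You are confusing $N_\alpha\sim\theta_\alpha$, which governs the RDE~\eqref{eq:rde} for $\mathcal{C}^{(\alpha)}$, with the size-biased $\widehat N_\alpha$, which governs the RDE~\eqref{eq:c*-rde} for $\widehat{\mathcal{C}}^{(\alpha)}$ and does have infinite mean when $\alpha<2$. The harmonic--geometric mean trick and the estimate $\sum k^{2-r}\theta_\alpha(k)<\infty$ that you import from Proposition~\ref{prop:c*-law} are needed there precisely because of the size-biasing; they are irrelevant for the present statement.

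Second, your no-escape-to-infinity argument is wrong: $\mathcal{C}_1(\mathsf{T}^{*n})$ is a hitting probability, hence always $<1$, and is not equal to the number of surviving offspring at generation~$1$; the trivial bound $n\,\mathcal{C}_n\leq n$ that follows is useless. The correct input is already in the paper as Lemma~\ref{lem:Lr-gw-condct}: $\sup_n\E[(n\,\mathcal{C}_n(\mathsf{T}^{*n}))^r]<\infty$ for every $r<\alpha$, which immediately yields tightness on $[1,\infty)$. If you repair these two points, the remaining work is to make rigorous the step ``any subsequential limit satisfies the RDE'', which is delicate because the subtree heights $n-j$ are random and need not run through the same subsequence; once you invoke the full tree-level scaling limit to close this (as you yourself suggest at the end), you have essentially rejoined the paper's route, and the continuity argument is then more direct than the fixed-point detour.
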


Recall that $\cc\a$ is the conductance of the continuous reduced tree $\Delta\a$. 
Given the convergence result of rescaled discrete trees $n^{-1}\mathsf{T}^{*n}$ towards $\Delta\a$ (Proposition 3.2 in \cite{LIN1}), the previous proposition can be shown in the same way as Proposition~11 in~\cite{LIN2}. 

The following moment estimate for the conductance $\mathcal{C}_n(\mathsf{T}^{*n})$ is Lemma 3.9 in~\cite{LIN1}. 
\begin{lemma}
\label{lem:Lr-gw-condct}
For every $r\in(0,\al)$, there exists a constant $K=K(r, \rho)\geq 1$ depending on $r$ and the offspring distribution $\rho$ such that, for every integer $n\geq 1$,
\begin{displaymath}
\E\Big[\big(n\,\mathcal{C}_n(\mathsf{T}^{*n})\big)^r\Big]\leq K.
\end{displaymath}
\end{lemma}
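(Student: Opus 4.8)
\emph{Approach.} The plan is to set up the natural recursion in $n$ satisfied by $\mathcal{C}_n(\mathsf{T}^{*n})$ and then run a self‑improving induction. Write $C_n:=\mathcal{C}_n(\mathsf{T}^{*n})$ and $D_n:=n\,C_n$. It is classical that a critical Galton--Watson tree conditioned on survival to level $n$ has its reduced tree $\mathsf{T}^{*n}$ obtained by equipping the root with a random number $K_n$ of children and attaching to each of them an independent copy of $\mathsf{T}^{*(n-1)}$, where $K_n$ is distributed as a $\mathrm{Binomial}(N,q_{n-1})$ variable ($N\sim\rho$) conditioned to be positive. Adding the extra edge $\{\bar\varnothing,\varnothing\}$ at the root and using that resistances add in series while conductances add in parallel, this gives, with $C_0=1$ and $(C^{(i)}_{n-1})_{i\ge 1}$ i.i.d.\ copies of $C_{n-1}$ independent of $K_n$,
\begin{equation*}
C_n\;\overset{(\mathrm{d})}{=}\;\Bigl(1+\bigl(C^{(1)}_{n-1}+\cdots+C^{(K_n)}_{n-1}\bigr)^{-1}\Bigr)^{-1},\qquad\text{equivalently}\qquad D_n\;\overset{(\mathrm{d})}{=}\;\frac{n\,\Sigma_n}{\Sigma_n+n-1},
\end{equation*}
where $\Sigma_n:=D^{(1)}_{n-1}+\cdots+D^{(K_n)}_{n-1}$. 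Since a path from the root to level $n$ has resistance $n$, we also have the deterministic bounds $\tfrac{n}{n+1}\le D_n\le n$.

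\emph{The key input.} The crucial estimate, and the only place where $r<\alpha$ is used, is: for every $s<\alpha$ there is $c=c(s,\rho)$ with $\E\bigl[K_n^{\,s}\mathbf{1}_{\{K_n\ge 2\}}\bigr]\le c/n$ for all $n\ge 1$. I would prove this by conditioning on $N=m$ and splitting at $m=1/q_{n-1}$: for $m\le 1/q_{n-1}$ bound $\E[\mathrm{Bin}(m,q_{n-1})^{s}\mathbf{1}_{\ge 2}]\le m^2q_{n-1}^2$, and for $m>1/q_{n-1}$ bound it by a constant times $(mq_{n-1})^{s}$; summing against $\rho$ and using the regular variation of $\rho$ (Karamata, i.e.\ $\sum_{m\le M}\rho(m)m^2\asymp M^{2-\alpha}L(1/M)$ and $\sum_{m>M}\rho(m)m^{s}\asymp M^{s-\alpha}L(1/M)$ for $s<\alpha$) together with Slack's asymptotics $q_n^{\alpha-1}L(q_n)\sim\frac1{(\alpha-1)n}$ from \eqref{eq:survivalpro}, both pieces come out $O(q_{n-1}/n)$, and dividing by $q_n\sim q_{n-1}$ gives the claim; the case $\alpha=2$ is immediate from $\sum m^2\rho(m)<\infty$.

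\emph{Assembling the moment recursion.} Set $a_n:=\E[D_n^{\,r}]$. On $\{K_n\ge 2\}$ use $D_n\le\frac{n}{n-1}\Sigma_n\le 2\Sigma_n$ and the triangle inequality in $L^{\max(r,1)}$ (recall $\max(r,1)<\alpha$) to get $\E[D_n^r\mathbf{1}_{\{K_n\ge2\}}]\le 2^r\,\E[K_n^{\max(r,1)}\mathbf{1}_{\{K_n\ge2\}}]\,a_{n-1}\le C\,a_{n-1}/n$. On $\{K_n=1\}$, writing $y=\frac{D_{n-1}-1}{D_{n-1}+n-1}$ and using convexity/concavity of $t\mapsto t^r$ (so $(1-y)^r\le 1-\min(r,1)\,y_+$ for $y\in[0,1)$, and $(1-y)^r\le 1+C_r n^{-2}$ for the at‑most‑$O(n^{-2})$ negative values of $y$), one gets the deterministic inequality $D_n^r\le D_{n-1}^r-\frac{c_r}{n}\bigl(D_{n-1}^{r+1}-D_{n-1}^r\bigr)\mathbf{1}_{\{D_{n-1}\ge1\}}+\frac{C_r}{n^2}D_{n-1}^r$ on that event. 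Taking expectations (and using $\P(K_n=1)\ge\tfrac12$ for $n$ large), one arrives at
\begin{equation*}
a_n\;\le\;a_{n-1}+\frac1n\Bigl(C'a_{n-1}-c'\,\E\bigl[(D_{n-1}^{r+1}-D_{n-1}^r)\mathbf{1}_{\{D_{n-1}\ge1\}}\bigr]\Bigr)+\frac{C''}{n^2}\,a_{n-1},\qquad n\ge n_0,
\end{equation*}
with constants depending only on $r$ and $\rho$.

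\emph{Closing the induction, and the main obstacle.} By the power‑mean inequality $\E[D_{n-1}^{r+1}\mathbf{1}_{\{D_{n-1}\ge1\}}]\ge\bigl(\E[D_{n-1}^{r}\mathbf{1}_{\{D_{n-1}\ge1\}}]\bigr)^{1+1/r}\ge(a_{n-1}-1)_+^{\,1+1/r}$, while $\E[D_{n-1}^{r}\mathbf{1}_{\{D_{n-1}\ge1\}}]\le a_{n-1}$; hence the bracket above is $\le (C'+c')a_{n-1}-c'(a_{n-1}-1)_+^{1+1/r}$, which is $\le -C''A$ as soon as $a_{n-1}\ge A/2$, provided $A=A(r,\rho)$ is chosen large enough. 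One then proves $a_n\le A$ for all $n$ by induction: if $a_{n-1}\le A/2$ the recursion gives $a_n\le\frac A2+\frac{(C'+c')A}{2n}+\frac{C''A}{n^2}<A$ for $n$ large; if $A/2<a_{n-1}\le A$ the bracket is $\le -C''A$, so $a_n\le a_{n-1}-\frac{C''A}{n}+\frac{C''A}{n^2}\le A$; the finitely many remaining small $n$ are absorbed into $A$ via $a_n\le n^r$. This yields $\sup_n\E\bigl[(n\,\mathcal{C}_n(\mathsf{T}^{*n}))^r\bigr]\le A$, which is the assertion. I expect the delicate step to be the uniform estimate $\E[K_n^{s}\mathbf{1}_{\{K_n\ge2\}}]=O(1/n)$ for $s<\alpha$: the crude bound $\E[K_n^{s}]\le(q_{n-1}/q_n)\E[N^s]$ is only $O(1)$, not $o(1)$, and would allow $a_n$ to grow, so one genuinely needs the two‑regime split and the sharp regularly‑varying asymptotics of $q_n$; everything downstream is a robust convexity‑and‑induction argument uniform in $\alpha\in(1,2]$.
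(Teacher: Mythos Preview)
The paper does not actually prove this lemma: it is stated with the remark ``The following moment estimate for the conductance $\mathcal{C}_n(\mathsf{T}^{*n})$ is Lemma 3.9 in~\cite{LIN1}'' and no further argument. So there is nothing in the present paper to compare your proof against line by line.

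That said, your proposed proof is a sound and self-contained argument. The recursion $C_n\overset{(\mathrm d)}{=}\bigl(1+(\sum_{i=1}^{K_n}C_{n-1}^{(i)})^{-1}\bigr)^{-1}$ with $K_n$ the positive‑conditioned $\mathrm{Bin}(N,q_{n-1})$ is exactly the branching structure of the discrete reduced tree, your series/parallel computation of $D_n=\frac{n\Sigma_n}{\Sigma_n+n-1}$ is correct, and the self‑improving induction via $\E[D^{r+1}\mathbf 1_{\{D\ge1\}}]\ge(\E[D^r\mathbf 1_{\{D\ge1\}}])^{1+1/r}$ closes the loop as you describe. The pivotal estimate $\E[K_n^{s}\mathbf 1_{\{K_n\ge2\}}]=O(1/n)$ for $s<\alpha$, obtained by splitting at $m\sim 1/q_{n-1}$ and feeding in Slack's asymptotics \eqref{eq:survivalpro}, is indeed the heart of the matter and your two‑regime computation is correct.

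Two small caveats worth tightening. First, the sentence ``the case $\alpha=2$ is immediate from $\sum m^2\rho(m)<\infty$'' is not quite right: the hypothesis \eqref{eq:stable-attraction} with $\alpha=2$ allows infinite variance (finite variance is only \emph{sufficient}). In that case $\sum_{m\le M}m^2\rho(m)$ is a slowly varying function of $M$, and your split still gives $O(q_{n-1}/n)$ after the same Karamata bookkeeping; just say so rather than invoking finite variance. Second, you quote the truncated‑moment asymptotics with the function $L(1/M)$ from \eqref{eq:stable-attraction}; strictly speaking one passes from \eqref{eq:stable-attraction} to the tail $\P(N\ge x)=x^{-\alpha}\tilde\ell(x)$ via a Tauberian theorem, and the slowly varying function that appears in your Karamata sums is $\tilde\ell$ evaluated at $M$ (asymptotically a constant multiple of $L(1/M)$). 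This is standard and does not affect the $O(q_{n-1}/n)$ conclusion, but it would be cleaner to state it.
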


\subsection{Backward size-biased Galton--Watson tree}
\label{sec:backward-size-biased}

In the proof of Theorem~\ref{thm:dim-discrete}, we will need a rear-view variant $\widecheck{\mathsf{T}}$ of the size-biased Galton--Watson tree $\widehat{\mathsf{T}}$, which can be tracked back to the inflated Galton--Watson tree introduced by Peres and Zeitouni in~\cite{PZ08}.
Let us explain succinctly how this backward size-biased Galton--Watson tree $\widecheck{\mathsf{T}}$ is constructed. 
More details can be found in Section 3.4 of~\cite{LIN2}. 

First, the random tree $\widecheck{\mathsf{T}}$ has a unique infinite ray of vertices $(\mathbf{u}_0,\mathbf{u}_1,\mathbf{u}_2,\ldots)$, referred to as its spine. 
We fix a genealogical order on the spine by declaring that, for every $n\geq 0$, the vertex $\mathbf{u}_n$ is at generation $-n$ of $\widecheck{\mathsf{T}}$.
Under this rule, $\mathbf{u}_{0}$ is a child of $\mathbf{u}_1$, which is a child of $\mathbf{u}_2$, etc.

To describe the finite subtrees in $\widecheck{\mathsf{T}}$ branching off every node of the spine, we take an i.i.d.~sequence $(\widehat N_n)_{n\geq 1}$ of random variables following the size-biased distribution of $\rho$.
Independently to each vertex $\mathbf{u}_n, n\geq 1$, we graft $\widehat N_n-1$ independent copies of an ordinary $\rho$-Galton--Watson descendant tree, so that $\mathbf{u}_n$ has exactly $\widehat N_n$ children: one of them, $\mathbf{u}_{n-1}$, is on the spine while the rest of them are the roots of independent $\rho$-Galton--Watson trees. 

Last but not least, we define the genealogical order on $\widecheck{\mathsf{T}}$ by keeping the genealogical orders inherited from the grafted Galton--Watson trees and combining them with the genealogical order on the spine. 
For instance, $\mathbf{u}_2$ is an ancestor of any vertex in one of the subtrees grafted at $\mathbf{u}_1$. 
The notion of generation for every vertex in $\widecheck{\mathsf{T}}$ can also be defined in a consistent manner: 
for any vertex $v$ not on the spine, there is a unique vertex $\mathbf{u}_{m}$ on the spine such that $v$ belongs to a finite subtree grafted at $\mathbf{u}_{m}$, then we say that the generation of $v$ in $\widecheck{\mathsf{T}}$ is equal to $-m+1$ plus the initial generation of $v$ inside the corresponding grafted tree. See Figure~\ref{figure:backgwbias} for an illustration. 

\begin{figure}[!h]
\begin{center}
\includegraphics[width=10cm]{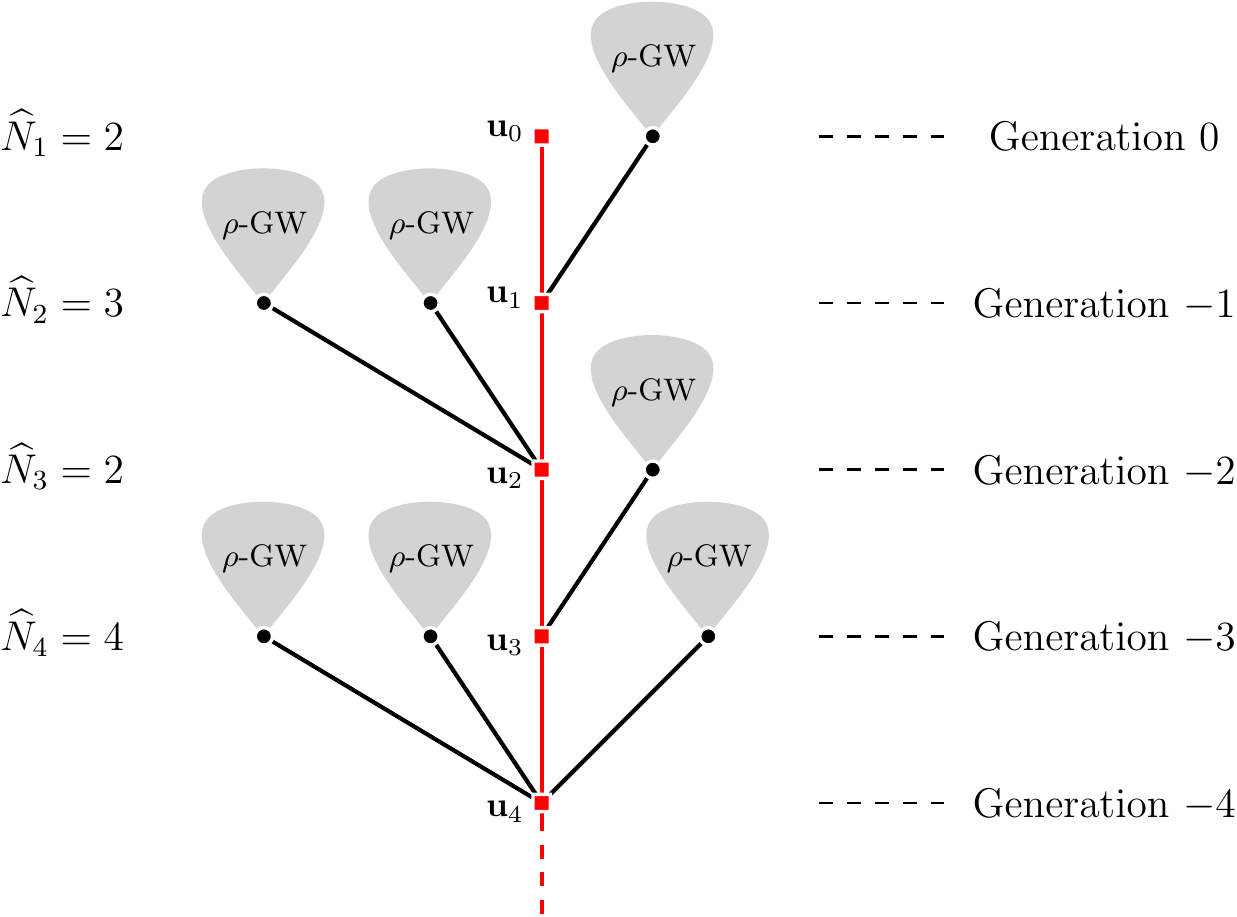}
\caption{Schematic representation of the backward size-biased Galton--Watson tree~$\check{\mathsf{T}}$ \label{figure:backgwbias}}
\end{center}
\end{figure}

Different from the inflated Galton--Watson tree described in Peres and Zeitouni~\cite{PZ08}, the vertex $\mathbf{u}_0$ in $\widecheck{\mathsf{T}}$ has no strict descendant. 

For every $n\geq 1$, let $[\widecheck{\mathsf{T}}]^{n}$ be the tree obtained from $\widecheck{\mathsf{T}}$ by only keeping the finite tree above~$\mathbf{u}_n$. 
The vertex $\mathbf{u}_n$ is taken as the root of $[\widecheck{\mathsf{T}}]^{n}$.
We observe that $[\widecheck{\mathsf{T}}]^{n}$ has the same distribution as the random tree $[\widehat{\mathsf{T}}]^{n}$ defined at the end of Section~\ref{sec:tree-discrete}. 
Moreover, the root $\mathbf{u}_n$ of $[\widecheck{\mathsf{T}}]^{n}$ corresponds to the root $\varnothing$ of $[\widehat{\mathsf{T}}]^{n}$, and the vertex $\mathbf{u}_0$ in $[\widecheck{\mathsf{T}}]^{n}$ corresponds to the vertex $\mathbf{v}_n$ in $[\widehat{\mathsf{T}}]^{n}$.

\section{Proof of Theorem~\ref{thm:dim-discrete}}
\label{sec:proof-thm1}

To prove Theorem~\ref{thm:dim-discrete}, we will follow the same idea developed in~\cite{LIN2} for the finite variance case: 
the problem can be reduced to calculating a certain hitting probability for simple random walk on the backward size-biased Galton--Watson tree $\widecheck{\mathsf{T}}$ (see Proposition~\ref{prop:red-theorem} below). Since we fix a critical offspring distribution $\rho$ satisfying \eqref{eq:stable-attraction}, the parameter $\al\in(1,2]$ is always fixed in this section.
For simplicity, we assume that all the random trees involved below are defined under the same probability measure $\P$.

Recall that $\lambda_\al=\E[\widehat \cc\a]-1>\frac{1}{\al-1}$ is the constant that appears in~\eqref{eq:loc-dim-harm}. 
Our first reduction goes from the conditional Galton--Watson tree $\mathsf{T}^{(n)}$ to the size-biased Galton--Watson tree $\widehat{\mathsf{T}}$. 
Using a result of Slack~\cite[Theorem 1]{S68}, one can easily adapt the arguments at the beginning of Section 4 in~\cite{LIN2} to see that convergence~\eqref{eq:dim-discrete} holds if for every $\delta>0$, 
\begin{equation*}
\lim_{n\to \infty}  \P\Big(n^{-\lambda_\al-\delta}\leq P^{[\widehat{\mathsf{T}}]^{n}}(X_{\tau_n}=\mathbf{v}_n)\leq n^{-\lambda_\al+\delta} \Big) =1.
\end{equation*}
Furthermore, owing to the correspondence between the truncated trees $[\widehat{\mathsf{T}}]^{n}$ and $[\widecheck{\mathsf{T}}]^{n}$, we can rewrite the previous display as 
\begin{equation}
\label{eq:1}
\lim_{n\to \infty}  \P\Big(n^{-\lambda_\al-\delta}\leq P^{[\check{\mathsf{T}}]^n}(X_{\tau_n}=\mathbf{u}_0)\leq n^{-\lambda_\al+\delta} \Big) =1.
\end{equation}

\subsection{SRW on the infinite backward tree}
\label{sec:pb-inf}

In order to show \eqref{eq:1}, we denote by $-M_1,-M_2,\ldots$ the generations of the vertices on the spine of $\widecheck{\mathsf{T}}$ where there is at least one grafted tree that has a descendant of generation 0. 
This sequence of negative integers $(-M_k)_{k\geq 1}$ is listed in the strict decreasing order, and we set by convention $M_0=0$. 
For every $k\geq 1$, we also set $L_k \colonequals M_k-M_{k-1}\geq 1$.

For every $n\geq 1$, let $k_n \colonequals k_n(\widecheck{\mathsf{T}})$ be the index such that $M_{k_n}\leq n< M_{k_n+1}$. 
We need the next result to study the asymptotic behavior of $k_n$.

\begin{lemma}
\label{lem:size-bias-gene}
Let $\widehat N$ be a random variable distributed according to the size-biased distribution of~$\rho$. Then there exists a function $\widetilde L$ slowly varying at $0^+$ such that for every $r\in(0,1)$, 
\begin{equation}
\label{eq:size-bias-gene}
\E\big[r^{\widehat N-1}\big]=1-\al (1-r)^{\al-1}\widetilde L(1-r). 
\end{equation}
Moreover, we have
\begin{equation}
\label{eq:asymp-equiv}
\lim_{x\to 0^+} \frac{\widetilde L(x)}{L(x)}=1,
\end{equation}
where $L$ is the slowly varying function appearing in \eqref{eq:stable-attraction}.
\end{lemma}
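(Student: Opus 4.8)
The plan is to reduce \eqref{eq:size-bias-gene} to a statement about the generating function $f(r)\colonequals\sum_{k\ge0}\rho(k)r^k$ of $\rho$, and then to invoke the monotone density theorem for regularly varying functions. First I would record that, since $\rho$ has mean one, $\P(\widehat N=k)=k\rho(k)$ defines a probability law on $\N$ and a term-by-term computation gives $\E[r^{\widehat N}]=rf'(r)$, hence
\begin{displaymath}
\E[r^{\widehat N-1}]=f'(r),\qquad r\in(0,1).
\end{displaymath}
Using \eqref{eq:stable-attraction} I would write $f(r)=r+g(1-r)$, where $g(x)\colonequals x^{\al}L(x)$ for $x\in[0,1)$; here $g$ is continuous with $g(0^+)=0$ (because $\al>1$) and real-analytic on $(0,1)$, so that $f'(r)=1-g'(1-r)$. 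It will therefore suffice to prove
\begin{displaymath}
g'(x)\ \sim\ \al\,x^{\al-1}L(x)\qquad\text{as }x\to0^+,
\end{displaymath}
for then one simply sets $\widetilde L(x)\colonequals g'(x)/(\al x^{\al-1})=(1-f'(1-x))/(\al x^{\al-1})$: this makes \eqref{eq:size-bias-gene} hold by construction, $\widetilde L$ is positive near $0$, it is slowly varying at $0^+$ (being asymptotically equivalent to the slowly varying function $L$), and \eqref{eq:asymp-equiv} is exactly the statement $\widetilde L(x)/L(x)\to1$.

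To prove the asymptotics of $g'$ I would first collect the relevant monotonicity. Since $f'(r)=\sum_{k\ge1}k\rho(k)r^{k-1}$ is a power series with non-negative coefficients, it is non-decreasing on $[0,1)$, so $g'(x)=1-f'(1-x)$ is non-decreasing on $(0,1)$; non-degeneracy together with the mean-one assumption forces $\rho(k)>0$ for some $k\ge2$, hence $f''>0$ and $g'>0$ on $(0,1)$. Moreover $g'$ is continuous on $(0,1)$ with $g'(0^+)=1-f'(1^-)=1-\sum_k k\rho(k)=0$, so $g'$ is bounded near $0$ and $g(x)=\int_0^x g'(t)\,\mathrm{d}t$. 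Now $g$ is regularly varying at $0^+$ with index $\al>0$ and is the integral of the (ultimately) monotone density $g'$, so the monotone density theorem (see~\cite[Theorem 1.7.2]{BGT87}, transported from $+\infty$ to $0^+$ via the substitution $x\mapsto1/x$, under which $\int_0^xg'(t)\,\mathrm{d}t$ becomes the tail integral $\int_{1/x}^{\infty}g'(1/s)s^{-2}\,\mathrm{d}s$) applies and yields exactly $g'(x)\sim\al\,x^{\al-1}L(x)$. This argument is uniform in $\al\in(1,2]$ and in particular covers the finite variance case $\al=2$.

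The main obstacle is the bookkeeping around the monotone density theorem: one must check that the density $g'$ is (ultimately) monotone — which comes for free from the power-series form of $f'$ — and reformulate the theorem at $0^+$ rather than at $\infty$, which is the standard reciprocal substitution. Granting these points, everything else is routine manipulation of generating functions and slowly varying functions.
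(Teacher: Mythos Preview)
Your proof is correct. Both your argument and the paper's hinge on the same analytic fact: since $g(x)=x^{\alpha}L(x)$ is regularly varying with index $\alpha$ and has monotone derivative (from the power-series form of $f'$), one gets $g'(x)\sim\alpha x^{\alpha-1}L(x)$. You invoke this directly as the monotone density theorem from \cite{BGT87}, define $\widetilde L(x)=g'(x)/(\alpha x^{\alpha-1})$, and are done. The paper instead quotes the same phenomenon from Lamperti~\cite{Lam} in the form $x g'(x)/g(x)\to\alpha$, then takes a detour: it integrates this to obtain a differentiable Karamata representation $L(x)=C\exp\big(\int_1^x \varepsilon(t)t^{-1}\,\mathrm{d}t\big)$ with $\varepsilon(0^+)=0$, and finally differentiates \eqref{eq:stable-attraction} explicitly using $L'(x)=L(x)\varepsilon(x)/x$ to arrive at a concrete formula $\widetilde L=L\cdot(1+\varepsilon/\alpha)$. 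Your route is shorter and avoids the intermediate representation; the paper's route yields a slightly more explicit expression for $\widetilde L$ in terms of $L$ and the Karamata remainder $\varepsilon$, but that extra information is not used anywhere else in the paper.
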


\begin{proof}
We need a nice Karamata's representation of the slowly varying function $L$ appearing in \eqref{eq:stable-attraction}. Indeed, $L$ can be expressed as
\[
L(x)=C \exp \Big(\int_1^x \frac{\varepsilon(t)}{t}\mathrm{d}t\Big)\,, \qquad x\in(0,1),
\]
where $C\neq 0$ is a constant and $\varepsilon(\cdot)$ is a continuous function on $(0,1)$ such that $\varepsilon(x)\to 0$ as $x\to 0^+$. 
To see this, we follow Slack~\cite{S68} to set $x=1-r$ and $\Lambda(x)= x^{\al-1} L(x)$.
Since $\sum k\rho(k)=1$ and $\rho(1)\neq 1$, we know that
\[ 
x\Lambda(x)= x^\al L(x)=\sum_{k\geq 0}\rho(k) r^k -r
\]
is strictly positive. From the latter expression, we also deduce that
$x\Lambda(x)$ is analytic and has monotone derivative for $x\in(0,1)$. 
It follows from a result of Lamperti~\cite[Theorem 2]{Lam} that 
\[
\lim_{x\to 0^+} \frac{x(x\Lambda(x))'}{x\Lambda(x)}=\al \quad \mbox{and thus} \quad \lim_{x\to 0^+} \frac{x\Lambda'(x)}{\Lambda(x)}=\al-1\,.
\]
We define 
\[
\varepsilon (x) \colonequals \frac{x\Lambda'(x)}{\Lambda(x)}-\al+1\,,
\]
which is continuous for $x\in (0,1)$ and satisfies that $\varepsilon(x)\to 0$ as $x\to 0^+$.
Integrating 
\[
\frac{\Lambda'(x)}{\Lambda(x)}=\frac{\al-1}{x} + \frac{\varepsilon(x)}{x}\,,
\]
we obtain 
\[
\Lambda(x)=C x^{\al-1} \exp\Big(\int_1^x \frac{\varepsilon(t)}{t} \mathrm{d}t\Big)\,,
\]
where $C\neq 0$ as $\rho(1)\neq 1$. The claimed representation for $L$ readily follows.

As an immediate consequence, $L'(x)=L(x)\frac{\varepsilon(x)}{x}$ for $x\in(0,1)$.
By differentiating \eqref{eq:stable-attraction}, we see that
\begin{align*}
\E\big[r^{\widehat N-1}\big] =\sum_{k\geq 1} k\rho(k) r^{k-1} & = 1-\al(1-r)^{\al-1} L(1-r)- (1-r)^{\al} L(1-r)\frac{\varepsilon(1-r)}{1-r} \\
& = 1-\al(1-r)^{\al-1} \widetilde L(1-r)\,,
\end{align*}
where we have put $\widetilde L(1-r)\colonequals L(1-r)(1+\frac{\varepsilon(1-r)}{1-r})$.
When $r\to 1^-$, we get~\eqref{eq:asymp-equiv} as $\varepsilon(1-r)\to 0$. 
The asymptotic equivalence of $\widetilde L$ and $L$ implies that $\widetilde L$ also varies slowly at $0^+$.
\end{proof}

\begin{lemma}
\label{lem:kn-asymptotic}
We have $\P$-a.s.
\begin{displaymath}
\lim_{n\to\infty} \frac{k_n}{\log n} = \frac{\al}{\al-1}.
\end{displaymath}
\end{lemma}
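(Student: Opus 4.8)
The plan is to identify $k_n$ with a sum of independent indicator variables whose probabilities decay like $c/m$, and then apply a law of large numbers. Concretely, for $m\geq 1$ let $A_m$ denote the event that at least one of the $\widehat{N}_m-1$ ordinary $\rho$-Galton--Watson trees grafted at the spine vertex $\mathbf{u}_m$ of $\widecheck{\mathsf{T}}$ has a vertex at generation $0$. Since $\mathbf{u}_m$ lies at generation $-m$, the root of such a grafted tree lies at generation $-m+1$, so the grafted tree reaches generation $0$ precisely when its height is at least $m-1$; hence, conditionally on $\widehat{N}_m$, the event $A_m$ has probability $1-(1-q_{m-1})^{\widehat{N}_m-1}$, with $q_{m-1}=\P(h(\mathsf{T}^{(0)})\geq m-1)$. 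By the construction of $\widecheck{\mathsf{T}}$, the pair (offspring number $\widehat{N}_m$, grafted trees at $\mathbf{u}_m$) are independent across $m$, so the events $(A_m)_{m\geq 1}$ are mutually independent, and by definition of the $M_k$'s we have $k_n=\sum_{m=1}^{n}\mathbf{1}_{A_m}$.

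First I would compute the mean. Writing $p_m\colonequals\P(A_m)=1-\E\big[(1-q_{m-1})^{\widehat{N}-1}\big]$, where $\widehat{N}$ has the size-biased law of $\rho$, and plugging $r=1-q_{m-1}$ into~\eqref{eq:size-bias-gene} of Lemma~\ref{lem:size-bias-gene}, I get
\[
p_m=\alpha\,q_{m-1}^{\,\alpha-1}\,\widetilde{L}(q_{m-1}).
\]
Since $q_{m-1}\to 0$ as $m\to\infty$, the asymptotic equivalence~\eqref{eq:asymp-equiv} allows me to replace $\widetilde{L}(q_{m-1})$ by $L(q_{m-1})$, and then~\eqref{eq:survivalpro} gives $q_{m-1}^{\,\alpha-1}L(q_{m-1})\sim\frac{1}{(\alpha-1)(m-1)}$, so that $p_m\sim\frac{\alpha}{(\alpha-1)\,m}$ as $m\to\infty$. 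Summing, $\E[k_n]=\sum_{m=1}^{n}p_m\sim\frac{\alpha}{\alpha-1}\log n$.

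It then remains to upgrade this to the almost sure statement. Because the $\mathbf{1}_{A_m}$ are independent, I would invoke Kolmogorov's strong law for non-identically distributed summands with the normalizing sequence $b_n=\log n$: using $\operatorname{Var}(\mathbf{1}_{A_m})\leq p_m=O(1/m)$ and $\sum_{n\geq 2}\frac{1}{n(\log n)^2}<\infty$, the series $\sum_{n\geq 2}b_n^{-2}\operatorname{Var}(\mathbf{1}_{A_n})$ converges, so by Kronecker's lemma $\frac{k_n-\E[k_n]}{\log n}\to 0$ almost surely; combined with $\E[k_n]\sim\frac{\alpha}{\alpha-1}\log n$ this is exactly the claim. (If one prefers to avoid quoting this theorem, the same conclusion follows from Chebyshev's inequality along the subsequence $n_j=\lfloor e^{j^2}\rfloor$, for which $\sum_j\P\big(|k_{n_j}-\E[k_{n_j}]|>\varepsilon\log n_j\big)<\infty$, together with the monotonicity of $n\mapsto k_n$ and $\log n_{j+1}/\log n_j\to 1$ to interpolate between consecutive $n_j$.)

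The only genuinely delicate points are, first, the combinatorial bookkeeping that correctly matches ``a grafted subtree at $\mathbf{u}_m$ has a descendant at generation $0$'' with ``that subtree survives to generation $m-1$'' and establishes the independence of the events $A_m$, which is what makes a law of large numbers available; and second, the slowly varying manipulation that converts~\eqref{eq:survivalpro} (a statement about $q_n^{\,\alpha-1}L(q_n)$) together with~\eqref{eq:asymp-equiv} ($\widetilde{L}\sim L$ near $0$) into the clean asymptotics $p_m\sim\frac{\alpha}{(\alpha-1)m}$. Everything else is routine.
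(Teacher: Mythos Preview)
Your proof is correct and essentially identical to the paper's own argument: the paper defines the indicators $\epsilon_j=\mathbf{1}_{A_j}$, obtains $\P(\epsilon_j=1)\sim\frac{\alpha}{(\alpha-1)j}$ via Lemma~\ref{lem:size-bias-gene} and~\eqref{eq:survivalpro}, deduces $\E[k_n]\sim\frac{\alpha}{\alpha-1}\log n$ and $\mathrm{var}(k_n)=O(\log n)$ from independence, and then appeals to ``standard monotonicity and Borel--Cantelli arguments'' for the almost-sure upgrade --- exactly the alternative route you sketched in parentheses.
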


\begin{proof}
Recall that for every $j\geq 1$, there are $\widehat N_j-1$ independent Galton--Watson trees grafted at $\mathbf{u}_j$ in $\widecheck{\mathsf{T}}$. 
Consider the event that at least one of those trees grafted at $\mathbf{u}_j$ reaches generation~0, and let $\epsilon_j$ be the corresponding indicator function. 
Then by definition, 
\begin{displaymath}
\P(\epsilon_j=0)=\E\Big[(1-q_{j-1})^{\widehat N_j-1}\Big]=\E\Big[(1-q_{j-1})^{\widehat N-1}\Big].
\end{displaymath}
Applying \eqref{eq:survivalpro} and Lemma~\ref{lem:size-bias-gene} to the latter formula yields 
\begin{equation}
\label{eq:epsilon-estimate}
\P(\epsilon_j=0)= 1-\frac{\al}{(\al-1)j}+o\big(\frac{1}{j}\big)\,, \qquad \mbox{ as } j\to \infty.
\end{equation}
Since $k_n=\epsilon_1+\epsilon_2+\cdots+\epsilon_n$, we deduce that
\begin{displaymath}
\E[k_n]=\sum\limits_{j=1}^n \big(1-\P(\epsilon_j=0)\big)\, \sim \, \frac{\al}{\al-1}\log n\,, \qquad \mbox{as } n\to \infty.
\end{displaymath}
Since $\epsilon_1, \ldots,\epsilon_n$ are independent, we also have $\mathrm{var}(k_n)=O(\log n)$, and the $L^2$-convergence of $k_n/\log n$ to $\frac{\al}{\al-1}$ follows immediately. The a.s.~convergence is then obtained by standard monotonicity and Borel--Cantelli arguments.
\end{proof}

Given $\widecheck{\mathsf{T}}$, for every $j \geq 0$ we write $P^{\check{\mathsf{T}}}_j$ for the (quenched) probability measure under which we consider a simple random walk $X=(X_k)_{k\geq 0}$ on $\widecheck{\mathsf{T}}$ starting from the vertex $\mathbf{u}_j$. 
Under $P^{\check{\mathsf{T}}}_j$, we denote by $S_0$ the hitting time of generation 0 by the simple random walk $X$, and for every $i \geq 0$, we write $\Pi_i \colonequals \inf\{k\geq 0\colon X_k=\mathbf{u}_i\}$ for the hitting time of vertex $\mathbf{u}_i$.

The following extension of~\cite[Proposition 14]{LIN2} compares simple random walks on the truncated finite tree $[\widecheck{\mathsf{T}}]^n$ and on the backward infinite tree $\widecheck{\mathsf{T}}$.
We skip its proof because one can easily adapt the original proof of Proposition 14 in~\cite{LIN2} to the present setting. 

\begin{proposition}
\label{prop:red-to-backward}
For every $\delta>0$, there exists an integer $n_0\in \N$ such that for every $n\geq n_0$, we have
\begin{displaymath}
\P\Big(P^{[\check{\mathsf{T}}]^n}(X_{\tau_n}=\mathbf{u}_0)\geq n^{-\lambda_\al+\delta}\Big) \leq 2^{\frac{2\al}{\al-1}}\, \P\Big(P^{\check{\mathsf{T}}}_{M_{k_n}}(X_{S_0}=\mathbf{u}_0, S_0<\Pi_{M_{k_n+1}})\geq n^{-\lambda_\al+\delta}/2 \Big),
\end{displaymath}
and
\begin{displaymath}
\P\Big(P^{[\check{\mathsf{T}}]^n}(X_{\tau_n}=\mathbf{u}_0)\leq n^{-\lambda_\al-\delta}\Big) \leq 2^{\frac{2\al}{\al-1}}\, \P\Big(P^{\check{\mathsf{T}}}_{M_{k_n}}(X_{S_0}=\mathbf{u}_0, S_0<\Pi_{M_{k_n+1}})\leq n^{-\lambda_\al-\delta} \Big).
\end{displaymath}
\end{proposition}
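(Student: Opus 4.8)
The argument follows that of~\cite[Proposition 14]{LIN2}; the only genuinely new point is that the spine offspring variable $\widehat N$ has infinite mean when $\al<2$, so the tail estimate of Lemma~\ref{lem:size-bias-gene} (a slowly varying function in place of a finite second moment) must be fed into the bookkeeping. The plan is first to express both sides in terms of a walk restarted at the common vertex $\mathbf{u}_{M_{k_n}}$. By definition of $(M_k)_{k\ge1}$, for $M_{k_n}<j\le n$ no tree grafted on $\mathbf{u}_j$ reaches generation $n$ of $[\widecheck{\mathsf{T}}]^n$, so every generation-$n$ vertex of $[\widecheck{\mathsf{T}}]^n$ (namely $\mathbf{u}_0$ and the deepest vertices of the trees grafted on $\mathbf{u}_{M_1},\dots,\mathbf{u}_{M_{k_n}}$) is a descendant of $\mathbf{u}_{M_{k_n}}$. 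Since the walk starts at the root $\mathbf{u}_n$, an ancestor of $\mathbf{u}_{M_{k_n}}$, and $\tau_n<\infty$ a.s.\ on the finite tree $[\widecheck{\mathsf{T}}]^n$, the walk must visit $\mathbf{u}_{M_{k_n}}$ before $\tau_n$, and the strong Markov property gives $P^{[\check{\mathsf{T}}]^n}(X_{\tau_n}=\mathbf{u}_0)=P^{[\check{\mathsf{T}}]^n}_{\mathbf{u}_{M_{k_n}}}(X_{\tau_n}=\mathbf{u}_0)$. Let $T^{\downarrow}$ be the subtree of descendants of $\mathbf{u}_{M_{k_n}}$; since $M_{k_n}\le n$, truncating at level $n$ leaves $T^{\downarrow}$ unchanged, so $T^{\downarrow}$ is literally the same tree inside $[\widecheck{\mathsf{T}}]^n$ and inside $\widecheck{\mathsf{T}}$.

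Next I would decompose the walk started from $\mathbf{u}_{M_{k_n}}$ into successive excursions away from $\mathbf{u}_{M_{k_n}}$. A downward excursion (first step into a child) stays in $T^{\downarrow}\cup\{\mathbf{u}_{M_{k_n}}\}$ and hence has the same law in both trees; write $s_0$ (resp.\ $s_1$) for the probability that it reaches generation $n$ at $\mathbf{u}_0$ (resp.\ elsewhere), quenched functionals of $T^{\downarrow}$ alone, with $s_0>0$. An upward excursion (first step to $\mathbf{u}_{M_{k_n}+1}$) always returns to $\mathbf{u}_{M_{k_n}}$ inside $[\widecheck{\mathsf{T}}]^n$, because the part of $[\widecheck{\mathsf{T}}]^n$ strictly above $\mathbf{u}_{M_{k_n}}$ is finite and carries no generation-$n$ vertex; inside $\widecheck{\mathsf{T}}$ it either returns or first hits $\mathbf{u}_{M_{k_n+1}}$ (no generation-$n$ vertex lies strictly between $\mathbf{u}_{M_{k_n}}$ and $\mathbf{u}_{M_{k_n+1}}$), the latter having some probability $u\ge0$. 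Summing the geometric series over returning excursions yields, pointwise in the tree,
\[
P^{[\check{\mathsf{T}}]^n}(X_{\tau_n}=\mathbf{u}_0)=\frac{s_0}{s_0+s_1},\qquad
P^{\check{\mathsf{T}}}_{M_{k_n}}\!\big(X_{S_0}=\mathbf{u}_0,\ S_0<\Pi_{M_{k_n+1}}\big)=\frac{s_0}{s_0+s_1+u}\,.
\]
Since $u\ge0$, the right-hand quantity is always at most the left-hand one; this is exactly the second inequality of the proposition (already with constant $1$, a fortiori with $2^{2\al/(\al-1)}$).

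It remains to prove the first inequality, for which it suffices that $u$ is, with high probability, not much larger than $s_0+s_1$: on $\{u\le s_0+s_1\}$ one has $\tfrac{s_0}{s_0+s_1+u}\ge\tfrac12\tfrac{s_0}{s_0+s_1}$, so that event together with $\{P^{[\check{\mathsf{T}}]^n}(X_{\tau_n}=\mathbf{u}_0)\ge n^{-\lal+\delta}\}$ forces $P^{\check{\mathsf{T}}}_{M_{k_n}}(X_{S_0}=\mathbf{u}_0,S_0<\Pi_{M_{k_n+1}})\ge n^{-\lal+\delta}/2$. Conditioning on $T^{\downarrow}$, which already fixes $s_0$, $s_1$ and hence $P^{[\check{\mathsf{T}}]^n}(X_{\tau_n}=\mathbf{u}_0)$, one is reduced to bounding, uniformly in $n$, the conditional probability of $\{u>s_0+s_1\}$; both $s_0+s_1$ and $u$ are governed by conductances across distances $M_{k_n}$ and $L_{k_n+1}=M_{k_n+1}-M_{k_n}$ respectively, and by \eqref{eq:epsilon-estimate} (hence by the slowly varying function of Lemma~\ref{lem:size-bias-gene}) the last gap $L_{k_n+1}$, which straddles level $n$, is of order $n$ just like $M_{k_n}$, so $u$ and $s_0+s_1$ are of comparable order with probability bounded below. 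Making this conditional estimate quantitative is the main obstacle: the partial dependence between $M_{k_n}$ and the trees grafted on $\mathbf{u}_{M_{k_n}+1},\dots,\mathbf{u}_n$ has to be disentangled, and because $\widehat N$ has only a regularly varying tail of exponent $\al-1<1$ for $\al<2$, the hitting probabilities along the spine decay only polynomially, so the second-moment bounds used in~\cite{LIN2} must be replaced by bounds involving the slowly varying function of Lemma~\ref{lem:size-bias-gene}. Tracking the multiplicative losses in this estimate exactly as in~\cite[Proposition 14]{LIN2} produces the constant $2^{2\al/(\al-1)}$ and completes the proof.
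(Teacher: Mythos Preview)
Your outline is essentially the right one and matches what the paper has in mind (the paper itself omits the proof, pointing to \cite[Proposition~14]{LIN2}). The excursion decomposition at $\mathbf{u}_{M_{k_n}}$ yielding
\[
P^{[\check{\mathsf{T}}]^n}(X_{\tau_n}=\mathbf{u}_0)=\frac{s_0}{s_0+s_1},
\qquad
P^{\check{\mathsf{T}}}_{M_{k_n}}\big(X_{S_0}=\mathbf{u}_0,\ S_0<\Pi_{M_{k_n+1}}\big)=\frac{s_0}{s_0+s_1+u}
\]
is correct, and the second inequality is indeed immediate from $u\ge0$. For the first inequality your strategy is also right: condition on the $\sigma$-field $\mathcal{G}_n$ generated by the grafted trees at $\mathbf{u}_1,\dots,\mathbf{u}_n$ (which determines $M_{k_n}$, $s_0$, $s_1$, and hence the event on the left-hand side), and show that the conditional probability of $\{u\le s_0+s_1\}$ is bounded below. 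Since the upward escape probability satisfies $u=1/L_{k_n+1}$ up to the common degree factor (the dead-end trees grafted between $\mathbf{u}_{M_{k_n}}$ and $\mathbf{u}_{M_{k_n+1}}$ do not affect hitting probabilities on the spine) while $s_0+s_1\ge 1/M_{k_n}$ from the bare spine path $\mathbf{u}_{M_{k_n}},\dots,\mathbf{u}_0$, it suffices to bound $\P(L_{k_n+1}\ge M_{k_n}\mid\mathcal{G}_n)$ from below. As $M_{k_n}\le n$, this conditional probability is at least $\prod_{j=n+1}^{2n}\P(\epsilon_j=0)\to 2^{-\al/(\al-1)}$ by \eqref{eq:epsilon-estimate}, so the constant $2^{2\al/(\al-1)}$ is comfortably sufficient for all $n\ge n_0$.

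One correction to your commentary: the infinite mean of $\widehat N$ plays no role in this particular proposition, and there are no second-moment bounds in the proof of \cite[Proposition~14]{LIN2} that need to be replaced. The only input beyond the excursion decomposition is the estimate \eqref{eq:epsilon-estimate}, which has exactly the same form $\P(\epsilon_j=0)=1-\tfrac{\al}{(\al-1)j}+o(1/j)$ for every $\al\in(1,2]$; only the constant in front of $1/j$ changes with $\al$, and this is what produces the $\al$-dependent factor $2^{2\al/(\al-1)}$. Lemma~\ref{lem:size-bias-gene} is used upstream to derive \eqref{eq:epsilon-estimate}, but once that is in hand the argument is identical to the finite-variance case. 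Your remark that the regularly varying tail of $\widehat N$ forces genuinely new analysis applies to later results such as Lemmas~\ref{lemma:I_k-frac-moments} and~\ref{lemma:Lr-bdd-hk}, not here.
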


Combining (\ref{eq:1}) with the previous result, we can derive Theorem~\ref{thm:dim-discrete} from the next proposition.
\begin{proposition}
\label{prop:red-theorem}
For every $\delta>0$, it holds that
\begin{equation}
\label{eq:red-theorem}
\lim_{n\to \infty}  \P\Big(n^{-\lambda_\al-\delta}\leq P^{\check{\mathsf{T}}}_{M_{k_n}}\!(X_{S_0}=\mathbf{u}_0, S_0<\Pi_{M_{k_n+1}}) \leq n^{-\lambda_\al+\delta}  \Big) =1.
\end{equation}
\end{proposition}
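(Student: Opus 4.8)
The plan is to make \eqref{eq:red-theorem} precise by computing the quenched hitting probability through the electrical‑network structure of $\widecheck{\mathsf T}$, reducing $-\log$ of it to a sum over the active levels of the spine of asymptotically stationary contributions, and then invoking a law of large numbers together with Lemma~\ref{lem:kn-asymptotic}. Throughout I condition on the realization of $\widecheck{\mathsf T}$ and work with its reduction (keeping only vertices with a descendant at generation~$0$), which leaves the walk's exit distribution unchanged; in this reduced tree the only branch points on the geodesic from $\mathbf{u}_{M_{k_n}}$ down to $\mathbf{u}_0$ are the active spine vertices $\mathbf{u}_{M_1},\dots,\mathbf{u}_{M_{k_n}}$. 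Since $\mathbf{u}_0$ is the unique point at the bottom of the spine and the exit law of the walk killed at generation~$0$ and at $\mathbf{u}_{M_{k_n+1}}$ coincides with a unit current flow from $\mathbf{u}_{M_{k_n}}$, the classical factorization of harmonic measure on a tree gives
\begin{equation}
\label{eq:flow-factorization}
P^{\check{\mathsf T}}_{M_{k_n}}\!\big(X_{S_0}=\mathbf{u}_0,\ S_0<\Pi_{M_{k_n+1}}\big)=\frac{\mathcal C^{\downarrow}_{k_n}}{\mathcal C^{\downarrow}_{k_n}+\mathcal C^{\uparrow}_{k_n}+\mathcal C^{\mathrm{div}}_{k_n}}\ \prod_{j=1}^{k_n-1}\frac{\mathcal C^{\downarrow}_{j}}{\mathcal C^{\downarrow}_{j}+\mathcal C^{\mathrm{div}}_{j}}\,,
\end{equation}
where $\mathcal C^{\downarrow}_{j}$ is the effective conductance from $\mathbf{u}_{M_j}$ to generation~$0$ through the down‑spine subtree, $\mathcal C^{\mathrm{div}}_{j}$ the total conductance from $\mathbf{u}_{M_j}$ to generation~$0$ through the trees grafted at $\mathbf{u}_{M_j}$ that reach generation~$0$, and $\mathcal C^{\uparrow}_{k_n}$ the conductance from $\mathbf{u}_{M_{k_n}}$ up to $\mathbf{u}_{M_{k_n+1}}$. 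Taking $-\log$ turns the right‑hand side into $\sum_{j=1}^{k_n}\Xi_j$ with $\Xi_j=\log\!\big(1+\mathcal C^{\mathrm{div}}_j/\mathcal C^{\downarrow}_j\big)$ for $j<k_n$ and an analogous term at $j=k_n$, a single term that is negligible in the sum.

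The next step is to identify the limit of a single contribution $\Xi_j$. For a fixed active level $j$ the subtree of $\widecheck{\mathsf T}$ below $\mathbf{u}_{M_j}$ spans exactly $M_j$ generations down to generation~$0$; after rescaling distances by $M_j$, Lemma~\ref{lem:size-bias-gene} shows that the active levels below $\mathbf{u}_{M_j}$ form in the limit a Poisson process of intensity $\frac{\al}{\al-1}(1-x)^{-1}\mathbf{1}_{x\in(0,1)}\mathrm{d}x$, that by \eqref{eq:size-bias-gene} and \eqref{eq:bias-gene-fct} the number of trees grafted at any active level reaching generation~$0$ converges in law to $\widehat N_\al-1$, and that the position $M_{j-1}/M_j$ of the first active level below $\mathbf{u}_{M_j}$ converges in law to $1-V_\al$ (its tail $(1-s)^{\al/(\al-1)}$ being exactly $\P(V_\al>s)$). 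Together with Proposition~\ref{prop:cv-conductance} applied to the diverting grafted trees, this yields the joint convergence $M_j\mathcal C^{\downarrow}_j\Rightarrow\widehat{\mathcal C}\a$ and $M_j\mathcal C^{\mathrm{div}}_j\Rightarrow\cc\a_2+\cdots+\cc\a_{\widehat N_\al}$, with the two limits independent since the down‑spine subtree and the trees grafted at $\mathbf{u}_{M_j}$ are disjoint portions of $\widecheck{\mathsf T}$: indeed, writing $M_j\mathcal C^{\downarrow}_j=\big(1-\frac{M_{j-1}}{M_j}+\frac{M_{j-1}/M_j}{M_{j-1}\mathcal C^{\downarrow}_{j-1}+M_{j-1}\mathcal C^{\mathrm{div}}_{j-1}}\big)^{-1}$ one recognizes in the limit the fixed‑point equation \eqref{eq:c*-rde}, so $M_j\mathcal C^{\downarrow}_j\Rightarrow\widehat{\mathcal C}\a$ by the characterization in Proposition~\ref{prop:c*-law}(1). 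Hence $\Xi_j\Rightarrow\Lambda:=\log\frac{\widehat{\mathcal C}\a+\cc\a_2+\cdots+\cc\a_{\widehat N_\al}}{\widehat{\mathcal C}\a}$, and by \eqref{eq:lal-2} we have $\E[\Lambda]=\frac{\al-1}{\al}\lal$.

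It then remains to prove $\frac1{k_n}\sum_{j=1}^{k_n}\Xi_j\to\frac{\al-1}{\al}\lal$ in $\P$‑probability; combined with Lemma~\ref{lem:kn-asymptotic} ($k_n/\log n\to\frac{\al}{\al-1}$) this gives $-\log P^{\check{\mathsf T}}_{M_{k_n}}(\cdots)/\log n\to\lal$, i.e. \eqref{eq:red-theorem}. Conditionally on the active‑level sequence $(M_i)_{i\ge1}$, the rescaled conductances $(M_j\mathcal C^{\downarrow}_j)_{j\ge1}$ form a Markov chain whose driving noise at step $j$ is the conditionally independent rescaled conductance of the trees grafted at $\mathbf{u}_{M_{j-1}}$, asymptotically time‑homogeneous with transition given by the map of \eqref{rde-bis}, whose unique stationary law $\widehat\gamma_\al$ attracts every initial condition because $\widehat\Phi_\al$ is a strict $\mathrm{d}_1$‑contraction (Proposition~\ref{prop:c*-law}(1)); an ergodic‑theorem argument for this chain, after coupling it with its stationary version, gives the convergence of the Cesàro average of the $\Xi_j$. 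Equivalently one runs a blocking argument: $(\mathcal C^{\downarrow}_j,\mathcal C^{\mathrm{div}}_j)$ is sandwiched, up to errors vanishing as $j\to\infty$, between quantities measurable with respect to a bounded number of levels around $\mathbf{u}_{M_j}$, which are independent for well‑separated indices, and then a second‑moment estimate closes the argument.

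The main obstacle, and the only genuine departure from \cite{LIN2}, is the regime $\al<2$, where $\widehat N_\al$ has infinite mean: the number of grafted trees reaching generation~$0$ at an active level, and hence $\mathcal C^{\mathrm{div}}_j$, is heavy‑tailed, so the finite‑mean and $L^2$ estimates available in the finite‑variance case break down. The point is that $\Lambda$ nevertheless has finite mean — this is \eqref{eq:lal-2}, established in the spirit of the bound around \eqref{eq:logsum-finite}, which uses only $\E[\cc\a]<\infty$ and $\sum_k k^{1+r}\theta_\al(k)<\infty$ for $r<\al-1$ — and the same moment‑at‑order‑$r$ control, together with the uniform bound of Lemma~\ref{lem:Lr-gw-condct}, supplies the uniform integrability of the $\Xi_j$ needed to pass from the one‑term convergence of the previous step to the law of large numbers and to discard the exceptional events on which some active level carries an unusually large number of reaching subtrees. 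The factorization \eqref{eq:flow-factorization}, the reduction to the conductance Markov chain, and the blocking estimate are otherwise routine adaptations of \cite{LIN2}, to which we refer for the remaining details.
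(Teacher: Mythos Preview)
Your strategy is the same as the paper's: factorize $p_{k_n}=P^{\check{\mathsf T}}_{M_{k_n}}(X_{S_0}=\mathbf{u}_0,\,S_0<\Pi_{M_{k_n+1}})$ through the active spine levels, prove a law of large numbers for the resulting Ces\`aro sum, and combine with Lemma~\ref{lem:kn-asymptotic}. The limiting mean you identify, $\E[\Lambda]=\frac{\al-1}{\al}\lal$ via \eqref{eq:lal-2}, matches the paper's $\E[Q_\infty]$ exactly.

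The one genuine difference is the choice of summands. You use the direct current--flow factorization \eqref{eq:flow-factorization}, so your $\Xi_j=\log\big(1+\mathcal C^{\mathrm{div}}_j/\mathcal C^{\downarrow}_j\big)$ involves the full down--spine conductance $\mathcal C^{\downarrow}_j$, which depends on everything below level $M_j$. The paper instead works with the recurrence between $p_k$ and $p_{k-1}$, giving summands $Q_j=\log\big(1+\frac{c_j+\ell_{j+1}}{\ell_j}-\frac{\ell_j}{\ell_j+c_{j-1}+h_{j-1}}\big)$; here the non--local part is isolated in $h_{j-1}$. Both telescopings are valid and the sums agree up to boundary terms, but they suggest different proofs of the LLN: your decomposition naturally leads to the Markov--chain viewpoint (the sequence $(M_j\mathcal C^{\downarrow}_j)_j$ is driven by the map in \eqref{rde-bis} and asymptotically stationary by Proposition~\ref{prop:c*-law}(1)), whereas the paper's decomposition is tailored to an $L^2$ argument: Lemma~\ref{lemma:5rv-cv-in-law} gives the joint convergence of the five local quantities, Lemma~\ref{lemma:Lr-bdd-hk} controls the non--local $M_kh_k$ in $L^r$ for $r<\al$, and Lemma~\ref{lemma:auxiliary-L1} provides the moment bounds on $Q_iQ_j$ needed to show $\frac1k\sum_{j=2}^kQ_j\to\frac{\al-1}{\al}\lal$ in $L^2$ (Lemma~\ref{lem:sum-L2-conv}).

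Your LLN sketch is the weakest link: the chain $(M_j\mathcal C^{\downarrow}_j)$ is only \emph{asymptotically} time--homogeneous (the distribution of the number of reaching subtrees at an active level and the ratio $M_{j-1}/M_j$ both depend on $j$), so ``ergodic theorem after coupling with the stationary version'' needs to be made precise; and for the alternative ``blocking'' route you propose, note that $\Xi_j$ is not measurable with respect to a bounded window of levels, so you would in effect have to reprove the analogue of Lemma~\ref{lemma:Lr-bdd-hk} for $M_j\mathcal C^{\downarrow}_j$ to localize it. None of this is wrong, but the paper's $L^2$ route is more explicit and self--contained in exactly the places you defer to \cite{LIN2}.
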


Under the probability measure $\P$, we set therefore
\begin{displaymath}
p_k=p_k(\widecheck{\mathsf{T}}) \colonequals P^{\check{\mathsf{T}}}_{M_k}(X_{S_0}=\mathbf{u}_0, S_0<\Pi_{M_{k+1}})
\end{displaymath}
for every $k\geq 1$.
By the definition of $M_k$, there exists at least one subtree grafted to $\mathbf{u}_{M_k}$ that reaches generation~0. 
The root of this subtree is necessarily a child of $\mathbf{u}_{M_k}$ distinct from $\mathbf{u}_{M_k-1}$. 
If such a subtree is unique, we let $c_k=c_k(\widecheck{\mathsf{T}})$ be the probability that a simple random walk starting from its root reaches generation 0 before hitting $\mathbf{u}_{M_k}$. 
If there is more than one such grafted trees, we take $c_k$ to be the sum of the corresponding probabilities. 
We justify this definition by the fact that $c_k$ can be interpreted as the effective conductance between $\mathbf{u}_{M_k}$ and generation 0 in the graph that consists only of the vertex $\mathbf{u}_{M_k}$ and all the subtrees grafted to it.

We also set, for every $k\geq 1$,
\begin{displaymath}
h_k=h_k(\widecheck{\mathsf{T}}) \colonequals P^{\check{\mathsf{T}}}_{M_k-1}(S_0<\Pi_{M_k}),
\end{displaymath}
which is the probability that a simple random walk starting from $\mathbf{u}_{M_k-1}$ reaches generation 0 before hitting $\mathbf{u}_{M_k}$. 
We write in addition $\ell_k=1/L_k= (M_k-M_{k-1})^{-1}$. 
A recurrence relation between $p_k$ and $p_{k-1}$ is obtained in Section~4.1 of~\cite{LIN2}, which states that 
\begin{displaymath}
p_1=p_k \times \prod_{j=2}^k\Big( 1+ \frac{c_j+\ell_{j+1}}{\ell_j}-\frac{\ell_j}{\ell_j+c_{j-1}+h_{j-1}}\Big).
\end{displaymath}
Thus we define, for every $j\geq 2$,
\begin{displaymath}
Q_j=Q_j(\widecheck{\mathsf{T}}) \colonequals  \log \Big( 1+\frac{c_j+\ell_{j+1}}{\ell_j}-\frac{\ell_j}{\ell_j+c_{j-1}+h_{j-1}}\Big).
\end{displaymath}

\begin{lemma}
\label{lem:sum-L2-conv}
We have
\begin{equation}
\label{eq:sum-L2-conv}
\frac{1}{k} \sum\limits_{j=2}^k Q_j  \,\xrightarrow[k\to\infty]{L^2(\P)} \, \frac{\al-1}{\al}\lambda_\al.
\end{equation}
\end{lemma}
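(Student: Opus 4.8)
The plan is to obtain the $L^2$-convergence \eqref{eq:sum-L2-conv} by the second moment method: it is enough to show that $\E[Q_j]\to\frac{\al-1}{\al}\lal$ as $j\to\infty$ and that $\mathrm{Var}\big(\frac1k\sum_{j=2}^kQ_j\big)\to0$ as $k\to\infty$. The first convergence gives $\frac1k\sum_{j=2}^k\E[Q_j]\to\frac{\al-1}{\al}\lal$ by the Cesàro lemma, and combined with the second it yields \eqref{eq:sum-L2-conv}. Throughout I would work conditionally on the marking pattern, i.e.\ on the $\sigma$-field $\mathcal G$ generated by $(M_k)_{k\ge1}$ (equivalently by the indicators $(\epsilon_m)_{m\ge1}$), since given $\mathcal G$ the subtrees grafted at distinct marked spine vertices $\mathbf u_{M_k}$ are independent reduced Galton--Watson trees conditioned to reach the relevant generation. (Alternatively one could couple $(Q_j)_{j\ge2}$ with a genuinely stationary ergodic sequence and invoke the ergodic theorem, in the spirit of the continuous proof of \cref{thm:dim-continu}; the $L^2$ statement, however, is most directly reached through the two estimates above.)

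To identify $\lim_j\E[Q_j]$, rewrite $Q_j=\log\big(\frac{c_{j-1}+h_{j-1}}{\ell_j+c_{j-1}+h_{j-1}}+\frac{c_j}{\ell_j}+\frac{\ell_{j+1}}{\ell_j}\big)$. By \cref{lem:kn-asymptotic} one has $M_k=\exp\!\big(\frac{\al-1}{\al}k+o(k)\big)$, so $L_j$, $M_{j-1}$ and $M_j$ are of the same order; more precisely, the argument behind \eqref{eq:epsilon-estimate} shows that the consecutive ratios $M_{j-1}/M_j$ converge in law to $1-V_\al$, with $V_\al$ of density $\frac{\al}{\al-1}(1-x)^{\frac1{\al-1}}$, and become asymptotically independent across $j$. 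Using in addition the convergence $n\,\mathcal C_n(\mathsf T^{*n})\xrightarrow{(\mathrm d)}\cc\a$ of \cref{prop:cv-conductance}, the survival asymptotics \eqref{eq:survivalpro}, and \cref{lem:size-bias-gene} (from which the number of subtrees grafted at $\mathbf u_{M_{j-1}}$ that survive to generation~$0$, conditioned to be positive, converges in law to $\widehat N_\al-1$), one checks that the rescaled quantities $M_{j-1}c_{j-1}$, $M_{j-1}h_{j-1}$ and $M_jc_j$ converge jointly in law, their limits being built from independent copies of $\cc\a$, the size-biased conductance $\wcc\a$, $\widehat N_\al$ and $V_\al$. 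Passing this through the continuous map $\log$ gives $Q_j\xrightarrow{(\mathrm d)}\mathcal Q$ for an explicit functional $\mathcal Q$ of those variables. Granting the uniform bound $\sup_j\E[Q_j^2]<\infty$ (see below), one gets $\E[Q_j]\to\E[\mathcal Q]$, and a direct computation — which reduces, via the distributional fixed-point equation \eqref{eq:c*-rde} characterising $\wcc\a$ together with the identities of \cref{prop:c*-law}, to the formula \eqref{eq:lal-2} for $\lal$ — shows $\E[\mathcal Q]=\frac{\al-1}{\al}\lal$.

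For the variance, write $\mathrm{Var}\big(\frac1k\sum_{j=2}^kQ_j\big)=\frac1{k^2}\sum_{2\le i,j\le k}\mathrm{Cov}(Q_i,Q_j)$; the diagonal contributes $O(1/k)$ once $\sup_j\E[Q_j^2]<\infty$ is known. For $i<j$, $Q_i$ and $Q_j$ depend (given $\mathcal G$) on the grafted subtrees at $\mathbf u_{M_{i-1}},\mathbf u_{M_i}$ and $\mathbf u_{M_{j-1}},\mathbf u_{M_j}$ respectively, and, through the escape probabilities $h_{i-1},h_{j-1}$, on the structure further below; but the influence of generations far below $\mathbf u_{M_{j-1}}$ on $h_{j-1}$ decays geometrically, because the conductance of a long segment of the reduced tree is essentially determined by its top. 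Combining this with the conditional independence given $\mathcal G$ and the renewal-type control of the marking pattern supplied by \eqref{eq:epsilon-estimate}, one obtains $|\mathrm{Cov}(Q_i,Q_j)|\le C\varrho^{|i-j|}$ for some $\varrho<1$, whence $\sum_{2\le i,j\le k}|\mathrm{Cov}(Q_i,Q_j)|=O(k)$ and the variance tends to $0$. Both estimates parallel the proof of the corresponding lemma in~\cite{LIN2}.

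The genuinely new difficulty, compared with the finite-variance case, is the bound $\sup_j\E[Q_j^2]<\infty$ when $1<\al<2$, since then $\widehat N_\al$ has infinite mean. Controlling the positive part of $Q_j$ leads to estimating $\E\big[(\log(1+\cc\a_2+\cdots+\cc\a_{\widehat N_\al}))^2\big]$ and similar quantities, which I would handle as in the proof of \cref{prop:c*-law}: via the inequalities $\log(a+b)\le\sqrt a+\log b$ and $\log a\le r^{-1}a^r$ for $a,b\ge1$, the moment bound of \cref{lem:Lr-gw-condct} for exponents $r<\al-1$, and the tail estimate $\theta_\al(k)=O(k^{-(1+\al)})$. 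Controlling the negative part — that $Q_j$ is not too negative — relies on the lower bound $c_{j-1}+h_{j-1}\ge c\,M_{j-1}^{-1}$, obtained from a gambler's-ruin estimate along the spine segment joining $\mathbf u_{M_{j-1}-1}$ to generation~$0$, together with the fact, quantified by \eqref{eq:epsilon-estimate}, that the short gaps $\{L_j=1\}$ become exponentially rare. This last point is the step I expect to demand the most care.
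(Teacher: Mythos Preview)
Your overall strategy — first-moment convergence plus vanishing variance — matches the paper's, and your identification of the limit via the joint convergence of the rescaled quantities (the paper's Lemma~\ref{lemma:5rv-cv-in-law}) together with the fixed-point identity~\eqref{eq:c*-rde} is correct in outline. Two points, however, need correction.

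\textbf{The variance step.} Your claim that $|\mathrm{Cov}(Q_i,Q_j)|\le C\varrho^{|i-j|}$ rests on a geometric contraction that is indeed present (it is exactly the recursion $\E[(M_kh_k)^r]\le C+\xi\,\E[(M_{k-1}h_{k-1})^r]$ of Lemma~\ref{lemma:Lr-bdd-hk}), but your description of it is backwards: $h_{j-1}$ depends only on the tree \emph{above} $\mathbf u_{M_{j-1}}$, so there is no ``influence of generations far below $\mathbf u_{M_{j-1}}$'' to speak of. What one must actually quantify is how little $h_{j-1}$ depends on the structure near the top (i.e.\ near generation~$0$, where $Q_i$ lives for small $i$). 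The paper does not prove exponential covariance decay directly; instead, following Lemma~20 of~\cite{LIN2}, it replaces $h_{j-1}$ by an approximation depending only on a window of bounded length below level $j$, controls the error in $L^r$ via Lemma~\ref{lemma:Lr-bdd-hk} (compensating for $r<\alpha<2$ with the higher even moments~\eqref{eq:L2n-unif-bdd} and H\"older), and obtains $\limsup_k\E\big[(\frac1k\sum Q_j)^2\big]\le(\E[Q_\infty])^2$ directly. Your covariance route could be made to work from the same contraction, but you still have to handle the correlation coming through $\mathcal G$ itself (the $L_j$'s are not i.i.d.), and that is not addressed.

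\textbf{The uniform $L^2$ bound.} This is where there is a genuine gap. For the positive part you point to the \emph{continuous} quantity $\E\big[(\log(1+\cc\a_2+\cdots+\cc\a_{\widehat N_\al}))^2\big]$, but what is needed is a bound on the \emph{discrete} $\E[(M_kc_k)^r]$ uniform in $k$. The new ingredient the paper isolates is Lemma~\ref{lemma:I_k-frac-moments}: the uniform bound $\sup_k\E[(I_k)^r\mid I_k\ge1]<\infty$ for $r<\alpha-1$, where $I_k$ is the number of grafted subtrees at $\mathbf u_k$ that reach generation~$0$. Combined with Lemma~\ref{lem:Lr-gw-condct} and Jensen this gives $\sup_k\E[(M_kc_k)^r]<\infty$, whence the positive part of $Q_k$ is uniformly bounded in $L^p$ for any $p$. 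Your inequalities $\log(a+b)\le\sqrt a+\log b$ etc.\ are relevant for the limiting variables in Proposition~\ref{prop:c*-law}, but they do not by themselves yield the required uniform discrete bound. For the negative part, the event $\{L_j=1\}$ is a red herring: the bound one needs is $|Q_j^-|\le\log(1+\frac{M_{j-1}}{2L_j})$, and $\P(L_j/M_{j-1}\le\varepsilon)\to1-(1+\varepsilon)^{-\alpha/(\alpha-1)}\sim\frac{\alpha}{\alpha-1}\varepsilon$, which is linear in $\varepsilon$, not exponentially small. The correct control is again via small fractional moments: $\sup_j\E\big[(M_{j-1}/L_j)^r\big]<\infty$ for $r$ small, which follows from the estimate~\eqref{eq:epsilon-estimate} as in~\cite{LIN2}.
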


Proposition~\ref{prop:red-theorem} (and thus Theorem~\ref{thm:dim-discrete}) can be easily deduced from this key lemma.
In fact, on account of Lemma~\ref{lem:kn-asymptotic} and Lemma~\ref{lem:sum-L2-conv}, for any $\delta>0$, the event
\begin{displaymath}
\Big\{(\lambda_\al-\delta/2)\log n\leq \sum\limits_{j=2}^{k_n} Q_j \leq (\lambda_\al+\delta/2)\log n \Big\}=\Big\{ p_1 n^{-\lambda_\al-\delta/2}\leq p_{k_n}\leq p_1 n^{-\lambda_\al+\delta/2}  \Big\}
\end{displaymath}
holds with $\P$-probability tending to~1 as $n\to \infty$. Since for sufficiently large $n$, the last event is included in 
\begin{displaymath}
\Big\{ n^{-\lambda_\al-\delta}\leq p_{k_n}\leq n^{-\lambda_\al+\delta}  \Big\}= \Big\{n^{-\lambda_\al-\delta}\leq P^{\check{\mathsf{T}}}_{M_{k_n}}\!(X_{S_0}=\mathbf{u}_0, S_0<\Pi_{M_{k_n+1}}) \leq n^{-\lambda_\al+\delta}\Big\},
\end{displaymath}
the required convergence (\ref{eq:red-theorem}) follows immediately. 
Therefore, we are left with the task of proving Lemma~\ref{lem:sum-L2-conv}.

\subsection{Proof of Lemma~\ref{lem:sum-L2-conv}}

Let us begin with several auxiliary results.
For every $k\geq 1$, we denote by $I_k$ the number of independent $\rho$-Galton--Watson trees grafted at $\mathbf{u}_k$ that reach generation 0 in $\widecheck{\mathsf{T}}$. 

\begin{lemma}
\label{lem:nb-graft}
As $k$ tends to $\infty$, the conditional generating function $\E[r^{I_k} \!\mid\! I_k\geq 1]$ converges to $1-(1-r)^{\al-1}$. 
Thus according to \eqref{eq:bias-gene-fct}, the conditional distribution of $I_k$ given $I_k\geq 1$ converges in distribution to $\widehat N_\al-1$. 
\end{lemma}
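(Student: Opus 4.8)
The plan is to reduce the conditional generating function of $I_k$ to Lemma~\ref{lem:size-bias-gene} by a binomial-thinning argument. First I would recall that at the spine vertex $\mathbf{u}_k$ of $\widecheck{\mathsf{T}}$ there are $\widehat N_k-1$ independent ordinary $\rho$-Galton--Watson trees grafted, each rooted at generation $-k+1$; such a grafted tree reaches generation $0$ if and only if its height is at least $k-1$, which occurs with probability $q_{k-1}$, independently of the others and of $\widehat N_k$. Hence, conditionally on $\widehat N_k$, the random variable $I_k$ is binomial with parameters $\widehat N_k-1$ and $q_{k-1}$, so that for every $r\in[0,1]$,
\begin{equation*}
\E\big[r^{I_k}\big]=\E\big[(1-q_{k-1}(1-r))^{\widehat N-1}\big],
\end{equation*}
where $\widehat N$ has the size-biased distribution of $\rho$.

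Next I would apply \eqref{eq:size-bias-gene} with $1-s=q_{k-1}(1-r)$, which gives
\begin{equation*}
\E\big[r^{I_k}\big]=1-\al\,q_{k-1}^{\al-1}(1-r)^{\al-1}\,\widetilde L\big(q_{k-1}(1-r)\big),
\end{equation*}
and in particular, taking $r=0$,
\begin{equation*}
\P(I_k\geq 1)=1-\P(I_k=0)=\al\,q_{k-1}^{\al-1}\,\widetilde L(q_{k-1}).
\end{equation*}
Dividing, the factors $\al\,q_{k-1}^{\al-1}$ cancel and one is left with the clean identity
\begin{equation*}
\E\big[r^{I_k}\,\big|\,I_k\geq 1\big]=\frac{\E[r^{I_k}]-\P(I_k=0)}{\P(I_k\geq 1)}=1-(1-r)^{\al-1}\,\frac{\widetilde L\big(q_{k-1}(1-r)\big)}{\widetilde L(q_{k-1})}.
\end{equation*}

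Finally I would let $k\to\infty$: since $q_{k-1}\to 0$ by \eqref{eq:survivalpro} and $\widetilde L$ is slowly varying at $0^+$ by Lemma~\ref{lem:size-bias-gene}, for each fixed $r\in(0,1)$ the ratio $\widetilde L(q_{k-1}(1-r))/\widetilde L(q_{k-1})$ converges to $1$, whence $\E[r^{I_k}\mid I_k\geq 1]\to 1-(1-r)^{\al-1}$; the cases $r=0$ and $r=1$ are trivial. The identification of the limiting law with that of $\widehat N_\al-1$ is then immediate from \eqref{eq:bias-gene-fct}, which yields $\E[r^{\widehat N_\al-1}]=r^{-1}\E[r^{\widehat N_\al}]=1-(1-r)^{\al-1}$. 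There is no genuine obstacle; the only point requiring a little care is that the slow-variation limit $\widetilde L(q_{k-1}(1-r))/\widetilde L(q_{k-1})\to 1$ is invoked for a \emph{fixed} dilation factor $1-r$, so pointwise slow variation suffices and no uniformity in $r$ is needed.
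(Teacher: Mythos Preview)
Your proof is correct and follows essentially the same route as the paper: compute $\E[r^{I_k}]$ by binomial thinning of $\widehat N-1$ with success probability $q_{k-1}$, apply Lemma~\ref{lem:size-bias-gene}, and pass to the conditional generating function. The one difference is that the paper evaluates $\P(I_k\ge 1)$ via the asymptotic $\frac{\al}{(\al-1)k}+o(1/k)$ from the proof of Lemma~\ref{lem:kn-asymptotic} together with \eqref{eq:survivalpro} and \eqref{eq:asymp-equiv}, whereas you obtain $\P(I_k\ge 1)=\al\,q_{k-1}^{\al-1}\widetilde L(q_{k-1})$ directly by setting $r=0$ in the same generating-function identity; this makes the $\al\,q_{k-1}^{\al-1}$ factors cancel exactly and reduces the limit to the bare slow-variation statement $\widetilde L(q_{k-1}(1-r))/\widetilde L(q_{k-1})\to 1$, which is a neat shortcut.
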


\begin{proof}
For fixed $r\in (0,1)$, by independence of the $\widehat N_k -1$ subtrees grafted at $\mathbf{u}_k$, we have
$$ \E\big[r^{I_k}\big] =\E\Big[(rq_{k-1}+1-q_{k-1})^{\widehat N_k -1}\Big]=\E\Big[(rq_{k-1}+1-q_{k-1})^{\widehat N -1}\Big]. $$
It follows from Lemma \ref{lem:size-bias-gene} that
\begin{equation}
\label{eq:Ik-gene}
\E\big[r^{I_k}\big]=1-\al(q_{k-1}(1-r))^{\al-1}\widetilde{L}(q_{k-1}(1-r)),
\end{equation}
where $\widetilde L$ is the slowly varying function appearing in~\eqref{eq:size-bias-gene}.
Thus we obtain
\begin{eqnarray*}
\E\big[r^{I_k} \!\mid\! I_k\geq 1\big] &=& \frac{\E\big[r^{I_k}\big]-\P(I_k=0)}{\P(I_k\geq 1)} \\
  &=& 1- \al(1-r)^{\al-1} \frac{(q_{k-1})^{\al-1}\widetilde L(q_{k-1}(1-r))}{\P(I_k\geq 1)}.
\end{eqnarray*}
Recall that it has been observed in the proof of Lemma~\ref{lem:kn-asymptotic} that
$$ \P(I_k\geq 1)=\P(\epsilon_k=1)=\frac{\al}{(\al-1)k}+o\Big(\frac{1}{k}\Big), \qquad \mbox{ as } k\to \infty. $$
Using \eqref{eq:survivalpro} and \eqref{eq:asymp-equiv}, one readily verifies that 
$$\frac{(q_{k-1})^{\al-1}\widetilde L(q_{k-1}(1-r))}{\P(I_k\geq 1)} \build{\longrightarrow}_{k \to\infty}^{} \frac{1}{\al}\,.$$
The proof is therefore finished. 
\end{proof}

Note that if $\rho$ has a finite variance, the previous lemma becomes trivial because $\P$-a.s.~for all sufficiently large $k$, there will be a unique subtree grafted at $\mathbf{u}_{M_k}$ that reaches generation 0. 
   
With some extra efforts, we also get the following estimate for $I_k$ conditionned on $I_k\geq 1$. 

\begin{lemma}
\label{lemma:I_k-frac-moments}
For any $r\in (0,\alpha-1)$, it holds that
\begin{displaymath}
\sup_{k\geq 1} \E\big[(I_k)^r \!\mid\! I_k\geq 1\big]<\infty.
\end{displaymath}
\end{lemma}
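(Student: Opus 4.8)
We must bound $\E[(I_k)^r\mid I_k\geq 1]$ uniformly in $k$. The plan is to remove the conditioning: since $(I_k)^r$ vanishes on $\{I_k=0\}$, one has $\E[(I_k)^r\mid I_k\geq 1]=\E[(I_k)^r]/\P(I_k\geq 1)$, and it was observed in the proof of Lemma~\ref{lem:kn-asymptotic} (see~\eqref{eq:epsilon-estimate}) that $\P(I_k\geq 1)=\P(\epsilon_k=1)\sim\frac{\al}{(\al-1)k}$, which by~\eqref{eq:survivalpro} and~\eqref{eq:asymp-equiv} is of the same order as $q_{k-1}^{\al-1}\widetilde L(q_{k-1})$. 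Hence it will suffice to show that $\E[(I_k)^r]\leq C\, q_{k-1}^{\al-1}\widetilde L(q_{k-1})$ for all large $k$, with $C$ independent of $k$. I note in passing that the naive route---bounding the conditional binomial moments by $\E[(\mathrm{Bin}(m-1,q_{k-1}))^r]\leq((m-1)q_{k-1})^r$ via Jensen and summing against $\P(\widehat N=m)$---only yields the bound $q_{k-1}^r\,\E[(\widehat N-1)^r]$, which is of too large an order (and anyway involves $\widehat N$, of infinite mean when $\al<2$); this is exactly why the argument must instead exploit the exact generating function~\eqref{eq:Ik-gene}, and this is the one genuinely non-routine step.

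The tool I would use is the elementary comparison: for every $r\in(0,1)$ and every $\Z_+$-valued random variable $X$,
\[
\E\big[X^r\big]\;\leq\;\frac{1}{a_r}\int_0^1\frac{1-\E\big[(1-s)^X\big]}{s^{1+r}}\,\rd s,
\]
where $a_r>0$ depends on $r$ only. To prove it one writes the right-hand integral, by Tonelli, as $\E[c_{r,X}]$ with $c_{r,j}:=\int_0^1 s^{-1-r}\big(1-(1-s)^j\big)\,\rd s$; one has $c_{r,0}=0$, while for $j\geq1$, restricting the integral to $s\in[1/j,1]$, where $(1-s)^j\leq e^{-1}$, gives $c_{r,j}\geq a_r j^r$, so that $\E[c_{r,X}]\geq a_r\,\E[X^r]$. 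Applying this with $X=I_k$ and substituting the identity~\eqref{eq:Ik-gene}, rewritten as $1-\E[(1-s)^{I_k}]=\al\,(q_{k-1}s)^{\al-1}\,\widetilde L(q_{k-1}s)$, produces
\[
\E\big[(I_k)^r\big]\;\leq\;\frac{\al}{a_r}\,q_{k-1}^{\al-1}\int_0^1 s^{\,\al-2-r}\,\widetilde L(q_{k-1}s)\,\rd s.
\]

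Finally I would estimate the remaining integral. Since $r<\al-1$ we have $\al-2-r>-1$, so the integral converges (in particular $\E[(I_k)^r]<\infty$ for every $k$), and the change of variables $u=q_{k-1}s$ turns it into $q_{k-1}^{-(\al-1-r)}\int_0^{q_{k-1}} u^{\,\al-2-r}\widetilde L(u)\,\rd u$. As $\widetilde L$ is slowly varying at $0^+$ by Lemma~\ref{lem:size-bias-gene}, Karamata's theorem (\cite{BGT87}) gives $\int_0^x u^{\,\al-2-r}\widetilde L(u)\,\rd u\sim\frac{x^{\al-1-r}\widetilde L(x)}{\al-1-r}$ as $x\downarrow0$, whence $\int_0^1 s^{\,\al-2-r}\widetilde L(q_{k-1}s)\,\rd s\sim\frac{\widetilde L(q_{k-1})}{\al-1-r}$ as $k\to\infty$. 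Combining the last two displays yields $\E[(I_k)^r]\leq C\,q_{k-1}^{\al-1}\widetilde L(q_{k-1})$ for all large $k$, and since $\E[(I_k)^r]<\infty$ for each of the finitely many remaining values of $k$, the desired uniform bound on $\E[(I_k)^r\mid I_k\geq 1]$ follows. The only delicate part of the argument is this routing through~\eqref{eq:Ik-gene}; the subsequent manipulation of the slowly varying function $\widetilde L$ is standard.
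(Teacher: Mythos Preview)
Your proof is correct. Both you and the paper start from the key identity~\eqref{eq:Ik-gene} and use regular variation tools, but the routes diverge from there. The paper passes to the Laplace transform $\int_0^\infty\P(I_k^*\geq x)\,\theta e^{-\theta x}\,\rd x$, controls the slowly varying factor via Potter's bounds, and then invokes a Tauberian/monotone-density argument to obtain a uniform tail estimate $\P(I_k^*\geq x)\leq C/x^r$, from which the $r$-th moment bound follows by summation. You instead bound the $r$-th moment directly by the integral $\int_0^1 s^{-1-r}\bigl(1-\E[(1-s)^{I_k}]\bigr)\,\rd s$ through the elementary comparison $c_{r,j}\geq a_r j^r$, and then use Karamata's theorem to evaluate its asymptotics. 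Your route is a little more direct and avoids the Tauberian machinery (and the tail bound, which is stronger than what is actually needed here); the paper's route, on the other hand, yields the extra information that the conditional tail of $I_k$ is uniformly polynomially bounded.
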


\begin{proof}
First, given any fixed integer $k_0\geq 1$, there exists some constant $c>0$ such that 
\begin{displaymath}
\P(I_k\geq 1)\geq c  \quad \mbox{ for every } k\in \{1,\ldots,k_0\}.
\end{displaymath} 
As $r<\alpha-1$ it is clear that 
$\E[(\widehat N)^r ]= \sum_{k\geq 1} k^{1+r} \rho(k)<\infty$, and then
\begin{displaymath}
\max_{1\leq k\leq k_0} \E\big[(I_k)^r \!\mid\! I_k\geq 1\big]\leq \frac{1}{c}\E\big[(I_k)^r\big]\leq \frac{1}{c}\E\big[(\widehat N)^r \big]<\infty.
\end{displaymath}

For notational convenience, we define a random variable $I_k^*$ under $\P$ having the conditional distribution of $I_k$ given $I_k\geq 1$. Since
\begin{displaymath}
\E\big[(I_k^*)^r\big]\leq \sum_{n\geq 1} r n^{r-1} \P(I_k^*\geq n),
\end{displaymath}
it suffices to show that for any $r\in (0,\alpha-1)$, we can find a constant $C>0$ such that for all $k$ sufficiently large and for all $x\geq 1$,
\begin{equation}
\label{eq:Ik-tail}
\P(I_k^*\geq x) \leq \frac{C}{x^r}.
\end{equation}
To this end, we rewrite \eqref{eq:Ik-gene} as
\begin{displaymath}
\int_0^\infty \P(I_k^*\geq x) \theta e^{-\theta x} \,\mathrm{d}x = \alpha (1-e^{-\theta})^{\alpha-1} \frac{(q_{k-1})^{\alpha-1} \widetilde{L}(q_{k-1}(1-e^{-\theta}))}{\P(I_k\geq 1)}, 
\end{displaymath}
and recall that 
\begin{displaymath} 
\frac{(q_{k-1})^{\al-1}\widetilde L(q_{k-1})}{\P(I_k\geq 1)} \build{\longrightarrow}_{k \to\infty}^{} \,\frac{1}{\al}\,.
\end{displaymath} 
Using Potter's bounds for $\widetilde L$ and the previous convergence, we know that for any $\delta\in(0,\alpha-1)$, there exists $k_0(\delta)\in \N$ only depending on $\delta$ such that for every $k\geq k_0(\delta)$, 
\begin{displaymath}
\frac{\widetilde L(q_{k-1}(1-e^{-\theta}))}{\widetilde L(q_{k-1})}\leq 2 (1-e^{-\theta})^{-\delta} \quad \mbox{ for every } \theta>0,
\end{displaymath}
and 
\begin{displaymath}
\frac{(q_{k-1})^{\al-1}\widetilde L(q_{k-1})}{\P(I_k\geq 1)} \leq \frac{2}{\alpha}.
\end{displaymath}
It follows that for every integer $k\geq k_0(\delta)$, 
\begin{displaymath}
\int_0^\infty \P(I_k^*\geq x) \theta e^{-\theta x} \,\mathrm{d}x  \leq 4(1-e^{-\theta})^{\alpha-1-\delta},
\end{displaymath}
from which one can deduce \eqref{eq:Ik-tail} using the classical arguments for Karamata's Tauberian theorem and the monotone density theorem. For more details, we refer the reader to~\cite[Section 1.7]{BGT87}. 
\end{proof}

The next result generalizes Lemma 17 in~\cite{LIN2}.
\begin{lemma}
\label{lemma:5rv-cv-in-law}
We have
\begin{displaymath}
\Big(\frac{L_{k+1}}{M_k},\frac{L_k}{M_{k-1}},M_kc_k,M_{k-1}c_{k-1},M_{k-1}h_{k-1}\Big)  \, \build{\longrightarrow}_{k\to\infty}^{(\mathrm{d})}\, \big(\mathcal{R}_\al,\mathcal{R}'_\al,\textstyle \sum \mathcal{C}\a,\textstyle \sum'\mathcal{C}\a, \widehat{\mathcal{C}}\a \, \big),
\end{displaymath}
where in the limit:
\begin{itemize}
\item $\mathcal{R}_\al$ and $\mathcal{R}'_\al$ are two positive random variables with the same distribution given by
    $$\P(\mathcal{R}_\al>x)=(1+x)^{-\frac{\al}{\al-1}}  \quad \mbox{for all } x\geq 0\,;$$
\item Take $\widehat N_\al$ and $\widehat N'_\al$ two integer-valued random variables following the size-biased distribution of~$\theta_\al$. 
Let $(\cc\a_k)_{k\geq 2}$ and $(\widetilde\cc\a_k)_{k\geq 2}$ be two sequences of random variables identically distributed according to $\gamma_\al$. 
Then
$$\textstyle \sum \mathcal{C}\a \colonequals \cc\a_2+\cdots+\cc\a_{\widehat N_\al}  \quad \mbox{and} \,\quad \textstyle \sum' \mathcal{C}\a \colonequals \widetilde\cc\a_2+\cdots+ \widetilde\cc\a_{\widehat N'_\al}\,;$$
\item $\widehat{\mathcal{C}}\a$ is distributed according to $\widehat \gamma_\al$.
\end{itemize}
Furthermore, we suppose that all the random variables listed above in the description of the limit are defined under the probability measure $\P$, and they are independent.
\end{lemma}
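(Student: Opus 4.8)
The plan is to exploit the level-wise independence built into $\widecheck{\mathsf{T}}$. Call the \emph{level-$j$ data} the pair formed by the size-biased offspring number of the spine vertex $\mathbf{u}_j$ and the independent ordinary $\rho$-Galton--Watson trees grafted at $\mathbf{u}_j$; these data are independent across $j$, the indicators $\epsilon_j$ from the proof of Lemma~\ref{lem:kn-asymptotic} are functions of the level-$j$ data only, and $\{M_\ell\}_{\ell\geq1}=\{j:\epsilon_j=1\}$. Since $M_{k-1}\geq k-1\to\infty$ almost surely, I would condition on $\{M_{k-1}=m',\,M_k=m\}$ for $m'<m$, which is the same as conditioning on $\sum_{j<m'}\epsilon_j=k-2$, on $\epsilon_{m'}=1$, on $\epsilon_{m'+1}=\cdots=\epsilon_{m-1}=0$, and on $\epsilon_m=1$. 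Under this conditioning $M_{k-1}h_{k-1}$ depends only on the level-$j$ data with $j<m'$, $M_{k-1}c_{k-1}$ only on the level-$m'$ data, $M_kc_k$ only on the level-$m$ data, $L_{k+1}/M_k$ only on the level-$j$ data with $j>m$, while $L_k/M_{k-1}$ is frozen at $(m-m')/m'$; as these ranges are disjoint and the conditioning event is a product of events over them, the conditional law of the five-tuple factorises. It then suffices to identify the limit of each factor and integrate back against the law of $(M_{k-1},M_k)$ by dominated convergence, the distribution of $L_k/M_{k-1}$ being recovered from that of $(m-m')/m'$.

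For the gap factors, level-wise independence gives $\P(L_k>\ell\mid M_{k-1}=m')=\prod_{j=m'+1}^{m'+\ell}\P(\epsilon_j=0)$, and inserting the estimate \eqref{eq:epsilon-estimate} with $\ell=\lfloor m'x\rfloor$, together with $\sum_{j=m'+1}^{\lfloor m'(1+x)\rfloor}1/j\to\log(1+x)$, yields $\P(L_k/M_{k-1}>x\mid M_{k-1}=m')\to(1+x)^{-\alpha/(\alpha-1)}=\P(\mathcal{R}'_\alpha>x)$; the same computation at $\mathbf{u}_{M_k}$ gives $L_{k+1}/M_k\Rightarrow\mathcal{R}_\alpha$. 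Since these limits do not depend on $m',m$, they pass to the unconditioned statement.

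For the conductance factors, given $\{M_k=m\}$ the grafted trees at $\mathbf{u}_m$ reaching generation $0$ are, conditionally on their number $I_m\ (\geq1)$, independent copies of a $\rho$-Galton--Watson tree conditioned on reaching height $m-1$, so $c_k=\sum_{i=1}^{I_m}\mathcal{C}_{m-1}(\mathsf{t}_i)$ with $\mathsf{t}_i$ i.i.d.\ equal in law to the reduced tree $\mathsf{T}^{*(m-1)}$. By Proposition~\ref{prop:cv-conductance}, $(m-1)\mathcal{C}_{m-1}(\mathsf{T}^{*(m-1)})\Rightarrow\cc\a$, and by Lemma~\ref{lem:nb-graft}, $(I_m\mid\epsilon_m=1)\Rightarrow\widehat N_\alpha-1$; as both have infinite mean when $\alpha<2$, I would pass to the sum by truncating the number of summands at a level $J$, bounding $\P(I_m>J\mid\epsilon_m=1)$ uniformly in $m$ via Lemma~\ref{lemma:I_k-frac-moments} and the contribution of the discarded summands via the fractional-moment estimate of Lemma~\ref{lem:Lr-gw-condct}, just as one bounds $\P(\widehat N_\alpha-1>J)$ for the limit. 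This gives $M_kc_k\Rightarrow\cc\a_2+\cdots+\cc\a_{\widehat N_\alpha}=\sum\cc\a$ and, identically, $M_{k-1}c_{k-1}\Rightarrow\sum'\cc\a$ with an independent offspring number.

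The factor $M_{k-1}h_{k-1}$ is the main obstacle: conditioning on $\{M_{k-1}=m'\}$ carries the constraint $\sum_{j<m'}\epsilon_j=k-2$ on the subtree of $\widecheck{\mathsf{T}}$ rooted at $\mathbf{u}_{m'-1}$, whose rescaled conductance $\mathcal{C}_{m'-1}$ would otherwise converge to $\wcc\a$ by the size-biased counterpart of Proposition~\ref{prop:cv-conductance}. Rather than showing this constraint is asymptotically negligible (which is true, since the levels near the root of that subtree carry only an $O(1)$ fraction of $\sum_{j<m'}\epsilon_j$), I would use the series--parallel identity $1/h_k=(L_k-1)+1/(c_{k-1}+h_{k-1})$, which gives
\begin{equation*}
\frac{1}{M_kh_k}=\frac{L_k}{M_k}-\frac{1}{M_k}+\frac{M_{k-1}}{M_k}\cdot\frac{1}{M_{k-1}c_{k-1}+M_{k-1}h_{k-1}}\,.
\end{equation*}
Here $L_k/M_k\Rightarrow\mathcal{R}'_\alpha/(1+\mathcal{R}'_\alpha)$, which is exactly $V_\alpha$ (tail $(1-v)^{\alpha/(\alpha-1)}$), $M_{k-1}/M_k\Rightarrow1-V_\alpha$, and $V_\alpha$, $M_{k-1}c_{k-1}$, $M_{k-1}h_{k-1}$ are asymptotically independent by the level-range argument. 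Hence every subsequential limit $\Xi$ of $(M_kh_k)_k$ satisfies $\Xi\overset{(\mathrm d)}{=}\bigl(V_\alpha+(1-V_\alpha)/(\Xi+\cc\a_2+\cdots+\cc\a_{\widehat N_\alpha})\bigr)^{-1}$, the distributional equation \eqref{eq:c*-rde}; tightness of $(M_kh_k)_k$ in $(0,\infty)$ follows from $M_kh_k\leq M_k/(L_k-1)$ and $L_k/M_k\Rightarrow V_\alpha>0$; so by the uniqueness part of Proposition~\ref{prop:c*-law} (using that the iterates of $\widehat\Phi_\alpha$ converge uniformly, since $\widehat\Phi_\alpha^2(\sigma)\preceq\widehat\Phi_\alpha^2(\delta_\infty)$ has uniformly bounded first moment) one obtains $M_{k-1}h_{k-1}\Rightarrow\wcc\a$. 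Finally, feeding the five limits into the factorised conditional law and integrating against the law of $(M_{k-1},M_k)$ — which concentrates on large $m',m$ with $(m-m')/m'$ converging in law to $\mathcal{R}'_\alpha$ — yields the joint convergence to the product of the five stated laws; the remaining routine points are adaptations of the proof of Lemma~17 in~\cite{LIN2}.
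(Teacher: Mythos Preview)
Your overall decomposition---conditioning on $(M_{k-1},M_k)$ and exploiting the level-wise independence of the grafted data---matches the paper's three-step outline: first the gap ratios via~\eqref{eq:epsilon-estimate}, then the conductance sums via Proposition~\ref{prop:cv-conductance} and Lemma~\ref{lem:nb-graft}, and finally $M_{k-1}h_{k-1}$. For the first two components your arguments are essentially those of the paper (the paper computes Laplace transforms of $M_kc_k$ rather than truncating the number of summands, but this is cosmetic).

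The genuine difference is in the treatment of $M_{k-1}h_{k-1}$. The paper observes that $h_{k-1}=\mathcal{C}_{M_{k-1}-1}([\widecheck{\mathsf{T}}]^{M_{k-1}-1})$, where $[\widecheck{\mathsf{T}}]^{M_{k-1}-1}$ has the law of a truncated size-biased Galton--Watson tree, and then invokes the approximation method of Lemma~17 in~\cite{LIN2}, which compares this discrete tree to the continuous size-biased reduced tree $\widehat\Delta\a$ directly. Your route instead exploits the series--parallel recursion for $h_k$ in terms of $h_{k-1}$ and identifies the limit as the unique fixed point $\widehat\gamma_\al$ of $\widehat\Phi_\alpha$. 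This is a legitimate and rather elegant alternative that avoids importing the approximation machinery from~\cite{LIN2}; it also makes transparent why the limit satisfies the distributional equation~\eqref{eq:c*-rde}.

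One point needs tightening, however. Your claim that ``every subsequential limit $\Xi$ of $(M_kh_k)_k$ satisfies the RDE~\eqref{eq:c*-rde}'' is not literally correct: if $M_{k_n}h_{k_n}\Rightarrow\Xi$ along a subsequence, the recursion only yields $\mathrm{Law}(\Xi)=\widehat\Phi_\alpha(\mathrm{Law}(\Xi'))$ where $\Xi'$ is a \emph{further} subsequential limit of $M_{k_n-1}h_{k_n-1}$, and there is no a~priori reason for $\Xi'=\Xi$ in law. The correct argument is the one you hint at in your parenthetical: iterate, extracting diagonal subsequences so that $M_{k_n-j}h_{k_n-j}\Rightarrow\Xi_j$ for every $j\geq 0$, to obtain $\mathrm{Law}(\Xi)=\widehat\Phi_\alpha^{\,j}(\mathrm{Law}(\Xi_j))$ for all $j$; then use the uniform domination $\widehat\Phi_\alpha^2(\sigma)\preceq\widehat\Phi_\alpha^2(\delta_\infty)\in\mathscr{M}_1$ together with the contractivity of $\widehat\Phi_\alpha$ on $(\mathscr{M}_1,\mathrm{d}_1)$ established in Proposition~\ref{prop:c*-law} to conclude $\mathrm{d}_1(\mathrm{Law}(\Xi),\widehat\gamma_\al)\leq c_\alpha^{\,j-2}\cdot\mathrm{const}\to 0$. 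With this correction your argument is complete. (A second, harmless point: the recursion is $1/h_k=L_k+1/(c_{k-1}+h_{k-1})$ rather than $L_k-1$, since the spine segment from $\mathbf{u}_{M_k}$ to $\mathbf{u}_{M_{k-1}}$ has $L_k$ edges; the discrepancy is $O(1/M_k)$ and vanishes in the limit.)
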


\begin{proof}
We only give an outline of the proof, which is divided into three steps. 
First of all, using \eqref{eq:epsilon-estimate} one can verify that 
\begin{equation}
\label{eq:ML-conv}
\Big(\frac{L_k}{M_{k-1}},\frac{L_{k+1}}{M_k}\Big)  \, \build{\longrightarrow}_{k\to\infty}^{(\mathrm{d})}\, \big(\mathcal{R}_\al',\mathcal{R}_\al \big)
\end{equation}
holds for a similar reason as display (32) in~\cite{LIN2}.

Then notice that conditionally on $M_k$ and on the number $I_{M_k}$ of subtrees grafted at $\mathbf{u}_{M_k}$ that reach generation 0, these subtrees are independent $\rho$-Galton--Watson trees conditioned to have height greater than $M_k-1$. 
Recall that $c_k$ is the sum of the conductances between $\mathbf{u}_{M_k}$ and generation 0 in those subtrees. 
On account of Proposition~\ref{prop:cv-conductance} and Lemma~\ref{lem:nb-graft}, we can obtain the convergence in distribution of $M_k c_k$ to $\sum \mathcal{C}\a$ by calculating the Laplace transform of $M_k c_k$. 
A slight change of this argument by first conditioning on $M_{k-1}$ gives the joint convergence 
\begin{equation}
\label{eq:c-conv}
\big(M_{k-1}c_{k-1},M_kc_k\big)  \, \build{\longrightarrow}_{k\to\infty}^{(\mathrm{d})}\, \big(\textstyle \sum' \mathcal{C}\a,\textstyle \sum \mathcal{C}\a\big),
\end{equation}
which holds jointly with~(\ref{eq:ML-conv}), provided we keep all the limiting random variables independent. 

Finally, recall that with the notation of Section~\ref{sec:cv-dis-cond}, $h_{k-1}=\mathcal{C}_{M_{k-1}-1}([\widecheck{\mathsf{T}}]^{M_{k-1}-1})$.
The convergence 
\begin{equation*}
M_{k-1}h_{k-1} \, \build{\longrightarrow}_{k\to\infty}^{(\mathrm{d})}\, \widehat{\mathcal{C}}\a
\end{equation*}
can be shown by the same approximation method used in the proof of Lemma 17 in~\cite{LIN2}. 
Since $h_{k-1}$ only depends on $M_{k-1}$ and the finite tree strictly above the vertex $\mathbf{u}_{M_{k-1}}$ in~$\widecheck{\mathsf{T}}$, the last convergence holds jointly with (\ref{eq:ML-conv}) and (\ref{eq:c-conv}), provided $\widehat{\mathcal{C}}\a$ is taken to be independent of all the random variables involved in the definition of $(\mathcal{R}_\al,\mathcal{R}'_\al,\sum \mathcal{C}\a,\sum' \mathcal{C}\a)$.
\end{proof}

Lemma 18 in \cite{LIN2} states that in the finite variance case ($\al=2$), the second moment of $M_k h_k$ is uniformly bounded. 
When $\al<2$, we have the following lemma. 

\begin{lemma}
\label{lemma:Lr-bdd-hk}
For every $r\in (0,\al)$, it holds that
$$\sup_{k\geq 1} \E\big[(M_k h_k)^r\big]<\infty.$$
\end{lemma}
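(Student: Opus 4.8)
The quantity $M_k h_k$ is essentially the height-normalized conductance $M_{k-1}$ times $h_{k-1}=\mathcal{C}_{M_{k-1}-1}([\widecheck{\mathsf{T}}]^{M_{k-1}-1})$, shifted in index; the issue is obtaining a moment bound that is uniform in $k$ rather than just the convergence in distribution already recorded in Lemma~\ref{lemma:5rv-cv-in-law}. The strategy is to condition on $M_k$ and on the tree above $\mathbf{u}_{M_k}$, reduce $h_k$ to a conductance of a (truncated) reduced Galton--Watson tree, and then apply the uniform moment estimate of Lemma~\ref{lem:Lr-gw-condct} together with a good tail bound on $M_k$.

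\medskip
\noindent\textbf{Step 1: identify $h_k$ with a conductance of a reduced tree.} By the construction of $\widecheck{\mathsf{T}}$, the probability $h_k=P^{\check{\mathsf{T}}}_{M_k-1}(S_0<\Pi_{M_k})$ is, conditionally on $M_k$ and on the portion of $\widecheck{\mathsf{T}}$ lying (strictly) above $\mathbf{u}_{M_k}$, exactly the effective conductance $\mathcal{C}_{M_k-1}(\mathsf{t})$ of a finite tree $\mathsf{t}$ of height $\geq M_k-1$, where $\mathsf{t}$ is obtained from the grafted subtrees and $\mathbf{u}_{M_k-1}$ plays the role of the root. After passing to the associated reduced tree $\mathsf{t}^{*(M_k-1)}$ (which does not change the conductance), the law of this reduced tree, conditionally on $M_k=m$, is that of the reduced tree of an ordinary (size-biased-spine) Galton--Watson configuration up to generation $m-1$. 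The key point is that Lemma~\ref{lem:Lr-gw-condct} gives, for every $r\in(0,\alpha)$, a constant $K$ with $\E[(n\,\mathcal{C}_n(\mathsf{T}^{*n}))^r]\leq K$ uniformly in $n$, and an analogous bound holds for the spine-rooted reduced trees appearing here (one may either invoke the same lemma after comparing with an ordinary reduced tree, or re-run its proof, which only uses the recursive structure of the conductance and the tail of $\rho$). Hence there is $K'<\infty$ with
\begin{displaymath}
\E\big[(M_k h_k)^r \,\big|\, M_k\big] \;=\; \E\big[\big((M_k-1)\,\mathcal{C}_{M_k-1}(\mathsf{t}^{*(M_k-1)})\big)^r \big(\tfrac{M_k}{M_k-1}\big)^r\,\big|\,M_k\big] \;\leq\; 2^r K'
\end{displaymath}
for all $k\geq 1$, say (the factor $M_k/(M_k-1)\leq 2$ once $M_k\geq 2$, and $M_k\geq 1$ always, with trivial control of the boundary case).

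\medskip
\noindent\textbf{Step 2: remove the conditioning.} Taking expectations in the bound from Step~1 immediately yields $\sup_{k\geq 1}\E[(M_k h_k)^r]\leq 2^r K'<\infty$, since the bound is deterministic in $M_k$. In fact no tail estimate on $M_k$ is even needed, because the conditional bound is uniform; this is the decisive simplification over the finite-variance case, where one argues through an explicit second moment. One should be slightly careful that $\mathcal{C}_{M_k-1}$ refers to the conductance of the \emph{truncated} tree $[\widecheck{\mathsf{T}}]^{M_k-1}$, but truncating a tree only decreases its conductance, so the bound of Lemma~\ref{lem:Lr-gw-condct} (stated for the full reduced tree up to the relevant generation) still applies after the obvious monotonicity remark; alternatively the grafted subtrees conditioned to reach generation $0$ are precisely $\rho$-Galton--Watson trees conditioned on height $\geq M_k-1$, so $\mathsf{t}^{*(M_k-1)}$ is literally a reduced tree of the type covered by Lemma~\ref{lem:Lr-gw-condct}.

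\medskip
\noindent\textbf{Main obstacle.} The only genuinely delicate point is justifying that the conditional law of the reduced tree rooted at $\mathbf{u}_{M_k-1}$ (inside the grafted structure of $\widecheck{\mathsf{T}}$) falls under the scope of Lemma~\ref{lem:Lr-gw-condct}: one must check that conditioning on $I_{M_k}\geq 1$ and on $M_k=m$ turns the $\widehat N_m-1$ grafted subtrees into independent $\rho$-Galton--Watson trees conditioned to have height $\geq m-1$, and that the resulting effective conductance between $\mathbf{u}_{M_k-1}$ and generation $0$ is dominated (for the purpose of the $r$-th moment) by the conductance of a single reduced tree $\mathsf{T}^{*(m-1)}$ up to a bounded factor. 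This is exactly the kind of bookkeeping carried out in Section~3.4 of~\cite{LIN2} and in the proof of Lemma~\ref{lemma:5rv-cv-in-law}, so I would invoke those computations rather than redo them, and simply remark that the same reasoning, combined with Lemma~\ref{lem:Lr-gw-condct}, gives the uniform $L^r$ bound.
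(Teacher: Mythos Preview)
Your approach has a genuine gap: Lemma~\ref{lem:Lr-gw-condct} is a statement about the conductance of the \emph{ordinary} reduced Galton--Watson tree $\mathsf{T}^{*n}$, whereas $h_k=\mathcal{C}_{M_k-1}([\widecheck{\mathsf{T}}]^{M_k-1})$ is the conductance of the tree above $\mathbf{u}_{M_k-1}$ in the \emph{size-biased} construction. These trees have different laws: $[\widecheck{\mathsf{T}}]^{n}$ is distributed as $[\widehat{\mathsf{T}}]^{n}$, the size-biased tree truncated at the spine vertex, not as a conditioned $\rho$-Galton--Watson tree. Your claim that ``an analogous bound holds for the spine-rooted reduced trees'' is precisely the content of the lemma you are trying to prove, so invoking it is circular. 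Likewise, the computations you cite from Section~3.4 of~\cite{LIN2} and the proof of Lemma~\ref{lemma:5rv-cv-in-law} establish convergence in distribution, not uniform moment control. There is also a second issue: conditioning on $M_k=m$ imposes constraints on which grafted subtrees at $\mathbf{u}_1,\ldots,\mathbf{u}_{m-1}$ reach generation~$0$, so the conditional law of the tree above $\mathbf{u}_{m-1}$ is not simply that of an unconditioned size-biased tree of height $m-1$.

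The paper's argument avoids both problems by a recursive decomposition. Between $\mathbf{u}_{M_k-1}$ and $\mathbf{u}_{M_{k-1}}$ only the spine contributes to the conductance towards generation~$0$ (the intermediate grafted trees do not reach that generation, by definition of $M_{k-1},M_k$), so $h_k$ can be expressed through $L_k$, $c_{k-1}$ and $h_{k-1}$ via a series--parallel formula. Applying the elementary inequality $(a+b)^r\leq C(\eta)a^r+(1+\eta)b^r$ and using Lemma~\ref{lem:Lr-gw-condct} only for the $c_{k-1}$ part (which \emph{is} built from ordinary conditioned Galton--Watson trees), one obtains $\E[(M_kh_k)^r]\leq C+\xi\,\E[(M_{k-1}h_{k-1})^r]$ with $\xi<1$, and the uniform bound follows by iteration. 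The point is that Lemma~\ref{lem:Lr-gw-condct} is used as input for the grafted pieces, while the bound for the size-biased conductance $h_k$ is built inductively; you cannot short-circuit this by applying the lemma to $h_k$ directly.
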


\begin{proof}
We fix $r\in (0,\al)$. For any $\eta>0$, there exists a positive constant $C(\eta)$ sufficiently large so that $(a+b)^r\leq C(\eta)a^r+(1+\eta)b^r$ for every $a,b>0$. 
Using this inequality together with Lemma~\ref{lem:Lr-gw-condct}, we can easily adapt the proof of~\cite[Lemma 18]{LIN2} to show the existence of positive constants $C<\infty$ and $\xi<1$, both independent of $k$, such that for all $k\geq 2$,
$$\E\big[(M_kh_k)^r\big] \leq C+\xi \,\E\big[(M_{k-1}h_{k-1})^r\big].$$
The uniform boundedness of the sequence $(\E[(M_kh_k)^r])_{k\geq 1}$ thus follows. 
\end{proof}

Keeping the same notation as in Lemma \ref{lemma:5rv-cv-in-law}, we set
\begin{displaymath}
Q_{\infty}\colonequals \log \Bigg( 1+ \frac{\cc\a_2+\cdots+\cc\a_{\widehat N_\al}+\frac{1}{\mathcal{R}_\al}}{\frac{1+\mathcal{R}'_\al}{\mathcal{R}'_\al}}- \frac{1}{1+\mathcal{R}'_\al \big(\widehat{\mathcal{C}}\a+ \widetilde \cc\a_2+\cdots+\widetilde \cc\a_{\widehat N'_\al} \big)} \Bigg).
\end{displaymath}
Recall that 
\begin{displaymath}
Q_k=\log \bigg(1+ \frac{M_kc_k+\frac{M_k}{L_{k+1}}}{\frac{M_k}{L_k}}- \frac{\frac{M_k}{L_k}}{\frac{M_k}{L_k}+\frac{M_k}{M_{k-1}}(M_{k-1}h_{k-1}+M_{k-1}c_{k-1})}\bigg).
\end{displaymath}
The next result is the analog of Lemma 19 in~\cite{LIN2}.

\begin{lemma}
\label{lemma:auxiliary-L1}
\begin{enumerate}
\item[(i)] $Q_k$ converges to $Q_\infty$ in $L^1$ and in particular $\lim_{k\to \infty}\E[Q_k]= \E[Q_{\infty}]$.
\item[(ii)] We have the equality $\E[Q_{\infty}] = \frac{\al-1}{\al}\lal$.
\item[(iii)] It holds that
\begin{displaymath}
\sup_{i,j\geq 1} \E\big[|Q_iQ_j|\big]<\infty \quad \mbox{ and } \quad \sup_{i,j\geq 1} \E\big[(Q_iQ_j)^2\big]<\infty.
\end{displaymath}
Moreover, for any integer $n\geq 2$, we have 
\begin{equation}
\label{eq:L2n-unif-bdd}
\sup_{i\geq 1} \E\big[(Q_i)^{2n}\big]<\infty\,.
\end{equation}
\end{enumerate}
\end{lemma}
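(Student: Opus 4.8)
The plan is to combine the convergence in distribution of Lemma~\ref{lemma:5rv-cv-in-law} with suitable uniform integrability / moment bounds, exactly as in the proof of Lemma~19 in~\cite{LIN2}, but taking extra care of the fact that $\widehat N_\al$ has infinite mean when $\al<2$. First I would record the key algebraic rewriting: expressing $Q_k$ in terms of the five random variables $\big(\frac{L_{k+1}}{M_k},\frac{L_k}{M_{k-1}},M_kc_k,M_{k-1}c_{k-1},M_{k-1}h_{k-1}\big)$ and noting that $Q_\infty$ is obtained by replacing them with their joint limit $\big(\mathcal{R}_\al,\mathcal{R}'_\al,\sum\mathcal{C}\a,\sum'\mathcal{C}\a,\widehat{\mathcal{C}}\a\big)$. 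Since the map $(a,b,x,y,z)\mapsto \log\!\big(1+\frac{x+a}{(1+b)/b}-\frac{1}{1+b(z+y)}\big)$ is continuous on the relevant domain (bounded away from the singularities because $x,y,z\ge 1$ and $a,b>0$), the continuous mapping theorem gives $Q_k\xrightarrow{(\mathrm d)}Q_\infty$.

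For part~(i), upgrading this to $L^1$-convergence requires uniform integrability of $(Q_k)$. Here I would argue that $Q_k$ is dominated, up to constants, by $\log(M_kc_k)+\log(1+M_k/L_{k+1})+\log(M_{k-1}h_{k-1}+M_{k-1}c_{k-1}) + (\text{bounded terms})$; each of these has a uniformly bounded moment of some order $>1$. Indeed $M_{k-1}h_{k-1}$ has a uniformly bounded $r$-th moment for $r\in(0,\al)$ by Lemma~\ref{lemma:Lr-bdd-hk}; $M_kc_k$ is a sum over $I_k$ i.i.d.\ conductances, and combining Lemma~\ref{lemma:I_k-frac-moments} (bounded $r$-th moment of $I_k$ given $I_k\ge1$, $r<\al-1$) with Lemma~\ref{lem:Lr-gw-condct} one gets a uniformly bounded fractional moment of $M_kc_k$, hence of $\log^+(M_kc_k)$; and $M_k/L_{k+1}$ has a bounded fractional moment because $L_{k+1}/M_k$ converges to $\mathcal R_\al$ with enough regularity (or directly from \eqref{eq:epsilon-estimate}). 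So $\{Q_k^{1+\epsilon}\}$ is bounded for some $\epsilon>0$, which yields uniform integrability and $\E[Q_k]\to\E[Q_\infty]$.

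For part~(ii), I would identify $\E[Q_\infty]$ with $\frac{\al-1}{\al}\lal$ by an explicit computation. The recursion of Section~4.1 of~\cite{LIN2} shows $p_1=p_{k_n}\prod_{j=2}^{k_n} e^{Q_j}$, while the reduction carried out around \eqref{eq:1}--\eqref{eq:red-theorem} forces $\frac1{k_n}\sum_{j=2}^{k_n}Q_j\to\frac{\al-1}{\al}\lal$ by consistency with Theorem~\ref{thm:dim-discrete}; alternatively, and more directly, I would compute $\E[Q_\infty]$ using the distributional identities~\eqref{eq:rde-joint} and~\eqref{eq:c*-rde}: the term $\log\big(1+\frac{\sum\mathcal C\a + 1/\mathcal R_\al}{(1+\mathcal R'_\al)/\mathcal R'_\al}-\frac{1}{1+\mathcal R'_\al(\widehat{\mathcal C}\a+\sum'\mathcal C\a)}\big)$ is exactly the logarithm of the inverse of the one-step conductance ratio along the spine of $\widecheck{\mathsf T}$, and taking its expectation and telescoping reproduces $\E[\widehat{\mathcal C}\a]-1=\lal$ up to the factor $\frac{\al-1}{\al}$ coming from the spine density $\frac{\al}{\al-1}(1-x)^{1/(\al-1)}$; this is the same bookkeeping as in~\cite{LIN2} with $\theta_\al$ in place of the binary offspring law. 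For part~(iii) the bounds $\sup_{i,j}\E[|Q_iQ_j|]<\infty$ and $\sup_{i,j}\E[(Q_iQ_j)^2]<\infty$ follow from Cauchy--Schwarz once we know $\sup_i\E[Q_i^{2n}]<\infty$ for every $n\ge1$, i.e.\ \eqref{eq:L2n-unif-bdd}; and \eqref{eq:L2n-unif-bdd} is proved by the same domination as above, now using that \emph{all} moments of $\cc\a$, of $M_kh_k$, and of $I_k\,|\,I_k\ge1$-weighted sums of conductances of order $<\al$ control arbitrarily high moments of $\log(\cdot)$, since $\log^{2n}(1+x)\le C_{n,r} x^r$ for any $r>0$.

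The main obstacle I expect is the uniform control of the moments of $M_kc_k$ (and the corresponding logarithmic moments) when $\al<2$: $c_k$ is a random sum of $I_k$ conductances where $I_k$, conditioned to be positive, has only fractional moments below $\al-1$, so one cannot bound $\E[(M_kc_k)^r]$ for $r\ge\al-1$; the point is that $\log(M_kc_k)$ only needs a tiny fractional moment of $M_kc_k$, and $\E[(M_kc_k)^r]$ is uniformly bounded precisely in the range $r<\al-1$ where both Lemma~\ref{lemma:I_k-frac-moments} and Lemma~\ref{lem:Lr-gw-condct} (applied with the conditional law of the grafted subtrees given their height exceeds $M_k-1$) apply — this matching of ranges is the delicate step and is exactly where the infinite-variance case departs from~\cite{LIN2}.
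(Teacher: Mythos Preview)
Your plan is essentially the paper's approach, and parts~(i) and~(iii) are fine: the paper also dominates $|Q_k|$ by $\log(1+M_kc_k+M_k/L_{k+1})$ and $\log(1+M_{k-1}/(2L_k))$, uses $\log(1+x)\le A+x^r$ with $r<\al-1$, and invokes Lemmas~\ref{lem:Lr-gw-condct} and~\ref{lemma:I_k-frac-moments} to get $\sup_k\E[(M_kc_k)^r]<\infty$; note that $h_{k-1}$ and $c_{k-1}$ do not enter this bound (the subtracted term lies in $[0,1]$), so your inclusion of $\log(M_{k-1}h_{k-1}+M_{k-1}c_{k-1})$ is harmless but unnecessary.

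For part~(ii), your first suggestion---identifying $\E[Q_\infty]$ by ``consistency with Theorem~\ref{thm:dim-discrete}''---is circular: Theorem~\ref{thm:dim-discrete} is deduced from Proposition~\ref{prop:red-theorem}, which is deduced from Lemma~\ref{lem:sum-L2-conv}, which in turn rests on the present lemma. Only the direct computation is available. The paper carries it out concretely: set $V_\al=\mathcal R_\al/(1+\mathcal R_\al)$ and $V'_\al=\mathcal R'_\al/(1+\mathcal R'_\al)$ (both with density $\frac{\al}{\al-1}(1-x)^{1/(\al-1)}$), rewrite
\[
Q_\infty=\log V'_\al+\log\Big(\textstyle\sum\mathcal C\a+\tfrac{1}{\mathcal R_\al}+\big(V'_\al+\tfrac{1-V'_\al}{\widehat{\mathcal C}\a+\sum'\mathcal C\a}\big)^{-1}\Big),
\]
apply~\eqref{eq:c*-rde} to replace the last parenthesis by an independent copy of $\widehat{\mathcal C}\a$, then use $\tfrac{1}{\mathcal R_\al}=\tfrac{1-V_\al}{V_\al}$ to cancel $\E[\log V_\al]=\E[\log V'_\al]$, and finally apply~\eqref{eq:c*-rde} once more in the reverse direction to land on $\E\big[\log\frac{\widehat{\mathcal C}\a+\sum\mathcal C\a}{\widehat{\mathcal C}\a}\big]=\frac{\al-1}{\al}\lal$ via~\eqref{eq:lal-2}. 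Your description (``telescoping reproduces $\E[\widehat{\mathcal C}\a]-1$ up to the factor $\frac{\al-1}{\al}$'') gestures at this but does not name the two uses of~\eqref{eq:c*-rde} or the $V_\al$ substitution, which are the actual content of the argument.
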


\begin{proof}
(i) As in the proof of Lemma 19 in~\cite{LIN2}, we know that 
\begin{displaymath}
|Q_k|\leq \max \bigg\{\log \Big(1+M_kc_k+\frac{M_k}{L_{k+1}}\Big), \log\Big(1+\frac{M_{k-1}}{2L_k}\Big)\bigg\}.
\end{displaymath}
For any $r>0$, there exists a constant $A>0$ such that $\log(1+x)\leq A+x^r$ for every $x>0$.
It follows that
\begin{displaymath}
|Q_k|\leq A+\Big(M_kc_k+\frac{M_k}{L_{k+1}}\Big)^r+ \Big(\frac{M_{k-1}}{L_k}\Big)^r \leq A+(M_kc_k)^r+\Big(\frac{M_k}{L_{k+1}}\Big)^r+ \Big(\frac{M_{k-1}}{L_k}\Big)^r .
\end{displaymath}
Recall that conditionally on $M_k$ and $I_{M_k}$, the quantity $c_k$ is the sum of $I_{M_k}$ i.i.d.~conductances of reduced Galton--Watson trees having height $M_k$. By Lemma~\ref{lem:Lr-gw-condct}, the first moments $(\E[n\,\mathcal{C}_n(\mathsf{T}^{*n})])_{n\geq 1}$ are uniformly bounded by a finite constant $K>1$. If $r<1$, we apply Jensen's inequality to see that
\begin{displaymath}
\E\big[(M_kc_k)^r\big]= \E\big[\E[(M_kc_k)^r\!\mid\! M_k, I_{M_k}]\big]\leq \E\big[\E[M_kc_k\!\mid\! M_k, I_{M_k}]^r\big]\leq K\,\E\big[(I_{M_k})^r\big].
\end{displaymath}
Then Lemma \ref{lemma:I_k-frac-moments} implies that for any $r\in (0,\alpha-1)$,
\begin{displaymath}
\sup_{k\geq 1} \,\E\big[(M_kc_k)^r\big]<\infty.
\end{displaymath}
As in the proof of assertion (i) in~\cite[Lemma 19]{LIN2}, one can also show that for any $r\in(0,\alpha-1)$, 
\begin{displaymath}
\sup_{k\geq 1} \,\E\Big[\Big(\frac{M_k}{L_{k+1}}\Big)^r\Big]<\infty.
\end{displaymath}
Hence, $(Q_k)_{k\geq 2}$ is bounded in $L^p$ for some $p>1$. But owing to Lemma~\ref{lemma:5rv-cv-in-law}, $Q_k$ converges in distribution to $Q_\infty$. Therefore, the uniform integrability of $(Q_k)_{k\geq 2}$ gives the convergence in~$L^1$.

(ii) For the calculation of $\E[Q_{\infty}]$, we set 
$$ V_\al \colonequals \frac{\mathcal{R}_\al}{1+\mathcal{R}_\al}\quad \mbox{ and }\quad V'_\al \colonequals \frac{\mathcal{R}'_\al}{1+\mathcal{R}'_\al}.$$ 
They are independent with the same law of density $\frac{\al}{\al-1}(1-x)^{\frac{1}{\al-1}}$ on $[0,1]$. 
Notice that
\begin{eqnarray*}
Q_\infty &=& \log \Bigg( V'_\al\Big(\cc\a_2+\cdots+\cc\a_{\widehat N_\al}+\frac{1}{\mathcal{R}_\al} \Big) +\frac{V'_\al\big(\widehat{\mathcal{C}}\a+ \widetilde \cc\a_2+\cdots+\widetilde \cc\a_{\widehat N'_\al}\big)}{1-V'_\al+V'_\al \big(\widehat{\mathcal{C}}\a+ \widetilde \cc\a_2+\cdots+\widetilde \cc\a_{\widehat N'_\al}\big)}\Bigg)\\
&=& \log (V'_\al) +\log\Bigg( \cc\a_2+\cdots+\cc\a_{\widehat N_\al}+\frac{1}{\mathcal{R}_\al}  +\bigg(V'_\al+\frac{1-V'_\al}{\widehat{\mathcal{C}}\a+ \widetilde \cc\a_2+\cdots+\widetilde\cc\a_{\widehat N'_\al}}\bigg)^{-1}\Bigg),
\end{eqnarray*}
and hence
$$ \E[Q_\infty]=\E[\log (V'_\al)] +\E\Bigg[\log\Bigg( \cc\a_2+\cdots+\cc\a_{\widehat N_\al}+\frac{1}{\mathcal{R}_\al}  +\bigg(V'_\al+\frac{1-V'_\al}{\widehat{\mathcal{C}}\a+ \widetilde \cc\a_2+\cdots+\widetilde\cc\a_{\widehat N'_\al}}\bigg)^{-1}\Bigg)\Bigg]. $$
Using (\ref{eq:c*-rde}) to rewrite the second term in the right-hand side, we obtain 
$$ \E[Q_\infty]=\E[\log (V'_\al)] +\E\bigg[\log\Big( \cc\a_2+\cdots+\cc\a_{\widehat N_\al}+\frac{1}{\mathcal{R}_\al}  +\widehat{\mathcal{C}}\a \Big)\bigg]. $$
However, the relation $\mathcal{R}_\al=\frac{V_\al}{1-V_\al}$ gives 
\begin{displaymath}
\E\bigg[\log\Big( \cc\a_2+\cdots+\cc\a_{\widehat N_\al}+\frac{1}{\mathcal{R}_\al}  +\widehat{\mathcal{C}}\a \Big)\bigg]=\E\Big[\log\Big( 1-V_\al+V_\al\big(\cc\a_2+\cdots+\cc\a_{\widehat N_\al}+\widehat{\mathcal{C}}\a \big)\Big)\Big]-\E[\log(V_\al)].
\end{displaymath}
As $\E[\log(V_\al)]=\E[\log (V'_\al)]> -\infty$, we deduce that 
$$ \E[Q_\infty]= \E\Big[\log\Big( 1-V_\al+V_\al\big(\cc\a_2+\cdots+\cc\a_{\widehat N_\al}+\widehat{\mathcal{C}}\a \big)\Big)\Big].$$
Now we apply (\ref{eq:c*-rde}) again to see that $\log(\widehat{\mathcal{C}}\a)$ has the same distribution as 
$$\log \Big(\widehat{\mathcal{C}}\a +\cc\a_2+\cdots+\cc\a_{\widehat N_\al} \Big)-\log\Big(1-V_\al+V_\al\big(\cc\a_2+\cdots+\cc\a_{\widehat N_\al}+\widehat{\mathcal{C}}\a \big)\Big).$$
Accordingly, we conclude by \eqref{eq:lal-2} that 
$$\E[Q_\infty]= \E\Bigg[\log\frac{\widehat{\mathcal{C}}\a +\cc\a_2+\cdots+\cc\a_{\widehat N_\al} }{\widehat{\mathcal{C}}\a}\Bigg]= \frac{\al-1}{\al}\lal\,.$$

(iii) The last assertion can be analogously shown as assertion (iii) in~\cite[Lemma~19]{LIN2}, by adapting the arguments there in the same way as we have done for assertion (i). Concerning the supplementary result \eqref{eq:L2n-unif-bdd}, it can be similarly treated since one can adjust the power exponent $r$ in the inequality $\log(1+x)\leq A+x^r$ to be arbitrarily small. We therefore omit the details. 
\end{proof}

We are now ready to finish the proof of Lemma~\ref{lem:sum-L2-conv}.
Assertions (i) and (ii) of Lemma~\ref{lemma:auxiliary-L1} give 
\begin{equation*}
\lim_{k\to \infty}\E\bigg[\frac{1}{k}\sum\limits_{j=2}^k Q_j\bigg]= \lim_{k\to \infty}\frac{1}{k}\sum\limits_{j=2}^k \E[Q_j] =\E[Q_{\infty}]=\frac{\al-1}{\al}\lal.
\end{equation*}
With all the ingredients prepared above, we can prove the inequality 
\begin{equation*}
\limsup_{k\to \infty} \E\bigg[\Big(\frac{1}{k}\sum\limits_{j=2}^k Q_j\Big)^2\bigg]\leq \big(\E[Q_{\infty}]\big)^2
\end{equation*}
in a similar manner as Lemma~20 in~\cite{LIN2}. 
In the corresponding arguments, notice that for controlling the error of approximation, although Lemma~18 in~\cite{LIN2} is now replaced by its weaker analog Lemma~\ref{lemma:Lr-bdd-hk}, we are able to compensate by \eqref{eq:L2n-unif-bdd} to apply the H\"older inequality as required.  
Finally, the desired $L^2$-convergence follows from 
\begin{displaymath}
\limsup_{k\to\infty} \E\bigg[\Big(\frac{1}{k}\sum\limits_{j=2}^k Q_j - \E[Q_{\infty}] \Big)^2\bigg] \!\leq \limsup_{k\to\infty} \E\bigg[\Big(\frac{1}{k}\sum\limits_{j=2}^k Q_j\Big)^2\bigg]-2 \E[Q_{\infty}] \lim_{k\to\infty}\E\bigg[\frac{1}{k}\sum\limits_{j=2}^k Q_j\bigg]+ \E[Q_{\infty}]^2 \leq 0.
\end{displaymath}

\end{document}